\theoremstyle{plain}
\newtheorem{theorem}{Theorem}
\newtheorem{lemma}[theorem]{Lemma}
\newtheorem{proposition}[theorem]{Proposition}
\newtheorem{corollary}[theorem]{Corollary}
\newtheorem{conjecture}[theorem]{Conjecture}
\theoremstyle{definition}
\newtheorem{remark}[theorem]{Remark}
\newtheorem{definition}[theorem]{Definition}
 \numberwithin{equation}{section}
 \numberwithin{theorem}{section}
 \newcommand\ipic[1]		{\raisebox{-0.5\height}{\scalebox{.45}{\includegraphics{pic#1.eps}}}}
\newcommand{\bth}{\begin{theorem}}
\renewcommand{\eth}{\end{theorem}}
\newcommand{\bpr}{\begin{proposition}}
\newcommand{\epr}{\end{proposition}}
\newcommand{\ble}{\begin{lemma}}
\newcommand{\ele}{\end{lemma}}
\newcommand{\bco}{\begin{corollary}}
\newcommand{\eco}{\end{corollary}}
\newcommand{\bde}{\begin{definition}}
\newcommand{\ede}{\end{definition}}
\newcommand{\bex}{\begin{example}}
\newcommand{\eex}{\end{example}}
\newcommand{\bre}{\begin{remark}}
\newcommand{\ere}{\end{remark}}
\DeclareMathOperator{\End}{End}
\DeclareMathOperator{\Hom}{Hom}
\DeclareMathOperator{\Rep}{Rep}
\DeclareMathOperator{\coev}{coev}
\newcommand\be            {\begin{equation}}
\newcommand\ee            {\end{equation}}
\newcommand{\ot}{\otimes}
\newcommand{\lb}{\label}
\newcommand{\VOA}{\mathbb{V}}
\newcommand\vect{\mathcal{V}\hspace{-.5pt}ect}
\newcommand\svect{\mathcal{S}\mathcal{V}\hspace{-.5pt}ect}
\newcommand\SF{\mathcal{S}\hspace{-.65pt}\mathcal{F}}
\newcommand\eps           {\varepsilon}
\newcommand\id            {id}
\newcommand\one           {{\bf1}}
\newcommand\Cb            {\mathbb{C}}
\newcommand\Rb            {\mathbb{R}}
\newcommand\Zb            {\mathbb{Z}}
\newcommand\Vc            {\mathcal{V}}
\newcommand\h            {\mathfrak{h}}
\newcommand\f	{\mathfrak{f}}
\newcommand{\Lmul}{\hspace{2.5pt}\begin{picture}(-1,1)(-1,-3)\circle*{3}\end{picture}\hspace{5.5pt}}
\newcommand\void[1]	{}
\newcommand{\Vect}{{\cal V}{\it ect}}
\newcommand{\sVect}{{\it s}{\cal V}{\it ect}}
\newcommand{\C}{{\cal C}}
\newcommand{\bpf}{\begin{proof}}
\newcommand{\epf}{\end{proof}}
\begin{document}

\thispagestyle{empty}
\def\thefootnote{\fnsymbol{footnote}}
\begin{flushright}
ZMP-HH/16-2\\
Hamburger Beitr\"age zur Mathematik 580
\end{flushright}
\vskip 3em
\begin{center}\LARGE
Holomorphic Symplectic Fermions
\end{center}

\vskip 2em
\begin{center}
{\large 
Alexei Davydov$^{a}$,\, Ingo Runkel$^{b}$,\, ~\footnote{Emails: {\tt davydov@ohio.edu}, {\tt ingo.runkel@uni-hamburg.de}}}
\\[1em]
\it$^a$ 
Department of Mathematics, Ohio University, \\ Athens, OH 45701, USA
\\[1em]
$^b$ Fachbereich Mathematik, Universit\"at Hamburg\\
Bundesstra\ss e 55, 20146 Hamburg, Germany
\end{center}

\vskip 2em
\begin{center}
  January 2016
\end{center}
\vskip 2em

\begin{abstract}
Let $V$ be the even part of the vertex operator super-algebra of $r$ pairs of symplectic fermions. Up to two conjectures, we show that $V$ admits a unique holomorphic extension if $r$ is a multiple of $8$, and no holomorphic extension otherwise.

This is implied by two results obtained in this paper: 
1) If $r$ is a multiple of 8, one possible holomorphic extension is given by the lattice vertex operator algebra for the even self dual lattice $D_{r}^+$ with shifted stress tensor. 2) We classify  Lagrangian algebras in $\SF(\h)$, a ribbon category associated to symplectic fermions. 

The classification of holomorphic extensions of $V$ follows from 1) and 2) if one assumes that $\SF(\h)$ is ribbon equivalent to $\mathrm{Rep}(V)$, and that simple modules of extensions of $V$ are in one-to-one relation with simple local modules of the corresponding commutative algebra in $\SF(\h)$.
\end{abstract}

\setcounter{footnote}{0}
\def\thefootnote{\arabic{footnote}}

\newpage

\tableofcontents

\section{Introduction and summary}

The chiral conformal field theory of symplectic fermions was introduced in \cite{Kausch:1995py}. It was realised in \cite{Gaberdiel:1998ps} that despite the appearance of logarithms in the conformal blocks, one can construct a full (that is, non-chiral) conformal field theory from symplectic fermions. By now, symplectic fermions are the best studied example of a (chiral and non-chiral) logarithmic conformal field theory, see e.g.\ \cite{
	Gaberdiel:1996np,
	Fuchs:2003yu,
	Abe:2005,
	Gaberdiel:2006pp,
	Nagatomo:2009xp,
	Abe:2011ab,
	Gainutdinov:2011ab,
	Creutzig:2011cu,
	Tsuchiya:2012ru,
	Gainutdinov:2015lja} and references therein.

To define symplectic fermions, one starts from a finite-dimensional symplectic vector space $\h$ over $\Cb$. Up to isomorphism, $\h$ is characterised by its dimension $d \in 2\Zb_{>0}$. From $\h$ one constructs the affine Lie super-algebra $\widehat \h$ of purely odd free super-bosons (Section \ref{sec:SF-VOSA}). 
The vacuum $\widehat\h$-module, that is, the highest weight $\widehat\h$-module of highest weight $0$, carries the structure of a vertex operator super-algebra $\VOA(d)$ (VOSA) of central charge $-d$ \cite{Abe:2005}. This is the symplectic fermion VOSA. Its even subspace $\VOA(d)_\mathrm{ev}$ is
a vertex operator algebra (VOA). 

The characters of the irreducible representations of $\VOA(d)_\mathrm{ev}$ and their modular properties are known \cite{Kausch:1995py,Gaberdiel:1996np,Abe:2005}. Independently of $d$, there are four irreducible representations. Let us denote their characters by $\chi_i(\tau)$, $i=1,\dots,4$, so that $\chi_1(\tau)$ is the character of $\VOA(d)_\mathrm{ev}$, see Section \ref{sec:char-mod-inv} for details. One can ask if there are non-zero linear combinations $Z(\tau) = \sum_{i=1}^4 z_i \,\chi_i(\tau)$ with $z_i \in \Zb_{\ge 0}$ such that $Z(\tau)$ is modular invariant or at least projectively modular invariant. By the latter we mean that
$Z(-\frac{1}{\tau}) = \xi\,Z(\tau)$ and $Z(\tau+1) = \zeta \, Z(\tau)$ for some $\xi,\zeta \in \mathbb{C}^\times$.
An easy computation shows (Proposition \ref{prop:(projectively)modinv}) that such $Z(\tau)$ exist if and only if $d \in 16\Zb_{>0}$, in which case
\be\label{eq:all-mod-inv-intro}
	(z_1,z_2,z_3,z_4) \in \Zb_{>0} \, (2^{\frac{d}2-1} , 2^{\frac{d}2-1} , 1 , 0 ) \ .
\ee
The resulting projective modular invariant $Z(\tau)$ is modular invariant iff $d \in 48 \Zb_{>0}$. 

A {\em holomorphic VOA} is a VOA $\VOA$ all of whose simple modules are isomorphic to $\VOA$. If $\VOA$ has no negative conformal weights\footnote{This condition actually does not hold in the case we are interested in. However, one can find a modified stress tensor with respect to which all conformal weights are non-negative.\label{fn:non-neg-wt}} and is $C_2$-cofinite, being holomorphic implies that the character of $\VOA$ is modular invariant \cite{Zhu1996,Miyamoto:2002ar}. 

By a {\em holomorphic extension} of a VOA $\mathbb{U}$ we mean a conformal embedding of $\mathbb{U}$ into holomorphic VOA $\VOA$. The above observation on characters hence raises the following two questions:
\begin{itemize}
\item[Q1)] Is the minimal solution in \eqref{eq:all-mod-inv-intro}, that is, $Z_\mathrm{min}(\tau) =  2^{\frac{d}2-1} \big\{ \chi_1(\tau) + \chi_2(\tau) \big\} + \chi_3(\tau)$, the character of a holomorphic extension of $\VOA(d)_\mathrm{ev}$?
\item[Q2)] What are all holomorphic extensions of $\VOA(d)_\mathrm{ev}$?
\end{itemize}
Question 1 turns out to be easy to answer in the affirmative if one combines two observations. Firstly, $Z_\mathrm{min}(\tau)$ is the character of the lattice VOA $\VOA_{D_{d/2}^+}$ of the even self-dual lattice $D_{d/2}^+$ (Section \ref{sec:char-mod-inv}). Secondly, the VOSA $\VOA(d)$ is a sub-VOSA of the lattice VOSA $\VOA_{\Zb^{d/2}}$ of the lattice $\Zb^{d/2}$ with its standard inner product (see Theorem \ref{thm:symp-ferm-in-lattice} -- this holds for all $d \in \Zb_{>0}$). However, $\VOA_{\Zb^{d/2}}$ does not have the standard stress tensor (aka.\ Virasoro element or conformal vector), but instead its stress tensor is
\be\label{eq:T-FF_for_Zd2}
	T^\mathrm{FF} = \tfrac12 \sum_{i=1}^{d/2} \big( H^i_{-1}H^i_{-1} -H^i_{-2} \big) \one
\ee
(`FF' for Feigin-Fuchs) with central charge $-d$, where $H^i_m$, $i=1,\dots,d/2$, $m\in\Zb$ are the generators of the $d/2$ copies of the Heisenberg algebra (Section \ref{sec:SF-VOSA}).
Free boson theories with modified stress tensor as above appear in the conformal field theory literature as ``free boson with background charge'' or ``Feigin-Fuchs free boson'', see e.g.\ \cite{DMS}.
The construction also appears in the context of VOAs, see e.g. \cite{Kac:1998}. A detailed investigation of lattice VOAs with such a shifted stress tensor can be found in \cite{Dong:2006}. 

Combining the two observations above gives our first main result (Corollary \ref{cor:symp-even-in-lattice}):

\begin{theorem}\label{thm:intro-main-1}
For $d \in 16 \Zb_{>0}$, $\VOA(d)_\mathrm{ev}$ has a holomorphic extension, namely it is a sub-VOA of the holomorphic lattice VOA $\VOA_{D_{d/2}^+}$ with stress tensor \eqref{eq:T-FF_for_Zd2}.
\end{theorem}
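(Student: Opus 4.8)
The plan is to obtain the statement as a direct consequence of Theorem~\ref{thm:symp-ferm-in-lattice}, combined with the structure of the lattice $D_{d/2}^+$ and the character computation recorded above. Write $n=d/2$ and recall that $D_n^+=D_n\cup(D_n+s)$ with glue vector $s=(\tfrac12,\dots,\tfrac12)$; this is a unimodular lattice for every even $n$, and it is \emph{even} precisely when $s\cdot s=n/4\in 2\Zb$, i.e.\ when $n\in 8\Zb$, equivalently $d\in 16\Zb$. Thus for the values of $d$ in the hypothesis, $D_{d/2}^+$ is even and self-dual, so the lattice VOA $\VOA_{D_{d/2}^+}$ is holomorphic: the simple modules of a lattice VOA $\VOA_L$ are indexed by $L^\ast/L$, which here is trivial, leaving $\VOA_{D_{d/2}^+}$ itself as the only simple module.

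The first step is to pass to even parts in the conformal embedding $\VOA(d)\subseteq\VOA_{\Zb^{d/2}}$ provided by Theorem~\ref{thm:symp-ferm-in-lattice}. Since $\Zb^{d/2}$ is an odd lattice, $\VOA_{\Zb^{d/2}}$ is a genuine super-VOA whose parity is norm parity, and as $\langle\beta,\beta\rangle=\sum_i\beta_i^2\equiv\sum_i\beta_i\pmod 2$ this coincides with the coordinate-sum parity; under the boson--fermion correspondence it matches the fermion parity of $\VOA(d)$. Hence $(\VOA_{\Zb^{d/2}})_\mathrm{ev}=\VOA_{D_{d/2}}$, the sub-VOA supported on the even sublattice $D_{d/2}=\{\beta:\sum_i\beta_i\in 2\Zb\}$, and restricting the embedding to even parts yields $\VOA(d)_\mathrm{ev}\subseteq\VOA_{D_{d/2}}$. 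As $D_{d/2}\subseteq D_{d/2}^+$ is a sublattice, this gives the chain of sub-VOAs $\VOA(d)_\mathrm{ev}\subseteq\VOA_{D_{d/2}}\subseteq\VOA_{D_{d/2}^+}$.

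It then remains to check that this inclusion is a conformal embedding into a holomorphic VOA once the stress tensor~\eqref{eq:T-FF_for_Zd2} is used throughout. The element $T^\mathrm{FF}$ lies in the Heisenberg subalgebra common to all three VOAs in the chain, so the conformal vectors agree and the embedding is conformal; in lattice-VOA terms $T^\mathrm{FF}$ is the background-charge modification associated with the vector $-s$, which lies in $(D_{d/2}^+)^\ast=D_{d/2}^+$, so that the shifted grading is well defined, and which produces central charge $-d$ in agreement with $\VOA(d)$. Passing from the standard stress tensor to $T^\mathrm{FF}$ does not alter the underlying vertex algebra, and in particular not the indexing of simple modules by $L^\ast/L$; hence $\VOA_{D_{d/2}^+}$ with stress tensor~\eqref{eq:T-FF_for_Zd2} is still holomorphic (cf.\ \cite{Dong:2006}). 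A conformal embedding into a holomorphic VOA is by definition a holomorphic extension, which proves the claim; as a consistency check, the character of $\VOA_{D_{d/2}^+}$ in the shifted grading equals $Z_\mathrm{min}(\tau)$.

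The step I expect to demand the most care is the even-part identification: one must verify that the $\Zb_2$ fermion-parity automorphism of $\VOA(d)$ corresponds, under the isomorphism of Theorem~\ref{thm:symp-ferm-in-lattice}, to the coordinate-sum parity on $\VOA_{\Zb^{d/2}}$, keeping track of the cocycle and sign conventions in the vertex operators $e^\beta$, and that the restriction of $T^\mathrm{FF}$ recovers exactly the symplectic-fermion conformal structure. By contrast, the persistence of holomorphicity under the stress-tensor shift is comparatively routine given \cite{Dong:2006}, since it is governed by the self-duality of the lattice rather than by the choice of conformal vector.
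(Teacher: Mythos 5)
Your proposal is correct and follows essentially the same route as the paper: it invokes Theorem \ref{thm:symp-ferm-in-lattice} for the embedding $\VOA(d)\hookrightarrow\VOA_{\Zb^{d/2}}$, restricts to even parts to land in $\VOA_{D_{d/2}}\subseteq\VOA_{D_{d/2}^+}$, and uses that $D_{d/2}^+$ is even self-dual for $d\in16\Zb$ so that the lattice VOA is holomorphic (Corollaries \ref{cor:symp-ferm-in-Dr-lattice} and \ref{cor:symp-even-in-lattice}). The extra details you supply on the evenness criterion $s\cdot s\in2\Zb$ and on the persistence of holomorphicity under the stress-tensor shift are points the paper handles by citation rather than argument, but they do not change the approach.
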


The character of $\VOA_{D_{d/2}^+}$ is precisely $Z_\mathrm{min}(\tau)$, and so this theorem answers question Q1.
Question Q2 is more involved, and we can only give an answer under an additional assumption and a conjectural equivalence of categories. Let us present this result in more detail. 

\medskip

In addition to the affine Lie super-algebra $\widehat\h$, we will need a twisted version thereof, $\widehat\h_{\mathrm{tw}}$. The category of $\widehat\h$ and $\widehat\h_{\mathrm{tw}}$ representations, subject to certain finiteness properties, is equivalent to
\be
	\SF(\h) \,:=\, \Rep^\mathrm{fd}(\h) \oplus  \svect^\mathrm{fd}
\ee
as a $\Cb$-linear category (see \cite{Runkel:2012cf} and Section \ref{sec:brmon-on-rep}). Here, $\svect^\mathrm{fd}$ is the category of finite-dimensional super-vector spaces, and $\Rep^\mathrm{fd}(\h)$ is the category of representations of $\h$ -- thought of as an abelian Lie super-algebra -- in $\svect^\mathrm{fd}$. 
We remark that $\Rep^\mathrm{fd}(\h)$ is not semi-simple.

For an object $A \in \SF(\h)$ we will denote by 
\be
\widehat{A}
\ee 
the corresponding direct sum of an $\widehat\h$ and an $\widehat\h_{\mathrm{tw}}$ representation, and by $(\widehat A)_\mathrm{ev}$ its parity-even subspace. In \cite{Runkel:2012cf}, the monodromy and asymptotic properties of conformal blocks were used to endow $\SF(\h)$ with the structure of a braided tensor category. Altogether, 
the category $\SF(\h)$
\begin{itemize} \itemsep -.1em
\item[-] is $\Cb$-linear and abelian, 
\item[-] has four simple objects (independently of $\h$), but is not semi-simple (unless $\h=0$, which we exclude),
\item[-] is finite (all Hom-spaces are finite-dimensional and all objects have finite length), and 
\item[-] is equipped with a ribbon structure.
\end{itemize}
The associator, braiding and ribbon twist of $\SF(\h)$ depend on an additional parameter $\beta$ subject to $\beta^4 = (-1)^{d/2}$, which is omitted from the notation. In the case of the symplectic fermions, $\beta$ takes the value 
\be\label{eq:beta-sympferm-value}
	\beta = e^{- \pi i d/8} \ .
\ee	

A {\em Lagrangian algebra} in an abelian ribbon category is a commutative associative unital algebra with trivial ribbon twist, whose only local modules are the algebra itself or direct sums thereof \cite{Frohlich:2003hm,dmno}.
The definition of local modules is recalled in Section \ref{sec:commalg} and that of Lagrangian algebras in Section \ref{sec:locmod}.

The second main result of this paper is
 (Theorem \ref{thm:Lag-in-SF}):

\begin{theorem}\label{thm:intro-main-2}
\begin{enumerate}
\item $\SF(\h)$ contains Lagrangian algebras iff $\beta=1$.
\item For $\beta=1$, Lagrangian algebras are determined up to isomorphism of algebras by a Lagrangian subspace $\f\subset\h$ -- denote this algebra by $H(\f)$.
\item For any two Lagrangian subspaces $\f,\f' \subset \h$ there is a 
	$\Cb$-linear
ribbon autoequivalence $J$ of $\SF(\h)$ such that $J(H(\f)) \cong H(\f')$ as algebras.
\end{enumerate}
\end{theorem}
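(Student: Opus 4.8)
The plan is to leverage the explicit description $\SF(\h)=\Rep^{\mathrm{fd}}(\h)\op\svect^{\mathrm{fd}}$ together with its braided ribbon structure, which is built entirely from the symplectic form $\omega$ on $\h$ and the parameter $\beta$. Since $\Rep^{\mathrm{fd}}(\h)$ is the category of finite-dimensional super-modules over the enveloping algebra $\Lambda\h$, the four simple objects are the even and odd lines $\one,\Pi\one$ in the untwisted summand and the even and odd generators $T,\Pi T$ of the twisted summand $\svect^{\mathrm{fd}}$. First I would record the ribbon twist on these building blocks: on the untwisted sector it is $\beta$-independent, equal to the identity up to a nilpotent (logarithmic) contribution on the projective object $\Lambda\h$, while on the twisted sector $\svect^{\mathrm{fd}}$ it acts by a scalar proportional to $\beta$. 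Any commutative algebra $A$ then splits as $A\cong A_{0}\op A_{1}$ with $A_{0}\in\Rep^{\mathrm{fd}}(\h)$ and $A_{1}\in\svect^{\mathrm{fd}}$, and the multiplication is graded for this splitting; this reduces the entire problem to understanding commutative algebras in $\Rep^{\mathrm{fd}}(\h)$ and the way a twisted part can be glued on.

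For part (1), I would argue that a Lagrangian algebra must satisfy $A_{1}\neq 0$: an algebra supported on the untwisted sector alone cannot be Lagrangian, since it must realise all of $\SF(\h)$ through its local modules (equivalently, have the correct quantum dimension $\sqrt{\dim\SF(\h)}$), which a proper-subcategory support cannot achieve. Once $A_{1}\neq 0$, the trivial-twist requirement $\theta_{A}=\id_{A}$ forces $\theta$ to act as the identity on $A_{1}$; as $\theta$ restricted to the twisted sector is a scalar proportional to $\beta$, this is possible only if $\beta=1$. Conversely, for $\beta=1$ the construction of part (2) exhibits an explicit Lagrangian algebra.

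For part (2), assume $\beta=1$. The untwisted part $A_{0}$ is a commutative algebra in $\Rep^{\mathrm{fd}}(\h)$, whose braiding is the super-flip corrected by a factor built from $\omega$. I would show that the connected such algebras are the exterior algebras $\Lambda\f$ attached to a subspace $\f\subset\h$ (with $\f=0$ recovering the unit $\one$), and that braided commutativity of the exterior product holds precisely when $\f$ is $\omega$-isotropic, the sign from the flip being compensated by the $\omega$-factor exactly on isotropic subspaces. The Lagrangian condition then forces $\f$ to be maximal isotropic, i.e.\ Lagrangian: for a strictly smaller isotropic subspace the algebra admits local modules beyond its own multiples. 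Reassuringly, for Lagrangian $\f$ the composition factors of $\Lambda\f$ reproduce the multiplicities $2^{d/2-1}$ of $\one$ and $\Pi\one$, matching $Z_{\mathrm{min}}$. Finally I would check that $A_{1}$ is forced to be a single copy of the even twisted simple and that the residual products $\Lambda\f\ot A_{1}\to A_{1}$ and $A_{1}\ot A_{1}\to\Lambda\f$ are determined up to isomorphism by $\f$, so that the algebra $H(\f)$ depends only on $\f$.

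For part (3), the symplectic group $\mathrm{Sp}(\h)$ acts on $\SF(\h)$: each $g\in\mathrm{Sp}(\h)$ preserves $\omega$, hence the associator, braiding and twist, giving a $\Cb$-linear ribbon autoequivalence $J_{g}$, and by construction $J_{g}(H(\f))\cong H(g\cdot\f)$ as algebras. Since $\mathrm{Sp}(\h)$ acts transitively on Lagrangian subspaces, choosing $g$ with $g\cdot\f=\f'$ produces the desired $J$. The main obstacle I anticipate lies not in part (3) but in the non-semisimplicity underlying parts (1) and (2): controlling the nilpotent part of the twist on $\Lambda\h$, analysing local modules of $A$ in a setting without semisimplicity, and carrying out the explicit gluing of the one-dimensional twisted part $A_{1}$ onto the exterior algebra $\Lambda\f$ while maintaining associativity and braided commutativity.
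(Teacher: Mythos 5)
Your overall architecture --- split $A = A_0 \oplus A_1$, force $A_1 \neq 0$, read off $\beta$ from the twist on the twisted sector, parametrise the untwisted part by a Lagrangian subspace, and deduce part (3) from the $Sp(\h)$-action (which is exactly the paper's proof of that part) --- is the right shape, but two of your key steps have genuine gaps. First, the twist on $\SF_1(\h)$ is $\beta^{-1}\omega$, i.e.\ $\beta^{-1}$ on $T$ and $-\beta^{-1}$ on $\Pi T$; so triviality of $\theta$ on a nonzero $A_1$ only forces ``$\beta=1$ and $A_1$ purely even'' \emph{or} ``$\beta=-1$ and $A_1$ purely odd'' (Lemma \ref{lem:twist=id}). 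Excluding the second branch is not a twist computation: in the paper it requires first proving $\dim A_1 = 1$ (Proposition \ref{prop:dimE=1|0}, which uses simplicity of $A$ over itself via Lemma \ref{lem:mult-of-nonzero-is-nonzero}) and then noting that a symmetric map $m : A_1 \ot A_1 \to A_0$ on a one-dimensional \emph{odd} space vanishes, contradicting non-degeneracy. Second, your argument that $A_1 \neq 0$ rests on a quantum-dimension count $d(A)=\sqrt{\dim\SF(\h)}$. That identity is a theorem for Lagrangian algebras in non-degenerate braided \emph{fusion} categories, but $\SF(\h)$ is not semisimple, and the paper explicitly remarks at the end of Section \ref{sec:simple-local-mod} that the formula $D({^\mathrm{loc}_A\C}) = D(\C)/d(A)^2$ has no available proof for finite tensor categories and is deferred to a separate publication; you cannot invoke it here. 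The paper's replacement is elementary: if $A_1=0$, then both $A$ and $\Pi A$ are local $A$-modules and are non-isomorphic, contradicting the definition of Lagrangian.

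Beyond these two points, the substance of part (2) --- that the untwisted part is an exterior-algebra-like object attached to a \emph{Lagrangian} (not merely isotropic) subspace, that $A_1\cong T$, and that the cross-sector products exist and are unique up to isomorphism --- is precisely where almost all of the paper's work lies, and your proposal defers it as an ``anticipated obstacle''. Note in particular that the paper does not classify commutative algebras in $\SF_0(\h)$ in isolation and then glue: the maximality of $\f$ is extracted from the \emph{interaction} with the twisted sector (surjectivity of $\mu_{11}: S(\h)\ot E\ot E\to L$ coming from simplicity, the associativity condition A111 involving the map $\phi: S^k(\h)\to S^{d-k}(\h)$, and condition A101), together with the purely algebraic Lemma \ref{lem:C-trivial-only-above-d2-2}, which says $\hat C$ can only annihilate elements of $S^k(\h)$ with $k>\frac d2-2$. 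Also beware that the untwisted part $L$ of $H(\f)$ constructed in the paper is the annihilator of $\f$ in $S(\h)$, generated by $m\in S^{\frac d2}(\f)$ with unit in $S^d(\h)$; its $S(\h)$-module structure is not the ``obvious'' $\Lambda\f$ with $\f$ acting by multiplication (their annihilators in $\h$ differ), so your identification of $A_0$ with $\Lambda\f$ needs to be made with care, and the multiplication on $L$ is a convolution-type product rather than the exterior product on the nose.
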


Note that the condition $\beta=1$ requires $d \in 4 \Zb$, and in the case of symplectic fermions $d \in 16\Zb$. 
In the symplectic fermion case one furthermore verifies that when $H(\f)$ exists, i.e.\ when $d \in 16\mathbb{Z}$, the character of $(\widehat{H(\f)})_\mathrm{ev}$ is $Z_\mathrm{min}(\tau)$.

\medskip

Theorem \ref{thm:intro-main-2} is purely a result about the braided tensor and ribbon structure of $\SF(\h)$. 
To obtain statements about the representation category $\Rep(\VOA(d)_\mathrm{ev})$ of the VOA $\VOA(d)_\mathrm{ev}$, we recall that
$\VOA(d)_\mathrm{ev}$ is $C_2$-cofinite  \cite{Abe:2005} and thus according to \cite{Huang:2010}, $\Rep(\VOA(d)_\mathrm{ev})$ is a braided tensor category.
We conjecture (see Conjecture \ref{conj:braided-equiv} for a more detailed version):

\begin{conjecture}\label{conj:braided-equiv-intro}
For $\beta = e^{- \pi i d/8}$, the functor
\be
F ~:~ \SF(\h)\longrightarrow\Rep(\VOA(d)_\mathrm{ev})\quad ,\quad X\longmapsto ({\widehat X})_\mathrm{ev} \ ,
\ee 
is a $\Cb$-linear braided monoidal equivalence. 
\end{conjecture}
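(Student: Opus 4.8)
The plan is to establish $F$ in three stages: first as a $\Cb$-linear equivalence of abelian categories, then as a monoidal equivalence, and finally as a braided one. Since $\VOA(d)_\mathrm{ev}$ is $C_2$-cofinite, both $\SF(\h)$ and $\Rep(\VOA(d)_\mathrm{ev})$ are finite $\Cb$-linear categories, each with exactly four simple objects, so the overall architecture is to match these finite non-semisimple structures and only then transport the braided tensor data across the equivalence.

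First I would check that $F$ is well defined and exact. An untwisted $\widehat\h$-representation is a $\VOA(d)$-module and a $\widehat\h_\mathrm{tw}$-representation is a twisted $\VOA(d)$-module; restricting the action to the even subalgebra $\VOA(d)_\mathrm{ev}$ and passing to the parity-even subspace $(\widehat X)_\mathrm{ev}$ then produces a genuine $\VOA(d)_\mathrm{ev}$-module, functorially and exactly in $X$. For essential surjectivity I would compare the four simple objects of $\SF(\h)$ --- the even and odd one-dimensional objects of $\Rep^\mathrm{fd}(\h)$ and of $\svect^\mathrm{fd}$ --- with the four known irreducible $\VOA(d)_\mathrm{ev}$-modules, matching them by their graded characters $\chi_i(\tau)$ and lowest conformal weights. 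Full faithfulness then follows by verifying that $F$ preserves the entire finite abelian structure: it sends projective covers to projective covers and induces isomorphisms on all $\mathrm{Ext}^1$ and extension groups, so that the quiver-with-relations descriptions of the two categories agree.

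The substantial step is to upgrade $F$ to a braided monoidal functor. On $\SF(\h)$ the associator, braiding, and ribbon twist were constructed in \cite{Runkel:2012cf} directly from the monodromy and asymptotic expansions of symplectic-fermion conformal blocks, with the parameter fixed to $\beta = e^{-\pi i d/8}$. On $\Rep(\VOA(d)_\mathrm{ev})$ the tensor product is the logarithmic $P(z)$-tensor product of Huang--Lepowsky--Zhang, whose associativity and commutativity isomorphisms are likewise encoded in the convergence, analytic continuation, and monodromy of products and iterates of intertwining operators \cite{Huang:2010}. I would build the tensorator $F(X)\ot F(Y)\xrightarrow{\sim}F(X\ot Y)$ from intertwining operators among the $(\widehat X)_\mathrm{ev}$, first matching fusion rules and then identifying the associativity and braiding constraints with those of \cite{Runkel:2012cf} by showing that the conformal blocks underlying the two constructions are literally the same spaces of solutions, with the same monodromy and asymptotics. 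Given this, the ribbon twist is $e^{2\pi i L_0}$ on each module, whose semisimple part equals $e^{2\pi i h}$ on a module of lowest conformal weight $h$ while its unipotent part records the logarithmic action of $L_0$; computing these weights fixes $\beta$ on the VOA side and matches it to $e^{-\pi i d/8}$.

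The main obstacle is exactly this monoidal and braided identification in the logarithmic, non-semisimple setting. The HLZ tensor product is defined very abstractly and is hard to compute; moreover $\Rep(\VOA(d)_\mathrm{ev})$ mixes the images of untwisted and twisted $\VOA(d)$-modules, so the relevant fusion involves twisted intertwining operators and conformal blocks with genuine logarithmic, rather than merely power-law, asymptotics. Controlling these finely enough to pin down every component of the associator and braiding --- and to verify the pentagon and hexagon on the nose rather than merely up to scalars --- is the crux. A possible shortcut I would pursue is a rigidity argument: once the $\Cb$-linear equivalence and the fusion rules are in place, show that the braided tensor structures on a finite category with these four simples and these fusion rules form a family parametrized essentially by $\beta$ (as already visible in $\SF(\h)$ through the constraint $\beta^4=(-1)^{d/2}$), so that matching a small set of invariants --- the twists $e^{2\pi i h}$ and the self-braiding of the nontrivial simple --- determines the equivalence and reduces the whole problem to the weight computation that yields $\beta=e^{-\pi i d/8}$.
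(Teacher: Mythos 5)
This statement is not proved in the paper: it is stated as a conjecture (and refined in Section 6.3 into Conjecture \ref{conj:braided-equiv} parts 1 and 2), with only partial supporting evidence offered (Proposition \ref{prop:V-on-descendents} on extending vertex operators, and the consistency checks of Appendix \ref{sec:OPEs}). Your text is accordingly a research plan rather than a proof, and each of its load-bearing steps is exactly one of the open problems. Concretely: (i) your claim that full faithfulness ``follows by verifying that $F$ preserves projective covers and induces isomorphisms on all $\mathrm{Ext}^1$'' is circular --- to verify this you would need an independent determination of the abelian structure of $\Rep(\VOA(d)_\mathrm{ev})$ (projective covers, Loewy series, quiver with relations), which is not available for general $d$ and is precisely the content of part 1 of the conjecture; knowing that both categories have four simples with matching characters does not determine the non-semisimple extension structure. (ii) Your identification of the HLZ tensor data with the $\SF(\h)$ data presupposes that the logarithmic intertwining operators of $\Rep(\VOA(d)_\mathrm{ev})$ coincide with the vertex operators of Definition \ref{def:vertex-op}; the paper explicitly warns that these are not the same objects (the latter are defined only on $A\otimes\widehat B$, satisfy fewer axioms, and their extension to $\widehat A\otimes\widehat B$ is only established for $A\in\SF_0(\h)$ in Proposition \ref{prop:V-on-descendents}, the twisted case being part 2 of the conjecture). (iii) The proposed ``rigidity'' shortcut rests on an unproven classification claim, namely that braided tensor structures on a finite non-semisimple category with these four simples and these fusion rules form a family parametrized by $\beta$; the constraint $\beta^4=(-1)^{d/2}$ classifies the structures constructed in \cite{Davydov:2012xg}, not all conceivable ones on an abstractly equivalent category. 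So while your outline correctly locates the difficulties, it does not close any of them, and the statement remains a conjecture.
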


In \cite{Huang:2014ixa} it is shown that for simple $C_2$-cofinite VOAs $\VOA$, there is a one-to-one correspondence between commutative algebras $A$ with trivial twist in $\Rep(\VOA)$ and extensions $\VOA_A$ of $\VOA$. 
If $\VOA$ is rational and satisfies some additional technical conditions, it is furthermore established in \cite{Huang:2014ixa} that local $A$-modules are in one-to-one correspondence to $\VOA_A$-modules.

We can combine the results of Theorem \ref{thm:intro-main-1} and \ref{thm:intro-main-2} as follows (see Theorem \ref{thm:classify-extensions}):

\begin{theorem}\label{thm:intro-main-3}
Assume Conjecture \ref{conj:braided-equiv-intro} and that the equivalence between $\VOA_A$-modules and local $A$-modules also holds for $\VOA(d)_\mathrm{ev}$. Then $\VOA(d)_\mathrm{ev}$ has a holomorphic extension iff $d \in 16 \Zb_{>0}$. In this case, every holomorphic extension of $\VOA(d)_\mathrm{ev}$ is isomorphic as a VOA to $\VOA_{D_{d/2}^+}$ with stress tensor \eqref{eq:T-FF_for_Zd2}.
\end{theorem}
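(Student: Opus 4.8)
The plan is to convert the VOA-theoretic notion of a holomorphic extension into the categorical notion of a Lagrangian algebra, apply the classification of Theorem \ref{thm:intro-main-2}, and transport the conclusion back to VOAs. First I would set up the dictionary furnished by the two standing assumptions. By \cite{Huang:2014ixa} the extensions of $\VOA(d)_\mathrm{ev}$ are in bijection with commutative algebras $A$ of trivial twist in $\Rep(\VOA(d)_\mathrm{ev})$, and by the assumed module correspondence the simple $\VOA_A$-modules match the simple local $A$-modules. Since the unit object $A$ is itself a local module, $\VOA_A$ is holomorphic precisely when $A$ is the \emph{only} simple local $A$-module, which is exactly the defining property of a Lagrangian algebra. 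Holomorphic extensions of $\VOA(d)_\mathrm{ev}$ therefore correspond to Lagrangian algebras in $\Rep(\VOA(d)_\mathrm{ev})$, and via the ribbon equivalence $F$ of Conjecture \ref{conj:braided-equiv-intro} to Lagrangian algebras in $\SF(\h)$.

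With this dictionary in place, the existence statement is immediate from Theorem \ref{thm:intro-main-2}(1): $\SF(\h)$ admits a Lagrangian algebra iff $\beta=1$, and by \eqref{eq:beta-sympferm-value} we have $\beta=e^{-\pi i d/8}$, so $\beta=1\Leftrightarrow d\in 16\Zb$; as $\h\neq 0$ forces $d>0$, this reads $d\in 16\Zb_{>0}$. The ``if'' direction is in any case exhibited concretely by Theorem \ref{thm:intro-main-1}, which realises $\VOA_{D_{d/2}^+}$ with stress tensor \eqref{eq:T-FF_for_Zd2} as a holomorphic extension of character $Z_\mathrm{min}$.

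For uniqueness, fix $d\in 16\Zb_{>0}$. By Theorem \ref{thm:intro-main-2}(2) every Lagrangian algebra in $\SF(\h)$ is some $H(\f)$, and the explicit extension $\VOA_{D_{d/2}^+}$ corresponds to a particular $H(\f_0)$. Given any further holomorphic extension, corresponding to $H(\f)$, Theorem \ref{thm:intro-main-2}(3) provides a ribbon autoequivalence $J$ of $\SF(\h)$ with $J(H(\f))\cong H(\f_0)$. The main obstacle is to upgrade this purely categorical statement to an isomorphism of VOAs: an abstract ribbon autoequivalence only guarantees an equivalence of the two module categories, not that the extensions agree as VOAs. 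I expect to close this gap by realising the relevant autoequivalences geometrically. The symplectic group $\mathrm{Sp}(\h)$ acts on the symplectic fermions, hence by automorphisms of $\VOA(d)_\mathrm{ev}$, and it acts transitively on the Lagrangian subspaces of $\h$; choosing $g\in\mathrm{Sp}(\h)$ with $g\,\f=\f_0$ yields a VOA automorphism $\phi_g$ whose pullback intertwines $H(\f)$ and $H(\f_0)$ and which should lift to a genuine isomorphism $\VOA_{H(\f)}\xrightarrow{\sim}\VOA_{H(\f_0)}\cong\VOA_{D_{d/2}^+}$ of extension VOAs. Verifying (i) that these automorphisms of $\VOA(d)_\mathrm{ev}$ implement the abstract autoequivalences $J$ of Theorem \ref{thm:intro-main-2}(3), and (ii) that an automorphism intertwining the two Lagrangian algebras really extends to a VOA isomorphism of the extensions rather than a mere module-category equivalence, is the crux of the argument; once it is in hand the three claims assemble into the stated classification.
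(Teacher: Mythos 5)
Your proposal follows essentially the same route as the paper: Lemma \ref{lem:class1} sets up exactly your dictionary between holomorphic extensions and Lagrangian algebras (using finiteness of the category of local modules and \cite[Thm.\,2.17]{eo} to pass from ``the unit is the only simple local module'' to the full Lagrangian property, which concerns \emph{all} local modules, not just the simple ones), while Lemma \ref{lem:class2} resolves your crux points (i) and (ii) precisely as you anticipate --- realising the autoequivalence as $G_g$ for $g \in Sp(\h)$ with $g(\f)=\f'$ and checking via Lemma \ref{lem:Gg-functor-on-vertex-op} that the induced map $\varphi_{g,H}=U(\hat g)\otimes\id$ on $(\widehat H)_\mathrm{ev}$ intertwines the vertex operators and preserves grading, vacuum and stress tensor, hence is a VOA isomorphism (though not one of extensions). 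No ideas beyond those you outline are needed.
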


Modulo the two assumptions we had to make, this answers question Q2.

\medskip

As a consistency check, in Appendix \ref{sec:OPEs} we analyse the vertex operator corresponding to $H(\f)$ and show that it does indeed contain a rank-$d/2$ Heisenberg VOA.

\bigskip
\noindent
{\bf Acknowledgements:}
The authors would like to thank 
Matthias Gaberdiel,
Terry Gannon,
Simon Lentner,
Geoffrey Mason
and
G\'erard Watts
for stimulating discussions.
These results were presented by IR at the conference
``The mathematics of conformal field theory'' (Australian National University, July 2015)
and at the
``Conference on Lie algebras, vertex operator algebras, and related topics'' (Notre Dame, August 2015).
IR would like to thank the organisers of both conferences for very stimulating meetings.

\section{Characters and modular invariants}\label{sec:char-mod-inv}

\subsection{Characters}\label{sec:symp-ferm-char}

The starting point of the study in this paper is a simple observation on the characters of symplectic fermions. Let $\VOA(d)$ the  vertex operator super-algebra (VOSA) of $d \in 2\Zb_{>0}$ symplectic fermions (we will define it in detail in Section \ref{sec:SF-VOSA}). It has central charge $c=-d$. We write 
\begin{equation}\label{eq:V(d)=V0+V1}
\VOA(d) ~=~ \VOA(d)_\mathrm{ev} \oplus \VOA(d)_\mathrm{odd}
\end{equation} 
for its decomposition into the parity even and odd part.  $\VOA(d)_\mathrm{ev}$ has four irreducible representations \cite{Kausch:1995py,Gaberdiel:1996np,Abe:2005}. Their lowest conformal weights are
\be\label{eq:V0-irrep-weights}
  h_1 = 0 \quad , \qquad
  h_2 = 1 \quad , \qquad
  h_3 = -\tfrac{d}{16} \quad , \qquad
  h_4 = -\tfrac{d}{16} + \tfrac12 \ .
\ee
To express their characters, we first define the functions (as usual, $q=e^{2 \pi i \tau}$)
\begin{align}
\chi_{ns,+}(\tau) &=  \Big( q^{\frac1{24}} \prod_{n=1}^\infty (1+q^n) \Big)^d \ ,
\qquad &
\chi_{ns,-}(\tau) &=  \Big( q^{\frac1{24}} \prod_{n=1}^\infty (1-q^n) \Big)^d\ ,
\nonumber \\
\chi_{r,+}(\tau) &=  \Big( q^{-\frac1{48}} \prod_{n=1}^\infty (1+q^{n-\frac12}) \Big)^d\ ,
\qquad &
\chi_{r,-}(\tau) &=  \Big( q^{-\frac1{48}} \prod_{n=1}^\infty (1-q^{n-\frac12}) \Big)^d\ .
\end{align}
Here, $\chi_{ns,\pm}$ is the character of $\VOA(d)$ with or without insertion of the parity involution $\omega$, and $\chi_{r,\pm}$ are characters of a twisted $\VOA(d)$-module.
The characters of the four irreducibles on $\VOA(d)_\mathrm{ev}$ are \cite{Kausch:1995py,Gaberdiel:1996np,Abe:2005}, in the order stated in \eqref{eq:V0-irrep-weights},
\begin{align}
  \chi_1(\tau) &= \tfrac12\big( \chi_{ns,+}(\tau) + \chi_{ns,-}(\tau) \big) \ ,
\qquad &
  \chi_2(\tau) &= \tfrac12\big( \chi_{ns,+}(\tau) - \chi_{ns,-}(\tau) \big)\ ,
\nonumber \\
  \chi_3(\tau) &= \tfrac12\big( \chi_{r,+}(\tau) + \chi_{r,-}(\tau) \big)\ ,
\qquad &
  \chi_4(\tau) &= \tfrac12\big( \chi_{r,+}(\tau) - \chi_{r,-}(\tau) \big) \ .
\end{align}
Their behaviour under S and T modular transformations is \cite{Gaberdiel:1996np,Abe:2005}: \begin{align}
\chi_1(\tau{+}1) &= e^{\frac{\pi i d}{12} } \, \chi_1(\tau)
~,~&
\chi_1\big(\tfrac{-1}{\tau}\big) &= 2^{-\frac{d}{2}-1} \big(\chi_3(\tau) - \chi_4(\tau)\big) + \tfrac12 (-i\tau)^\frac{d}{2} \big(\chi_1(\tau) - \chi_2(\tau)\big) \ ,
\nonumber\\
\chi_2(\tau{+}1) &= e^{\frac{\pi i d}{12} } \,\chi_2(\tau) 
~,~&
\chi_2\big(\tfrac{-1}{\tau}\big) &= 2^{-\frac{d}{2}-1} \big(\chi_3(\tau) - \chi_4(\tau)\big) - \tfrac12 (-i\tau)^\frac{d}{2} \big(\chi_1(\tau) - \chi_2(\tau)\big) \ ,
\nonumber\\
\chi_3(\tau{+}1) &= e^{-\frac{\pi i d}{24} } \,\chi_3(\tau)
~,~&
\chi_3\big(\tfrac{-1}{\tau}\big) &= \tfrac{1}{2} \big(\chi_3(\tau) + \chi_4(\tau)\big) + 2^{\frac{d}{2}-1} \big(\chi_1(\tau) + \chi_2(\tau)\big) \ ,
\nonumber\\
\chi_4(\tau{+}1) &= -e^{-\frac{\pi i d}{24} } \,\chi_4(\tau)
~,~&
\chi_4\big(\tfrac{-1}{\tau}\big) &= \tfrac{1}{2} \big(\chi_3(\tau) + \chi_4(\tau)\big) - 2^{\frac{d}{2}-1} \big(\chi_1(\tau) + \chi_2(\tau)\big) \ .
\label{eq:ST-transf}
\end{align}

\subsection{Modular invariant linear combinations of characters}

Define $Z(\tau) = \sum_{i=1}^4 z_i \chi_i(\tau)$ for $z_i \in \Cb$. We would like to know for which subspace of $\Cb^4$ the function $Z(\tau)$ is modular invariant, or at least projectively modular invariant (see below). One can then easily read off all non-negative integer solutions.

\medskip

We say that $Z(\tau)$ is {\em projectively modular invariant} if it satisfies S- and T-invariance up to a factor, $Z(-1/\tau) = \xi \,Z(\tau)$, $Z(\tau+1) = \zeta \,Z(\tau)$ for some $\xi,\zeta \in \Cb^\times$. Since the S-transformation is an involution, a non-zero solution must have $\xi^2=1$.

One quickly checks from \eqref{eq:ST-transf} that the $\Cb$-vector space of solutions to the projective S-invariance condition is spanned by
\be
\begin{cases}
  (0,0,1,1)
  ~,~~  (2^{\frac{d}2-1},2^{\frac{d}2-1},1,0) & ;~\xi = 1 \\
 (2^{\frac{d}2},2^{\frac{d}2},-1,1) & ;~\xi=-1 
\end{cases}   
\qquad .
\ee
The T-transformation in  \eqref{eq:ST-transf} shows that $\chi_3$ and $\chi_4$ cannot appear at the same time in a projectively modular invariant linear combination of characters. This rules out the case $\xi=-1$. For $\xi=1$, the vector $(z_1,z_2,z_3,z_4)$ has to be in one of the subspaces
\be
 \mathrm{a)}
 ~~\Cb \, ( 2^{\frac{d}2-1} , 2^{\frac{d}2-1} , 1 , 0 )
 \quad , \quad
 \mathrm{b)}
 ~~ \Cb \, ( -2^{\frac{d}2-1} , -2^{\frac{d}2-1} , 0 , 1 )
\ee
If $z_3 \neq 0$, T-invariance up to a factor requires $d \in 16 \Zb$, and if $z_4 \neq 0$ one needs $d \in 16 \Zb + 8$. However, only case a) contains non-negative integer solutions which are non-zero. We have shown:

\begin{proposition}\label{prop:(projectively)modinv}
The function $Z(\tau) = \sum_{i=1}^4 z_i\, \chi_i(\tau)$ is non-zero and projectively modular invariant (resp.\ modular invariant) with non-negative integers $z_i$ if and only if $d \in 16 \Zb$ (resp.\ $d \in 48\Zb$) and $(z_1,z_2,z_3,z_4) \in \Zb_{>0} \, (2^{\frac{d}2-1} , 2^{\frac{d}2-1} , 1 , 0 )$. 
\end{proposition}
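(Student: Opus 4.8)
The plan is to turn everything into a finite linear-algebra computation with the explicit $S$- and $T$-matrices in \eqref{eq:ST-transf}, imposing projective invariance as the requirement that the transformed function be a constant multiple of $Z$. First I would treat projective $S$-invariance. Substituting \eqref{eq:ST-transf} into $Z(-1/\tau)$ and collecting coefficients in the basis $\chi_1,\dots,\chi_4$, I note that the transforms of $\chi_1$ and $\chi_2$ together contribute the term $\tfrac12(z_1-z_2)\,(-i\tau)^{\frac d2}(\chi_1-\chi_2)$. Since $Z(\tau)$ has $\tau$-independent coefficients and we demand $Z(-1/\tau)=\xi\,Z(\tau)$ with constant $\xi$, this $\tau$-dependent piece has no counterpart on the right-hand side and must vanish, forcing $z_1=z_2$. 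Applying $S$ twice (which returns $\tau$ to itself) gives $\xi^2=1$, so $\xi\in\{+1,-1\}$. With $z_1=z_2$ the remaining coefficient-matching equations are linear with constant coefficients, and solving them for each value of $\xi$ reproduces the spanning vectors quoted before the proposition: for $\xi=1$ the two-dimensional space spanned by $(0,0,1,1)$ and $(2^{\frac d2-1},2^{\frac d2-1},1,0)$, and for $\xi=-1$ the line through $(2^{\frac d2},2^{\frac d2},-1,1)$.

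Next I would impose projective $T$-invariance. As the $T$-action in \eqref{eq:ST-transf} is diagonal, $Z(\tau+1)=\zeta\,Z(\tau)$ forces every $\chi_i$ with $z_i\neq0$ to carry the same phase $\zeta$. Because $\chi_3$ and $\chi_4$ transform with opposite-sign phases $\pm e^{-\pi i d/24}$, they cannot occur simultaneously; this immediately discards the $\xi=-1$ vector and restricts the two-dimensional $\xi=1$ space to the two lines $\mathrm a)\ \Cb\,(2^{\frac d2-1},2^{\frac d2-1},1,0)$ and $\mathrm b)\ \Cb\,(-2^{\frac d2-1},-2^{\frac d2-1},0,1)$. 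Matching the phase $e^{\pi i d/12}$ of $\chi_1,\chi_2$ to that of $\chi_3$ on line a) gives $e^{\pi i d/12}=e^{-\pi i d/24}$, i.e.\ $e^{\pi i d/8}=1$, which holds exactly for $d\in16\Zb$, whereas the analogous matching on line b) would require $d\in16\Zb+8$.

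Finally I would impose integrality and non-negativity. Line b) has negative first two entries and so yields no non-zero non-negative solution, leaving only positive multiples of $(2^{\frac d2-1},2^{\frac d2-1},1,0)$; these are non-negative integers precisely when the scalar is a positive integer, giving $(z_1,z_2,z_3,z_4)\in\Zb_{>0}\,(2^{\frac d2-1},2^{\frac d2-1},1,0)$ together with $d\in16\Zb$. For genuine modular invariance I would additionally demand $\zeta=1$; since $e^{-\pi i d/24}=1$ forces $d\in48\Zb$ (which in turn gives $e^{\pi i d/12}=1$), this singles out the modular-invariant case $d\in48\Zb$. I expect the only genuine subtlety to be the correct handling of the inhomogeneous factor $(-i\tau)^{\frac d2}$ in the $S$-transform, which reflects the non-semisimplicity of the theory; once one argues that it must cancel and hence that $z_1=z_2$, the remainder is routine phase bookkeeping.
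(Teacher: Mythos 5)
Your proposal is correct and follows essentially the same route as the paper: solve the projective $S$-invariance condition to obtain the three spanning vectors, use the diagonal $T$-action to exclude the simultaneous appearance of $\chi_3$ and $\chi_4$ (killing the $\xi=-1$ line and reducing to the two lines a) and b)), extract the congruences $d\in16\Zb$ resp.\ $d\in16\Zb+8$, and discard line b) by non-negativity. The only place you go beyond the paper's terse ``one quickly checks'' is in spelling out why the $(-i\tau)^{d/2}$ term forces $z_1=z_2$, which is a correct and welcome elaboration rather than a deviation.
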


We will write $Z_\mathrm{min}(\tau)$ for the minimal solution, that is,
\be\label{eq:minimal-Zd}
	Z_\mathrm{min}(\tau) ~=~ 2^{\frac{d}2-1}\big\{ \chi_1(\tau) + \chi_2(\tau) \big\} \,+\, \chi_3(\tau) 
	\qquad , \quad \text{where} ~~ d \in 16 \Zb \ .
\ee
Note that its lowest conformal weight occurs in $\chi_3$ and is $-d/16$.

\subsection[Relation to characters of affine $so(d)$ at level 1]{Relation to characters of affine \boldmath$so(d)$ at level 1}\label{sec:char-rel-so(d)1}

The affine Lie algebra $\widehat{so}(d)_1$, for even $d$, has four irreducible integrable highest weight representations, which we will denote by $\one$, $v$, $s$, $c$ (vacuum, vector, spinor, and conjugate spinor), see e.g.\ \cite[Sect.\,15.5.4]{DMS}. Via the Sugawara constructions, these carry a representation of the Virasoro algebra of central charge $c=d/2$, and their conformal weights are
\be\label{eq:so(d)-irrep-weights}
  h_\one = 0 \quad , \qquad
  h_v = \tfrac12 \quad , \qquad
  h_s = h_c = \tfrac{d}{16}  \ .
\ee
The characters of these representations can be expressed in terms of Jacobi theta functions as
\begin{align}
&\chi_\one^{so}(\tau) = \frac12\Bigg( 
	\Big(\frac{\theta_3(q)}{\eta(q)}\Big)^{\!\!\frac d2} + 
	\Big(\frac{\theta_4(q)}{\eta(q)}\Big)^{\!\!\frac d2} \Bigg)
~~ , \quad
\chi_v^{so}(\tau) = \frac12\Bigg( 
	\Big(\frac{\theta_3(q)}{\eta(q)}\Big)^{\!\!\frac d2} -
	\Big(\frac{\theta_4(q)}{\eta(q)}\Big)^{\!\!\frac d2} \Bigg) \ ,
	\nonumber \\
&\chi_s^{so}(\tau) = \chi_c^{so}(\tau) =  \frac12 \Big(\frac{\theta_2(q)}{\eta(q)}\Big)^{\!\!\frac d2} \ .
\label{eq:so(d)-irrep-char}
\end{align}
Here, the Dedekind eta function and the Jacobi theta functions are given by
\begin{align}
\eta(q) &=  q^{\frac1{24}}\prod_{n=1}^\infty(1-q^n) 
= q^{\frac1{24}}(1 + q + 2q^2 + 3q^3 + \dots)^{-1}
\nonumber\\
\theta_2(q) &= 2q^{\frac1{12}}\, \eta(q) \prod_{n=1}^\infty \big(1+q^{n}\big)^2 =  \sum_{m=-\infty}^\infty q^{\frac12 (m+\frac{1}{2})^2}  =  2q^{\frac1{8}}(1 + q + q^3 + \dots) 
\nonumber\\
\theta_3(q) &=  q^{-\frac1{24}}  \,\eta(q) \prod_{n=1}^\infty \big(1+q^{n-\frac12}\big)^2=  \sum_{m=-\infty}^\infty q^{\frac12  m^2}  =  1 + 2q^{\frac12} + 2q^2 + \dots
\nonumber\\
\theta_4(q) &= q^{-\frac1{24}} \,\eta(q)  \prod_{n=1}^\infty \big(1-q^{n-\frac12}\big)^2 =  \sum_{m=-\infty}^\infty (-1)^m q^{\frac12  m^2}  =  1 - 2q^{\frac12} + 2q^2 + \dots
\end{align}
{}From these formulas we can read off\footnote{
	We thank Matthias Gaberdiel for making us aware of the relation between the modular invariants of Proposition \ref{prop:(projectively)modinv} and the characters of $\widehat{so}(d)_1$.}
\be
	2^{\frac{d}2-1}(\chi_1+\chi_2) = \chi_{s/c}^{so}
	\quad , \quad
	\chi_3 = \chi_\one^{so} \ .
\ee
Thus $Z_\mathrm{min}(\tau) = \chi_\one^{so}(\tau) + \chi_{s/c}^{so}(\tau)$.

In fact,
$Z_\mathrm{min}(\tau)$ is $\eta^{-d/2}$ times the theta series for the even selfdual lattice $D_{d/2}^+$. 
The lattice $D_{r}^+$ is an extension of the 
$so(2r)$ root lattice (see e.g.\ \cite[Sect.\,4.7]{SPLAG}). In more detail, the root lattice of $so(2r)$ is
\be\label{eq:D-root-lattice}
D_{r} = \big\{ v \in \Zb^{r}\,\big|\, {\textstyle \sum_{i=1}^{r} v_i \in 2\Zb} \big\} \ .
\ee
For $r \in 8 \Zb$, we define $D_{r}^+$ 
by setting, for $[1] = (\frac12,\frac12,\dots,\frac12)$,
\be\label{eq:D+-def}
	D_{r}^+ \,=\, D_{r} \cup \big(D_{r} {+} [1]\big) 
	\,=\,
	\big\{ v \in \Zb^{r} \cup \big(\Zb^{r} {+} [1]\big) \,\big|\, {\textstyle \sum_{i=1}^{r} v_i \in 2\Zb } \big\} \ .
\ee
The theta series of $D_{r}^+$ has the presentation \cite{SPLAG}
\be\sum_{v \in D_{r}^+} q^{\frac12(v,v)} = \frac{1}{2}\Big(\theta_2(q)^{r} + \theta_3(q)^{r} + \theta_4(q)^{r} \Big) \ .
\ee
Altogether, for $Z_\mathrm{min}(\tau)$ this gives
\be
	Z_\mathrm{min}(\tau) 
	\,=\, \frac{1}{\eta(q)^{\frac{d}{2}}} \sum_{v \in D_{d/2}^+} q^{\frac12(v,v)}
	\,=\, q^{-\frac{d}{48}}(1 +
	\tfrac{d(d-1)}2 \, q
	+ \dots) \,+\, q^{\frac{d}{24}}2^{\frac{d}{2}-1}(1 + dq + \dots) 
\ .
\ee

\section{Symplectic fermion vertex operator super-algebra}\label{sec:SF-VOSA}

In this section we show that the relation between projectively modular invariant linear combinations of characters of $\VOA(d)_{\mathrm{ev}}$ (Proposition \ref{prop:(projectively)modinv}) and the character of the lattice VOA for the even self-dual lattice $D^+_{d/2}$ has a simple explanation: $\VOA(d)_{\mathrm{ev}}$ can be embedded, as a VOA, in the shifted
lattice VOA for $D^+_{d/2}$ (Corollary \ref{cor:symp-even-in-lattice}).

\subsection{Purely odd free super-bosons}\label{sec:purely-odd-free-super}

The VOSA of $d$ symplectic fermions is defined in \cite{Abe:2005} and is a special case of free super-bosons \cite{Kac:1998,Runkel:2012cf}. We briefly summarise the construction.

Let $\h$ be a finite-dimensional purely odd complex super-vector space 
of dimension
\be
	d = \dim \h ~ \in \, 2 \Zb_{>0} 
\ee
with super-symmetric non-degenerate bilinear form $(-,-)$. That is, for $a,b \in \h$ we have $(a,b) = -(b,a)$. Define the affine Lie super-algebra
\be
	\widehat\h = \h[t,t^{-1}] \oplus \Cb K \ ,
\ee
where the $K$ is even and all $a \otimes t^m$ for $a \in \h$ are odd. We abbreviate $a_m = a \otimes t^m$. The Lie bracket is defined by taking $K$ to be central and, for $a,b\in\h$, $m,n \in \Zb$,
\be\label{eq:SF-liebracket}
	[a_m,b_n] = (a,b) \,m \, \delta_{m+n,0}\, K \ .
\ee
Here we use the convention of Lie super-algebras that $[a_m,b_n]$ is symmetric since $a_m,b_n$ are both odd. 

Write $\widehat\h_{\ge 0} \subset \widehat\h$ for the sub Lie super-algebra spanned by the $a_m$ with $a \in \h$ and $m \ge 0$. Let $\Cb_1$ be the $\widehat\h_{\ge 0} \oplus \Cb K$-module on which $K$ acts as $1$ and $\widehat\h_{\ge 0}$ acts as zero. As a $\widehat\h$-module, $\VOA(\h)$ is the induced module
\be\label{eq:free-superbos-VOSA}
	\VOA(\h) = U(\widehat\h) \otimes_{\widehat\h_{\ge 0} \oplus \Cb K} \Cb_1 \ .
\ee
Note that $\VOA(\h)$ is in particular a super-vector space.
On $\VOA(\h)$, for each $a \in \h$ we get a field
\be
	a(x) = \sum_{m \in \Zb} a_m x^{-m-1} \in (\End \VOA(\h))[\![x^{\pm1}]\!] 
\ee
on $\VOA(\h)$. These generate a VOSA such that $Y(a,x) = a(x)$ via the reconstruction theorem. We will denote this VOSA also by $\VOA(\h)$ -- this is the {\em symplectic fermion VOSA}.

Let $\{ \alpha^i \,|\,i=1,\dots,d\}$ be a basis of $\h$ and let $\{\beta^i\}$ be the dual basis with respect to $(-,-)$, such that $(\alpha^i,\beta^j)=\delta_{ij}$.
The stress tensor (or Virasoro element) is
\be\label{eq:SF-stress-tensor}
	T^{SF} = \tfrac12 \sum_{i=1}^d \beta^i_{-1} \alpha^i_{-1} \one \ ,
\ee
see \cite[Sect.\,1.9]{Frenkel:1987} and \cite[Sect.\,3.5]{Kac:1998}.
The corresponding Virasoro algebra has central charge $-d$.

As already remarked in \eqref{eq:V(d)=V0+V1}, $\VOA(\h)$ decomposes into a parity even and odd part as $\VOA(\h) = \VOA(\h)_\mathrm{ev} \oplus \VOA(\h)_\mathrm{odd}$. By construction, $\VOA(\h)_\mathrm{ev}$ is a (non-super) VOA, and it is shown in \cite{Abe:2005} that $\VOA(\h)_\mathrm{odd}$ is an irreducible $\VOA(\h)_\mathrm{ev}$-module.

\subsection{Realisation inside a lattice vertex operator super algebra}\label{sec:Vd-inside-lattice}

The even part of a single pair of symplectic fermions, the $W_2$-algebra, has a free field construction in terms of a free boson with modified stress tensor \cite{Fuchs:2003yu,Adamovic:2007er}. In more detail, consider $\Rb$ with its standard inner product and the lattice $2 \Zb \subset \Rb$. The $W_2$-algebra is defined as the kernel of a certain screening charge and is a sub-VOA of $\VOA_{2 \Zb}$. Here $\VOA_{2 \Zb}$ is the lattice-VOA for the lattice $2\Zb$, except that the Virasoro element is given by
\be
	T^\mathrm{FF} = \tfrac12\big( H_{-1}H_{-1} -H_{-2} \big) \one 
\ee
(`FF' for Feigin-Fuchs), where $H_m$ are the standard Heisenberg generators, $[H_m,H_n] = m\, \delta_{m+n,0} \,K$ and $K$ acts as $1$. The central charge of the resulting Virasoro representation is $c^{FF} = -2$.

Let $\h = \Cb^{0|2r}$, let $\{ \alpha^i | i=1,\dots,2r \}$ be the standard basis of $\Cb^{0|2r}$ and let the super-symmetric pairing on $\h$ be $(\alpha^i,\alpha^j) = \delta_{i+\frac d2,j} - \delta_{i,j+\frac d2}$. We write $\VOA(2r)$ instead of $\VOA(\h)$. 
A single pair of symplectic fermions, $\VOA(2)$, forms a sub-VOSA of the lattice VOSA $\VOA_{\Zb}$ given by $\Zb \subset \Rb$. Here, even and odd integers correspond to even and odd degree super-vector spaces. That is, as a representation of the Heisenberg algebra in $\svect$, $\VOA_{\Zb}$ is given by
\be
	\VOA_\Zb = \bigoplus_{m \in \Zb} \Pi^m F_m ~\in~ \svect \ ,
\ee
where $F_m$ is the highest weight module of the Heisenberg algebra with highest weight vector $v_m$ satisfying $H_0 v_m = m v_m$, and $\Pi^m$ is the identity for $m$ even and parity shift for $m$ odd. Note that $(\VOA_{\Zb})_\mathrm{ev} = \VOA_{2\Zb}$.

Let $Y(-,x) : \VOA_{\Zb} \to \mathrm{End}( \VOA_{\Zb}) [\![x^{\pm 1} ]\!]$ be the vertex operator of $\VOA_{\Zb}$. 
Following \cite{Kausch:1995py,Fuchs:2003yu,Adamovic:2007er}, we consider the following elements and their mode expansions: 
\be
	f^*  := v_1
	~ ,~~ f := -H_{-1} v_{-1} \quad , \quad 
	Y(f^*,x) = \sum_{m \in \Zb} f^*_m x^{-m-1} ~,~~
	Y(f,x) = \sum_{m \in \Zb} f_m x^{-m-1} \ .
\ee
One computes
\begin{align}
	Y(f^*,x)f &= - x^{-1} \exp\big(\sum_{m<0} \tfrac{x^{-m}}{-m} H_m \big)\exp\big(\sum_{m>0} \tfrac{x^{-m}}{-m} H_m \big) H_{-1}v_{-1} 
	\nonumber\\
	&= x^{-2} \one 
	- \tfrac12 (H_{-1} H_{-1} - H_{-2}) \one 
	+ 
	O(x) \ ,
	\label{eq:f*f-OPE}
\end{align}
while $Y(f,x)f$ and $Y(f^*,x)f^*$ are regular. For the modes of $f$ and $f^*$ we hence obtain the non-trivial (super-)bracket (i.e.\ an anticommutator, as $f,f^*$ are both parity-odd):
\be
	[f^*_m,f_n] = m \, \delta_{m+n,0} \, \id \ .
	\label{eq:sf-modes-act-VZ}
\ee
Thus $\VOA_\Zb$ becomes a $\widehat\h$-module (with $r=1$) where  $\alpha^1_m$ acts as $f^*_m$ and $\alpha^2_m$ as $f_m$.

On the vacuum $\one \in \VOA_{\Zb}$, we have $f_m \one = 0 = f^*_m \one$ for all $m \in \Zb_{\ge 0}$, so that $\one$ generates a highest weight module of $\widehat\h$ in $\VOA_{\Zb}$. Since  highest weight modules of $\widehat\h$ are simple 
(see, e.g., Section \ref{sec:brmon-on-rep}), we obtain an injective homomorphism of $\widehat\h$-modules $\iota : \VOA(2) \to \VOA_{\Zb}$. Since $Y(\alpha^i,x)$, $i=1,2$, generate $\VOA(2)$, this injective module homomorphism becomes an embedding of vertex super-algebras. To turn this into an embedding of VOSA, we need in addition:

\begin{lemma}
The Virasoro elements satisfy $\iota(T^{SF}) = T^{FF}$.
\end{lemma}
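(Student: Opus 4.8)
The plan is to evaluate both sides explicitly in the single-pair case $d=2$ and match them, the only real work being the bookkeeping of the antisymmetric pairing and the super-signs.

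First I would compute the dual basis $\{\beta^1,\beta^2\}$ of $\{\alpha^1,\alpha^2\}$ with respect to $(-,-)$. With the convention $(\alpha^1,\alpha^2)=1=-(\alpha^2,\alpha^1)$ and $(\alpha^i,\alpha^i)=0$, solving $(\alpha^i,\beta^j)=\delta_{ij}$ gives $\beta^1=\alpha^2$ and $\beta^2=-\alpha^1$. Substituting into \eqref{eq:SF-stress-tensor} yields $T^{SF}=\tfrac12\big(\alpha^2_{-1}\alpha^1_{-1}-\alpha^1_{-1}\alpha^2_{-1}\big)\one$. Since $[\alpha^1_{-1},\alpha^2_{-1}]=(\alpha^1,\alpha^2)(-1)\delta_{-2,0}K=0$, the two odd operators anticommute, so this collapses to $T^{SF}=\alpha^2_{-1}\alpha^1_{-1}\one$.

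Next I would push this through $\iota$. Because $\iota$ is a homomorphism of $\widehat\h$-modules sending the vacuum to the vacuum, with $\alpha^1_m\mapsto f^*_m$ and $\alpha^2_m\mapsto f_m$, one gets $\iota(T^{SF})=f_{-1}f^*_{-1}\one$. By the vacuum axiom $f^*=f^*_{-1}\one$ and $f=f_{-1}\one$, so $f^*_{-1}f_{-1}\one=f^*_{-1}f$ is exactly the coefficient of $x^0$ in $Y(f^*,x)f$; reading this off from \eqref{eq:f*f-OPE} gives $f^*_{-1}f_{-1}\one=-\tfrac12(H_{-1}H_{-1}-H_{-2})\one=-T^{FF}$. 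Finally, the anticommutator \eqref{eq:sf-modes-act-VZ} at modes $(-1,-1)$ reads $f_{-1}f^*_{-1}+f^*_{-1}f_{-1}=(-1)\delta_{-2,0}\,\id=0$, hence $\iota(T^{SF})=f_{-1}f^*_{-1}\one=-f^*_{-1}f_{-1}\one=T^{FF}$, as claimed.

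The computation is short, so there is no single hard step; the main thing to watch is sign discipline — the antisymmetry of $(-,-)$ in the dual-basis step, the super-sign in the anticommutator, and the correct identification of the mode $f^*_{-1}$ with the $x^0$-coefficient of the field $Y(f^*,x)$. A reassuring consistency check is that the central charges already agree, $c^{SF}=-2=c^{FF}$, which is forced once the Virasoro elements coincide.
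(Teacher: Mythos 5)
Your proof is correct and follows essentially the same route as the paper's: compute the dual basis for the antisymmetric pairing, reduce $T^{SF}$ to $-\alpha^1_{-1}\alpha^2_{-1}\one$, apply $\iota$, and read off $-f^*_{-1}f=\tfrac12(H_{-1}H_{-1}-H_{-2})\one$ from the OPE \eqref{eq:f*f-OPE}. The paper's version is just terser, suppressing the dual-basis and sign bookkeeping that you spell out.
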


\begin{proof}
By \eqref{eq:SF-stress-tensor}, the stress tensor for one pair of symplectic fermions is $T^{SF} = - \alpha_{-1}^1 \alpha^2$. Thus $\iota(T^{SF}) = - f^*_{-1} f$. From \eqref{eq:f*f-OPE} we read off $-f^*_{-1} f = \tfrac12 (H_{-1} H_{-1} - H_{-2})\one$, as required.
\end{proof}

We can now take the $r$-fold cartesian product of the above construction. That is, we consider the lattice $\Zb^r$ inside $\Rb^r$ with its standard inner product. An element $(x_1,\dots,x_r) \in \Zb^r$ corresponds to a parity even (resp.\ odd) subspace of the lattice VOSA $\VOA_{\Zb^r}$ if $x_1+\cdots+x_r$ is even (resp.\ odd). We denote the generators of the $r$ copies of the Heisenberg algebra by $H^i_m$, $i=1,\dots,r$, $m \in \Zb$. To each $x \in \Zb^r$ there corresponds a highest weight module $F_x$ of the $r$ copies of the Heisenberg algebra, and we denote the highest weight vector by $v_x$. 
Let $e_1,\dots,e_r \in \Zb^r$ be the standard set of generators. With these notations in place, the above arguments establish the following theorem, which is the main result of this section.

\begin{theorem}\label{thm:symp-ferm-in-lattice}
There is a unique embedding of VOSAs 
$$\iota : \VOA(2r) \to \VOA_{\Zb^r} \ ,$$ 
such that $\iota(\alpha^i) = v_{e_i}$ and $\iota(\alpha^{r+i}) =  -H^i_{-1} v_{-e_i}$, $i=1,\dots,r$. The image of the Virasoro element is 
$\iota(T^{SF}) = \frac12 \sum_{i=1}^r(H_{-1}^iH_{-1}^i - H_{-2}^i)$.
\end{theorem}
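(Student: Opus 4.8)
The plan is to obtain the general statement by taking the $r$-fold product of the $r=1$ construction of Section \ref{sec:Vd-inside-lattice}, using that $\Zb^r \subset \Rb^r$ is an orthogonal direct sum of $r$ copies of $\Zb \subset \Rb$, so that $\VOA_{\Zb^r}\cong\VOA_\Zb^{\otimes r}$ as VOSAs. Concretely, I would define for each $i = 1,\dots,r$ the candidate images $f^{*,i} := v_{e_i}$ and $f^{i} := -H^i_{-1}v_{-e_i}$, with modes $f^{*,i}_m$ and $f^i_m$ read off from $Y(f^{*,i},x)$ and $Y(f^i,x)$ exactly as in Section \ref{sec:Vd-inside-lattice}.

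First I would check that these modes define an action of $\widehat\h$ (with $d = 2r$) on $\VOA_{\Zb^r}$ satisfying the defining bracket \eqref{eq:SF-liebracket} for the pairing $(\alpha^i,\alpha^j) = \delta_{i+r,j} - \delta_{i,j+r}$; that is, under $\alpha^i_m \mapsto f^{*,i}_m$ and $\alpha^{r+i}_m \mapsto f^i_m$ one needs
\be
	[f^{*,i}_m, f^j_n] = \delta_{ij}\, m\, \delta_{m+n,0}\,\id \ ,
\ee
together with the vanishing of all remaining brackets among the $f^{*,i}$'s and among the $f^{i}$'s. For $i=j$ this is exactly the computation \eqref{eq:f*f-OPE}--\eqref{eq:sf-modes-act-VZ} carried out inside the $i$-th copy of $\VOA_\Zb$. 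For $i \neq j$ the relevant lattice vectors satisfy $(e_i,\pm e_j) = 0$, so every cross operator product such as $Y(f^{*,i},x)f^j$ has no singular part in $x$; by the Borcherds commutator formula a regular operator product forces the corresponding (anti)commutator of modes to vanish.

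Granting the $\widehat\h$-action, I would then repeat the structural argument of Section \ref{sec:Vd-inside-lattice} verbatim: the vacuum $\one \in \VOA_{\Zb^r}$ satisfies $f^{*,i}_m\one = 0 = f^i_m\one$ for all $m \ge 0$ and all $i$, hence generates a highest-weight $\widehat\h$-module of highest weight $0$. Since such modules are simple (Section \ref{sec:brmon-on-rep}), the $\widehat\h$-module map sending the vacuum of $\VOA(2r)$ to $\one$ is injective, and because the fields $Y(\alpha^k,x)$, $k=1,\dots,2r$, generate $\VOA(2r)$, this map is an embedding of vertex super-algebras taking the stated values on the $\alpha^k$. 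Uniqueness is then immediate, since any such embedding is fixed by its values on the generators $\alpha^k$.

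Finally, to upgrade this to an embedding of VOSAs I would match Virasoro elements. Computing the dual basis of $(-,-)$ gives $\beta^i = \alpha^{r+i}$ and $\beta^{r+i} = -\alpha^i$ for $i=1,\dots,r$, so that \eqref{eq:SF-stress-tensor} simplifies, using that the odd modes $\alpha^i_{-1}$ and $\alpha^{r+i}_{-1}$ anticommute (their bracket vanishes as $m+n=-2\neq 0$), to $T^{SF} = -\sum_{i=1}^r \alpha^i_{-1}\alpha^{r+i}$. Applying $\iota$ and invoking the computation $-f^{*,i}_{-1}f^i = \tfrac12(H^i_{-1}H^i_{-1}-H^i_{-2})\one$ of the preceding Lemma in each factor then yields $\iota(T^{SF}) = \tfrac12\sum_{i=1}^r(H^i_{-1}H^i_{-1}-H^i_{-2})\one$, as claimed. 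The step I expect to be the main obstacle is the $i\neq j$ vanishing: the lattice VOSA vertex operators carry $2$-cocycle sign factors, and reordering odd modes across different factors produces super-signs, so one must confirm that these cocycles factorise over the orthogonal summands of $\Zb^r$ and generate no spurious term. Once the factorisation $\VOA_{\Zb^r}\cong\VOA_\Zb^{\otimes r}$ is in place, this reduces to the orthogonality $(e_i,\pm e_j)=0$ and becomes routine.
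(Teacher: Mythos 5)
Your proposal is correct and follows essentially the same route as the paper: the paper likewise performs the OPE computation \eqref{eq:f*f-OPE} for a single pair, deduces the mode bracket \eqref{eq:sf-modes-act-VZ}, uses simplicity of the highest weight $\widehat\h$-module generated by the vacuum to get injectivity, matches the Virasoro elements via $T^{SF}=-\alpha^1_{-1}\alpha^2$ and $-f^*_{-1}f=\tfrac12(H_{-1}H_{-1}-H_{-2})\one$, and then passes to the $r$-fold cartesian product. The only difference is that you spell out the $i\neq j$ cross-bracket vanishing and the cocycle factorisation over orthogonal summands, which the paper leaves implicit in the phrase ``take the $r$-fold cartesian product''.
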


The even subspace of $\VOA_{\Zb^r}$ is $\VOA_{D_r}$, where the root lattice $D_r$ was given in \eqref{eq:D-root-lattice}.
We immediately obtain:

\begin{corollary}\label{cor:symp-ferm-in-Dr-lattice}
The embedding of Theorem \ref{thm:symp-ferm-in-lattice} restricts to an embedding $\iota : \VOA(2r)_{\mathrm{ev}} \to \VOA_{D_r}$ of VOAs. 
\end{corollary}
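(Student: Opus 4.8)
The plan is to observe that the VOSA embedding $\iota$ of Theorem \ref{thm:symp-ferm-in-lattice} is parity-preserving, and then simply restrict it to the even subspaces, using the identification $(\VOA_{\Zb^r})_\mathrm{ev} = \VOA_{D_r}$ noted just above the corollary.

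First I would check that $\iota$ respects the super-grading. Every generator $\alpha^i$ of $\h$ is odd, and its image $\iota(\alpha^i) = v_{e_i}$ lies in the Fock space attached to $e_i \in \Zb^r$, whose coordinate sum is $1$ and hence odd; likewise $\iota(\alpha^{r+i}) = -H^i_{-1} v_{-e_i}$ is odd, since $-e_i$ has odd coordinate sum while the Heisenberg mode $H^i_{-1}$ is even. As $\iota$ sends the (even) vacuum to the vacuum and is a homomorphism of VOSAs, it maps products of the odd generators to elements of the matching parity, so that $\iota(\VOA(2r)_\mathrm{ev}) \subseteq (\VOA_{\Zb^r})_\mathrm{ev}$ and $\iota(\VOA(2r)_\mathrm{odd}) \subseteq (\VOA_{\Zb^r})_\mathrm{odd}$. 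Restricting to the even part then gives a map $\iota|_{\VOA(2r)_\mathrm{ev}} : \VOA(2r)_\mathrm{ev} \to (\VOA_{\Zb^r})_\mathrm{ev} = \VOA_{D_r}$, which is injective because $\iota$ is. It is a homomorphism of VOAs because the vertex operation of a VOSA restricts on the even subspace to the vertex operation of the underlying even VOA, and $\iota$ intertwines these by construction. Finally, the Virasoro element $T^{SF}$ of \eqref{eq:SF-stress-tensor} is parity-even, and by Theorem \ref{thm:symp-ferm-in-lattice} its image $\frac12\sum_{i=1}^r(H^i_{-1}H^i_{-1} - H^i_{-2})\one$ lies in the Fock space over $0 \in \Zb^r$, hence in $(\VOA_{\Zb^r})_\mathrm{ev} = \VOA_{D_r}$, where it is precisely the Feigin-Fuchs stress tensor; so conformal vectors match as well.

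The only point requiring a moment's care is the claim that restricting a VOSA embedding to even parts yields a genuine VOA embedding, i.e.\ that no signs coming from the super-structure interfere. On the even--even part, however, all such signs are trivial, so the restriction is an ordinary VOA homomorphism and the statement follows immediately from Theorem \ref{thm:symp-ferm-in-lattice}.
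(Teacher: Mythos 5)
Your proposal is correct and follows exactly the paper's route: the paper simply observes that $(\VOA_{\Zb^r})_\mathrm{ev} = \VOA_{D_r}$ and declares the corollary immediate from Theorem \ref{thm:symp-ferm-in-lattice}, which is precisely the restriction argument you spell out in more detail. The extra verifications you include (parity of the generators' images, matching of the conformal vectors) are sound but are already guaranteed by $\iota$ being a VOSA embedding.
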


We now turn to the relation to holomorphic VOAs. Recall that a VOA is {\em rational} if all its admissible modules are completely reducible,
and a rational VOA $\VOA$ is called {\em holomorphic} if every simple admissible $\VOA$-module is isomorphic to $\VOA$
(see e.g.\ \cite{Dong:2002}).
Lattice VOAs for even self-dual lattices are holomorphic \cite{Dong:1993}.
Via the embedding $\iota$ in Corollary \ref{cor:symp-ferm-in-Dr-lattice}, the lattice VOA $\VOA_{D_r^+}$ for the even self-dual lattice $D_r^+$ ($r \in 8 \Zb_{>0}$) contains $\VOA(2r)_{\mathrm{ev}}$ as a sub-VOA. Thus:

\begin{corollary}\label{cor:symp-even-in-lattice}
For $r \in 8 \Zb_{>0}$, the even part of the symplectic fermion VOA has a holomorphic extension, namely the lattice VOA of the even self-dual lattice $D_r^+$ with 
	Virasoro element
 as in Theorem \ref{thm:symp-ferm-in-lattice}.
\end{corollary}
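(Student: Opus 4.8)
The plan is to obtain the holomorphic extension as a composition of embeddings, using the VOA embedding of Corollary \ref{cor:symp-ferm-in-Dr-lattice} together with the evident inclusion of lattice VOAs induced by $D_r \subset D_r^+$, and then to invoke holomorphicity of the even self-dual lattice VOA. The only genuinely delicate point is that holomorphicity must survive the passage from the standard conformal vector to the shifted Feigin--Fuchs one, and that is where I would concentrate the argument.

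First I would record the lattice-theoretic input: $D_r^+$ as defined around \eqref{eq:D+-def} is even self-dual precisely when $r \in 8\Zb_{>0}$ (see \cite{SPLAG}), so by \cite{Dong:1993} the lattice VOA $\VOA_{D_r^+}$ with its \emph{standard} conformal vector is holomorphic. Next, the lattice inclusion $D_r \subset D_r^+$ induces an inclusion of the corresponding Fock-space decompositions and hence a homomorphism of VOAs $\VOA_{D_r} \to \VOA_{D_r^+}$. The key compatibility is that the shifted conformal vector $T^{FF} = \tfrac12 \sum_{i=1}^r (H^i_{-1}H^i_{-1} - H^i_{-2})\one$ is built solely from the Heisenberg modes $H^i_m$, which depend only on the ambient $\Rb^r$ and not on the choice of lattice; it therefore lies in both $\VOA_{D_r}$ and $\VOA_{D_r^+}$ and is respected by the inclusion. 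Composing with the embedding $\iota : \VOA(2r)_{\mathrm{ev}} \to \VOA_{D_r}$ of Corollary \ref{cor:symp-ferm-in-Dr-lattice} then yields a conformal embedding $\VOA(2r)_{\mathrm{ev}} \to \VOA_{D_r^+}$, where the target carries the shifted conformal vector of Theorem \ref{thm:symp-ferm-in-lattice}.

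The main obstacle, as I see it, is justifying that $\VOA_{D_r^+}$ remains holomorphic after replacing its standard conformal vector by $T^{FF}$. The point I would emphasise is that changing the conformal vector does not touch the underlying vertex algebra or its vertex operator map $Y(-,x)$; it only alters the $L_0$-grading, and hence the conformal weights, while leaving the collection of (simple) modules of the vertex algebra unchanged. Consequently $\VOA_{D_r^+}$ with conformal vector $T^{FF}$ still has $\VOA_{D_r^+}$ as its unique simple module and stays holomorphic. Making this rigorous at the level of admissible modules and the rationality hypothesis is exactly the content of the detailed study of lattice VOAs with shifted stress tensor in \cite{Dong:2006}, which I would cite for the technical foundation. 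Granting this, $\VOA_{D_r^+}$ equipped with $T^{FF}$ is the sought holomorphic extension of $\VOA(2r)_{\mathrm{ev}}$, completing the argument.
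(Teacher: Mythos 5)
Your argument is essentially the paper's own: compose the embedding $\VOA(2r)_{\mathrm{ev}} \to \VOA_{D_r}$ of Corollary \ref{cor:symp-ferm-in-Dr-lattice} with the inclusion $\VOA_{D_r} \subset \VOA_{D_r^+}$ and invoke holomorphicity of even self-dual lattice VOAs from \cite{Dong:1993}. You are in fact somewhat more careful than the paper, which passes over the question of whether holomorphicity survives the replacement of the standard conformal vector by $T^{FF}$ and only cites \cite{Dong:2006} in the introduction; your explicit treatment of that point is a welcome addition but does not change the route.
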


This corollary explains the relation between projectively modular invariant combinations of characters of $\VOA(d)_{\mathrm{ev}}$ and the lattice $D^+_{d/2}$ made in Section \ref{sec:char-rel-so(d)1}. 

\section{Braided monoidal structure on representations}\label{sec:brmon-on-rep}

For the convenience of the reader, in this section we restate the relevant results from \cite{Davydov:2012xg,Runkel:2012cf}, see also the summaries in \cite{Davydov:2013ty,Gainutdinov:2015lja}.
Let $\h$ and $(-,-)$ be as in the previous section.

In addition to $\widehat\h$ we will need the ``twisted mode algebra'' $\widehat\h_\mathrm{tw}$. It is defined as $\widehat\h_\mathrm{tw} =  \h \otimes_\Cb (t^{\frac12} \Cb[t^{\pm1}]) \oplus \Cb K$. We again write $a_m = a \otimes t^m$, where now $m \in \Zb + \frac12$. The Lie bracket is as in \eqref{eq:SF-liebracket}, but now for $m,n \in \Zb + \frac12$. Let
\be
	\Rep^\mathrm{fd}_{\flat,1}(\widehat\h)
	\quad \text{and} \quad
	\Rep^\mathrm{fd}_{\flat,1}(\widehat\h_\mathrm{tw})
\ee
be the categories of all $\widehat\h$-modules and $\widehat\h_\mathrm{tw}$-modules in super-vector spaces, respectively, which in addition satisfy:
\begin{itemize}
\item[-] $K$ acts as 1.
\item[-] The modules are bounded below. (An $\widehat\h_\mathrm{(tw)}$-module $M$ is {\em bounded below} if for each $u \in M$ there is an $N>0$ such that $a^1_{m_1} \cdots a^k_{m_k}u=0$ for all $a^1,\dots,a^k  \in \h$ whenever $m_1+\cdots+m_k > N$.)
\item[-] The modules have finite-dimensional spaces of ground states. (The {\em space of ground states} of a module $M$ is the subspace on which all $a_n$ with $a \in \h$ and $n>0$ act as zero.)
\end{itemize}
Because they are bounded below, the modules in $\Rep^\mathrm{fd}_{\flat,1}(\widehat\h_\mathrm{(tw)})$ carry an action of the \hbox{Virasoro} algebra. 
Let $\{ \alpha^i \,|\,i=1,\dots,d\}$ be a basis of $\h$ and let $\{\beta^i\}$ be the dual basis with respect to $(-,-)$, such that $(\alpha^i,\beta^j)=\delta_{ij}$.
The Virasoro action resulting from the stress tensor \eqref{eq:SF-stress-tensor} is, for $m \in \Zb$, $m \neq 0$,
\be
  L_m = \tfrac12 \sum_{k \in \Zb-\delta} \sum_{j=1}^d \beta^j_k \alpha_{m-k}^j 
~~,\quad
L_0 = 
\sum_{k \in \Zb_{>0} - \delta} \sum_{j=1}^d \beta_{-k}^j \alpha_k^j
+ \begin{cases} 
\tfrac12\sum_{j=1}^d\beta^j_0 \alpha_{0}^j  & ;~\delta=0 \\
-\tfrac{d}{16} 
& ;~\delta=\tfrac12 \quad ,
\end{cases} 
\label{eq:virasoro-action}
\ee
where $\delta=0$ in the untwisted sector and $\delta=\tfrac12$ in the twisted sector (see e.g.\ \cite[Sect.\,1.9]{Frenkel:1987} for the purely even case). Note that for $m \neq 0$ there is no need for normal ordering in $L_m$, since the modes $\beta^j_k$ and $\alpha_{m-k}^j$ anti-commute.

\medskip

Given a $\h$-representation $A \in  \Rep^\mathrm{fd}(\h)$ and a super-vector space $B \in \svect^\mathrm{fd}$, we denote the corresponding induced $\widehat\h$- and $\widehat\h_\mathrm{tw}$-representations as
\be\label{eq:induced-mod-def}
	\widehat A := U(\widehat\h) \otimes_{\widehat\h_{\ge 0} \oplus \Cb K} A
	\quad , \quad
	\widehat B := U(\widehat\h_\mathrm{tw}) \otimes_{\widehat\h_{\mathrm{tw},>0} \oplus \Cb K} B \ ,
\ee
where $K$ is taken to act as $1$ on $A$ and $B$. 
For example, the induced module of the trivial $\h$-module $\Cb^{1|0}$ is just the symplectic fermion VOSA from \eqref{eq:free-superbos-VOSA} 
$\widehat{\Cb^{1|0}} = \VOA(\h)$.
It is shown in \cite[Thm.\,2.4\,\&\,2.8]{Runkel:2012cf} (following \cite[Sect.\,1.7]{Frenkel:1987} and \cite[Sect.\,3.5]{Kac:1998}) that induction provides equivalences of $\Cb$-linear categories
\be\label{eq:SF-iso-to-Rep}
	\Rep^\mathrm{fd}(\h) \xrightarrow{~\sim~}
	\Rep^\mathrm{fd}_{\flat,1}(\widehat\h)
	\quad \text{and} \quad
	\svect^\mathrm{fd} \xrightarrow{~\sim~}
	\Rep^\mathrm{fd}_{\flat,1}(\widehat\h_\mathrm{tw}) \ .
\ee
Here, $\Rep^\mathrm{fd}(\h)$ is the category of finite-dimensional representations of $\h$ -- understood as an abelian Lie super-algebra -- in super-vector spaces, and $\svect^\mathrm{fd}$ is the category of finite-dimensional super-vector spaces. 
We will abbreviate
\be
	\SF_0(\h) = \Rep^\mathrm{fd}(\h) 
	~~,\quad
	\SF_1(\h) = \svect^\mathrm{fd}
	~~,\quad
	\SF(\h) = \SF_0(\h) \oplus \SF_1(\h) \ .
\ee
For an object $C \in \SF(\h)$ which decomposes as $C = C_0 \oplus C_1$, $C_i \in \SF(\h)_i$, $i=0,1$, we set  $\widehat C :=  \widehat C_0\oplus \widehat C_1$.

The components $\SF_0(\h)$ and $\SF_1(\h)$ have two simple objects each, which we will denote as
\be\label{eq:SF-simples}
	\one , \Pi\one \in \SF_0(\h)
	\quad , \quad
	T , \Pi T \in \SF_1(\h) \ .
\ee
Here $\one = \Cb^{1|0}$ with trivial $\h$-action, $\Pi$ is parity exchange so that $\Pi\one = \Cb^{0|1}$. For $\SF_1(\h) = \svect^\mathrm{fd}$ we take the same super-vector spaces: $T = \Cb^{1|0}$, $\Pi T = \Cb^{0|1}$.

\medskip

Let $S(\h)$ be the symmetric algebra of $\h$ in $\svect$. 
Considered as a vector space, $S(\h)$ is the exterior algebra of $\h$
and
$\dim S(\h) = 2^d$. Since $\h$ is an abelian Lie super-algebra, $S(\h)$ coincides with the universal enveloping algebra $U(\h)$ and is a commutative and cocommutative Hopf algebra in $\svect$. Moreover
\be
	\Rep^\mathrm{fd}(\h) = \Rep^\mathrm{fd}(S(\h)) \ .
\ee
Denote its structure maps of $S(\h)$ by $\mu_S$ (multiplication), $\Delta_S$ (coproduct), $\eps_S$ (counit), and $S_S$ (antipode). For example, for $a \in \h$ the coproduct and antipode are $\Delta_S(a) = a \otimes \one + \one \otimes a$ and $S_S(a) = -a$.

It is shown in \cite[Sect.\,4.5\,\&\,5.2]{Davydov:2012xg} that $\SF(\h)$ is a ribbon category. We will recall here the tensor product, associator, braiding and ribbon twist, but we omit the rigid structure as its explicit form is not needed. 

In order to compare to \cite{Davydov:2012xg} and \cite{Runkel:2012cf} note that the pairing used in \cite{Runkel:2012cf} is called $(-,-)$ here and is natural from the mode algebra point of view, see \eqref{eq:SF-liebracket}. The pairing used in \cite{Davydov:2012xg} will be denoted by $(-,-)_{\SF}$ and is chosen to absorb the factors of $\pi i$ in the associator and braiding of $\SF(\h)$ appearing in \cite{Runkel:2012cf}. The two pairings are related by
\be\label{eq:two-pairings}
	(-,-) ~=~ \pi i \, (-,-)_{\SF} \ .
\ee
Denote by $C$ the copairing dual to $(-,-)_{\SF}$. That is, if $\{a_i \,|\, i = 1,\dots,d\}$ is a basis of $\h$ and $\{b_i \,|\, i = 1,\dots,d\}$ is the dual basis in the sense that $(a_i,b_j)_{\SF} = \delta_{i,j}$, then
\be\label{eq:def-copairing}
	C = \sum_{i=1}^{d} b_i \otimes a_i \ .
\ee
We will abbreviate $\hat C = \mu_S(C) = \sum_{i=1}^{d} b_i a_i$ and set
\be
	\gamma = \exp(C) \,\in\, S(\h)\otimes S(\h)
	\quad , \quad
	\sigma = \exp(\tfrac12 \hat C) \,\in\, S(\h) \ .
\ee
Recall that $S(\h)$ is also $\Zb$-graded (refining the $\Zb_2$-grading of $\svect$), with non-zero graded components in degrees $0,1,\dots,d$. The graded components in degree $0$ and $d$ are one-dimensional. We will make use of a particular cointegral $\lambda$ on $S(\h)$, which is non-zero only in the top degree (as are all cointegrals on $S(\h)$) and is normalised such that 
\be\label{eq:lambda-def}
	\lambda\big(\hat{C}^{\frac d2}\big) ~=~ 2^{\frac d2} \, \big(\tfrac{d}{2}\big)!  \ .
\ee

We write $* : \SF(h) \times \SF(h) \to \SF(h)$ for the tensor product functor and will reserve the notation $\otimes$ for the tensor product in $\svect$ and for that of $S(\h)$-modules in $\svect$. The tensor product $*$ is $\Zb_2$-graded, and depending on which sector $X,Y \in \SF(\h)$ are chosen from, it is defined to be:
\begin{equation}\label{eq:*-tensor}
X \ast Y ~=~
\left\{\rule{0pt}{2.8em}\right.
\hspace{-.5em}\raisebox{.7em}{
\begin{tabular}{ccll}
   $X$ & $Y$ & $X \ast Y$ &
\\
$0$ & $0$ & $X \otimes Y$ & $\in~\SF_0(\h)$
\\
 $0$ & $1$ & $X \otimes Y$ & $\in~\SF_1(\h)$
\\
 $1$ & $0$ & $X \otimes Y$ & $\in~\SF_1(\h)$
\\
$1$ & $1$ & $S(\h) \otimes X \otimes Y$ & $\in~\SF_0(\h)$
\end{tabular}}
\end{equation}
Here, sector by sector, the meaning of $\otimes$ is as follows. In sector $00$, it is the tensor product in $\Rep^\mathrm{fd}(S(\h))$, that is, $S(\h)$ acts on $X \otimes Y$ via the coproduct. 
In sectors $01$ and $10$, one forgets the $S(\h)$ action and the tensor product is just that of super-vector spaces. In sector 11, both tensor products are of  super-vector spaces, and the $S(\h)$ action is given by the left regular action on $S(\h)$.

\medskip

To describe the associator, braiding and ribbon twist, we need to 
fix a $\beta \in \Cb$ such that
\be\label{eq:beta4}
	\beta^4 = (-1)^{d/2} \ .
\ee
For symplectic fermions one has to choose $\beta = e^{-\pi i \, d/8}$, but the ribbon structure can be defined for all four possibilities of $\beta$, and we will keep it arbitrary (but fixed) unless we indicate otherwise. 
We remark that the four braided monoidal structures on $\SF(\h)$ (depending on $\beta$) we will now describe form an orbit under the action of the third abelian group cohomology $H^3_\text{ab}(\Zb_2,U(1)) \cong \Zb_4$, as is expected for a $\Zb_2$-graded tensor product functor.

The associator, braiding and twist
\be
	\alpha_{X,Y,Z} : X*(Y*Z) \to (X*Y)*Z
	~~,~~
	c_{X,Y} : X*Y \to Y*X
	~~,~~
	\theta_X : X \to X
\ee
depend on the sectors $X,Y,Z$ are taken from. The associator 
is trivial (i.e.\ that of $\svect$) in sectors 000, 001 and 100. The non-trivial components are, in string-diagram notation (read from bottom to top, see \cite[Sect.\,2]{Davydov:2012xg} for our conventions)
\be\label{eq:SF-assoc}
\begin{tabular}{c@{\hspace{0.9em}}|@{\hspace{0.9em}}c@{\hspace{0.9em}}|@{\hspace{0.9em}}c@{\hspace{0.9em}}|@{\hspace{0.9em}}c@{\hspace{0.9em}}|@{\hspace{0.9em}}c}
\small 010 & \small 011 & \small 101 & \small 110 & \small 111 \\
\hline
&&&& \\[-.5em]
\ipic{assoc010}~ &
~\ipic{assoc011}~ &
~\ipic{assoc101}~ &
~\ipic{assoc110}~ &
~\ipic{assoc111} 
\end{tabular}
\ee
Here, $S := S_S$ is the antipode of $S(\h)$, $\phi : S(\h) \to S(\h)$ is given by 
\be
\phi = \beta^2 \, (\id \otimes (\lambda \circ \mu_S)) \circ (\gamma^{-1} \otimes \id)\ .
\ee 
The map $\phi$ is an isomorphism, and it maps the homogeneous subspace of $\Zb$-degree $k$ to that of degree $d-k$.

For a super-vector space $X$, we write $\omega_X : X\to X$ for its parity-involution. 
The braiding is given by
\be\label{eq:SF-braid}
\begin{tabular}{c@{\hspace{1.4em}}|@{\hspace{1.4em}}c@{\hspace{1.4em}}|@{\hspace{1.4em}}c@{\hspace{1.4em}}|@{\hspace{1.4em}}c}
\small 00 & \small 01 & \small 10 & \small 11  \\
\hline
&&& \\[-.5em]
\ipic{braid00} &
\ipic{braid10} &
\ipic{braid01} &
$\beta ~\cdot~$ \ipic{braid11} 
\end{tabular}
\ee
and finally the ribbon twist is
\be\label{eq:SF-twist}
\raisebox{.7em}{
\begin{tabular}{ccl}
 $X$ & $\theta_X : X \to X$
\\[.3em]
 $0$ & $\sigma^{-2}.(-)$
\\[.3em]
 $1$ & $\beta^{-1} \cdot \omega_X$
\end{tabular}}
\ee
On the four simple objects of $\SF(\h)$ the ribbon twists thus are
\be
	\theta_\one = \id
	~~,~~
	\theta_{\Pi\one} = \id
	~~,~~
	\theta_T = \beta^{-1} \id
	~~,~~
	\theta_{\Pi T} = - \beta^{-1} \id \ .
\ee
To relate this to the conformal weights given in \eqref{eq:V0-irrep-weights}, we note that our convention for the twist isomorphism on modules of a VOA is $\theta_U = e^{-2 \pi i L_0}.(-)$, and that we need to choose 
\be
	\beta = \beta_{SF} := e^{- \pi i d/8} \ .
\ee

Symplectic automorphisms $g \in Sp(\h)$ give rise to ribbon auto-equivalences $G_g$ of $\SF(\h)$ \cite[Rem.\,5.4]{Davydov:2012xg}. Namely, denote by $S(g)$ the automorphism of $S(\h)$ induced by $g$. On $\SF_0(\h)$, $G_g$ acts by pullback $S(g^{-1})^*$ of $S(\h)$-modules. On $\SF_1(\h)$, $G_g$ is the identity functor. The tensor structure on $G_g$ is $(G_g)_{A,B} : G_g(A * B) \to G_g(A)*G_g(B)$ with $(G_g)_{A,B} = \id$ unless both $A,B \in \SF_1(\h)$. For $A,B \in \SF_1(\h)$ we have
\be
	(G_g)_{A,B} : S(\h) \otimes A \otimes B \to  S(\h) \otimes A \otimes B
	\quad , \quad (G_g)_{A,B} = S(g) \otimes \id_A \otimes \id_B \ ,
\ee
see \cite[Sec.\,3.5]{Davydov:2012xg} for details. Note that acting with $S(g)$ is required to turn $(G_g)_{A,B}$ into an $S(\h)$-module morphism.

\section{Commutative algebras and Lagrangian subspaces}\label{sec:commalg}

Fix a $d$-dimensional purely odd complex super-vector space $\h$ with non-degenerate super-symmetric pairing $(-,-)$ as in Section \ref{sec:SF-VOSA}. 
In this and the next section we will work only in the ribbon category $\SF(\h)$
and will not make use of the symplectic fermion VOSA.

\medskip

An {\em algebra} (or {\em monoid}) in a monoidal category $\C$ is an object $A$ together with a morphism $\mu : A \otimes A \to A$ (the {\em multiplication}) and a morphism $\eta : \one \to A$ (the {\em unit}). They have to satisfy
\begin{align}
&\mu \circ (\id_A \otimes \mu) =
\mu \circ (\mu \otimes \id_A) \circ \alpha_{A,A,A} \ ,
&&\text{(associative)}
\nonumber
\\
&\mu \circ (\id_A \otimes \eta) = \rho_A \quad , \quad
\mu \circ (\eta \otimes \id_A) = \lambda_A \ ,
&&\text{(unital)}
\label{eq:unit+assoc-def}
\end{align}
where $\alpha,\lambda,\rho$ denote the associator and left and right unit constraint. Suppose that $\C$ is in addition braided with braiding $c$. Then $A$ is called {\em commutative} if $\mu \circ c_{A,A} = \mu$.

A {\em (left) $A$-module} is an $M \in \C$ together with a morphism $\rho : A \otimes M \to M$ satisfying associativity and unit conditions similar to \eqref{eq:unit+assoc-def}. The category of $A$-modules and $A$-module intertwiners will be denoted by ${_A\C}$. 
If $\C$ is braided and $A$ is commutative, we call an $A$-module $(M,\rho)$ {\em local} if $\rho \circ c_{M,A} \circ c_{A,M} = \rho$. The full subcategory of local modules will be called ${^\mathrm{loc}_A\C}$. 
	Local modules were introduced in \cite{Pareigis:1995} and called ``dyslectic'' there (see e.g.\ \cite{Frohlich:2003hm} for more details and references).

\medskip

The following is a wish list for the algebras $H$ in $\SF(\h)$ we are looking for:
\begin{enumerate}
\item[(W1)] 
In the decomposition $H = H_0 \oplus H_1$ with $H_i \in \SF_i(\h)$ we have $H_1 \neq \{0\}$.
\item[(W2)] $H$ has trivial twist, $\theta_H = \id_H$.
\item[(W3)] $H$ is commutative.
\item[(W4)] 
There exists $\kappa : H \to \one$ such that $\kappa \circ \mu : H * H \to \one$ is a non-degenerate pairing.
\item[(W5)] $H$ is simple as a left module over itself.
\end{enumerate}

W4 says that $H$ is a Frobenius algebra, and W2 and W3 imply that it is symmetric.

\begin{theorem}\label{thm:H-unique}
If $\beta=1$, then 
isomorphism classes of algebras $H$ satisfying W1--W5 are
	in bijection with
Lagrangian subspaces $\f\subset\h$, $H=H(\f)$.
The class in the Grothendieck ring of $\SF(\h)$ is 
	$[H(\f)] = 2^{\frac d2-1}\big\{[\one]+[\Pi\one]\big\} + [T]$, independent of $\f$.
	
\smallskip\noindent
If $\beta \neq 1$, conditions W1--W5 have no solution.
\end{theorem}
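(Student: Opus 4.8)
The plan is to treat an algebra $H$ satisfying W1--W5 as a concrete object built from the explicit data of $\SF(\h)$, and to extract the combinatorial constraints from the wish list one by one. First I would write $H = H_0 \oplus H_1$ with $H_0 \in \SF_0(\h)$ a finite-dimensional $S(\h)$-module and $H_1 \in \SF_1(\h)$ a finite-dimensional super-vector space, and unpack the multiplication into its graded components. The $\ast$-tensor product \eqref{eq:*-tensor} tells me that $H_0 \ast H_0 \to H_0$, $H_0 \ast H_1 \to H_1$, $H_1 \ast H_0 \to H_1$ all land in the ``naive'' tensor sector, while the $11$-component $H_1 \ast H_1 = S(\h) \otimes H_1 \otimes H_1 \to H_0$ carries the left-regular $S(\h)$-action. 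So the algebra structure is: an ordinary commutative $S(\h)$-module-algebra $H_0$, a module $H_1$ over it, and a pairing-like map $H_1 \otimes H_1 \to H_0$ that is $S(\h)$-linear for the regular action. The unit forces $\one \hookrightarrow H_0$, and I expect W1 together with simplicity W5 to force $H_0$ to be cyclic, generated from $H_1 \ast H_1$.

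The decisive step is extracting the $\beta$-constraint, and this is where I would look first. The braiding in sector $11$ carries an overall scalar $\beta$ (see \eqref{eq:SF-braid}), and the twist on the odd sector is $\theta_X = \beta^{-1}\omega_X$ (see \eqref{eq:SF-twist}). Commutativity W3 applied to the $11$-component of $\mu$ therefore produces a symmetry condition twisted by $\beta$ (and by the parity involution $\omega$); trivial twist W2 on $H_1$ forces $\beta^{-1}\omega_{H_1} = \id$, i.e.\ $H_1$ must be purely even and $\beta = 1$, or purely odd and $\beta = -1$, with the $\beta = -1$ option then killed by re-examining commutativity together with the associator \eqref{eq:SF-assoc} in the mixed sectors. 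The cleanest route is: W2 restricted to the odd part reads $\beta^{-1}\omega_{H_1} = \id_{H_1}$, so $H_1$ is homogeneous and $\beta = \pm 1$; if $\beta=-1$ then $H_1$ is purely odd, and I would show that the associativity constraint coming from the $111$ associator in \eqref{eq:SF-assoc} (which involves $\phi$ and hence a factor $\beta^2$) becomes inconsistent with a nonzero unital even part, ruling it out. This forces $\beta = 1$ and establishes the ``only if'' half: no solutions unless $\beta = 1$.

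With $\beta = 1$ fixed, $H_1$ is a purely even space and the pairing $H_1 \otimes H_1 \to H_0$ is symmetric; combined with the Frobenius nondegeneracy W4 and the cointegral $\lambda$ normalisation \eqref{eq:lambda-def}, I would identify the top-degree component and show the multiplication $H_1 \ast H_1 \to H_0$ is, after choosing $\kappa$, governed by a symmetric bilinear form on $H_1$. The $S(\h)$-linearity in sector $11$ says this form intertwines the regular $S(\h)$-action, which pins $H_0$ down to (a quotient/sub of) $S(\h)$ itself. The Lagrangian subspace $\f$ enters precisely as the radical data: I would show that a choice of $H$ is equivalent to choosing on which half of $\h$ the fermionic generators act trivially, i.e.\ a maximal isotropic $\f \subset \h$ with respect to $(-,-)$, and that $H_0 \cong S(\h/\f)$ or equivalently $S(\f)$ as the even part. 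Tracking dimensions then yields $\dim H_0 = 2^{d/2}$ split evenly into parity-even and parity-odd pieces and $\dim H_1 = 1$, giving the Grothendieck class $[H(\f)] = 2^{\frac d2 - 1}\{[\one] + [\Pi\one]\} + [T]$.

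Finally, to close the bijection I would verify simplicity W5 holds for each such $H(\f)$ and that distinct Lagrangian subspaces give non-isomorphic algebras — the latter by extracting $\f$ back from $H(\f)$ as the kernel of the $S(\h)$-action on the generator of $H_1$, which is an isomorphism invariant. Conversely, for each Lagrangian $\f$ I must check that the proposed structure maps genuinely satisfy the twisted associativity and commutativity axioms of $\SF(\h)$, i.e.\ that the associator components $010, 011, 101, 110, 111$ of \eqref{eq:SF-assoc} are compatible with my candidate $\mu$. I expect the main obstacle to be exactly this associativity check in the $111$ sector, where the map $\phi$ and the cointegral $\lambda$ intertwine the $S(\h)$-module structure nontrivially: verifying the pentagon-constrained compatibility of $\mu$ with $\phi$ there is the delicate computation, and getting the normalisation \eqref{eq:lambda-def} of $\lambda$ to match the Frobenius form from W4 is where I anticipate spending most of the effort.
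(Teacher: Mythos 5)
Your overall architecture matches the paper's: decompose $H = L \oplus E$ by sector, read off $\beta = \pm 1$ and the homogeneity of $E$ from W2, and recover $\f$ as the annihilator of the $S(\h)$-action on the even part. But two load-bearing steps are missing. First, you assert $\dim H_1 = 1$ by ``tracking dimensions'', yet this is the hardest single step of the argument. The paper proves it (Proposition \ref{prop:dimE=1|0}) by evaluating the associativity condition A110 on a carefully chosen element $a_1\cdots a_{d/2} \otimes u \otimes m(e\otimes f)$ to show that the image of $m : E \otimes E \to L$ is one-dimensional, and then invoking a general Frobenius-algebra lemma (Lemma \ref{lem:mult-of-nonzero-is-nonzero}): if $A$ is Frobenius and simple over itself, the product of two non-zero maps into $A$ is non-zero, so $\dim E \ge 2$ would produce $p,q$ with $\mu_{11}\circ(p*q)=0$, contradicting W5. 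Nothing in your sketch supplies this; a symmetric $S(\h)$-equivariant pairing on a higher-dimensional $H_1$ is not excluded by W3 and W4 alone. Second, and relatedly, you never explain why the annihilator $\f$ must be \emph{Lagrangian} (isotropic of dimension exactly $\frac d2$) rather than an arbitrary subspace. In the paper this rests on the combinatorial Lemma \ref{lem:C-trivial-only-above-d2-2} (a non-zero $u\in S^k(\h)$ with $\hat C u = 0$ must have $k > \frac d2 - 2$), which together with W2 forces the generator $m$ into degree $\ge \frac d2$, and on evaluating A101 on $1\otimes gm$, which yields $b_{n+1},\dots,b_d\in\f$ and hence both the dimension count $\dim\f=\frac d2$ and the isotropy $(a_i,b_j)=0$. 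These are the only places where the symplectic form actually enters; without them the bijection with Lagrangian subspaces is unsupported.

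Your proposed mechanism for excluding $\beta=-1$ --- an inconsistency in the $111$ associativity --- is also not the one that works. The paper excludes it only \emph{after} establishing $\dim E = 1$: for a purely odd one-dimensional $E$, the commutativity constraint $m = m\circ\sigma^{\mathrm{s.v.}}_{E,E}$ from W3 forces $m=0$, contradicting W4 (and Lemma \ref{lem:mult-of-nonzero-is-nonzero}). For $\dim E \ge 2$ a purely odd $E$ admits non-zero antisymmetric pairings, so the exclusion of $\beta=-1$ genuinely depends on the dimension bound you have not proved.
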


Note that  by \eqref{eq:beta4}, $\beta=1$ is possible only if $d \in 4\Zb$, 
and that W1 gets strengthened to $H_1 \cong T$.

The theorem will be proved in Sections \ref{sec:comm-twist}--\ref{sec:ex-unique}. We will assume that we are given an algebra $H$ satisfying W1--W5 and deduce its properties. As a result of this analysis, we will establish uniqueness and existence.

\subsection{Commutative multiplication and trivial twist}\label{sec:comm-twist}

We will write
\be 
  H = L \oplus E
  \qquad,\qquad  \text{where} \quad L \in \SF_0(\h) ~~,~~ E \in \SF_1(\h) \ .
\ee
By W1, $E$ is non-zero.

\begin{lemma}\label{lem:twist=id}
The following are equivalent:
\begin{enumerate}
\item $H$ has trivial twist,
\item $\hat C$ acts as zero on $L$ and either
\begin{enumerate}
\item $\beta = 1$ and $E$ is purely even, or
\item $\beta = -1$ and $E$ is purely odd.
\end{enumerate}
\end{enumerate}
\end{lemma}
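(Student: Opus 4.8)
The plan is to read the twist $\theta_H$ off directly from its definition \eqref{eq:SF-twist} and to analyse the two summands of $H = L \oplus E$ separately. Since $\theta$ is a natural family and the canonical inclusions $L,E \hookrightarrow H$ together with the projections are morphisms in $\SF(\h)$, the twist respects the decomposition, so $\theta_H = \theta_L \oplus \theta_E$. Hence $\theta_H = \id_H$ if and only if $\theta_L = \id_L$ and $\theta_E = \id_E$, and I treat these two conditions in turn. By \eqref{eq:SF-twist}, on the sector-$0$ object $L$ the twist acts as the algebra element $\sigma^{-2} = \exp(-\hat C) \in S(\h)$, while on the sector-$1$ object $E$ it acts as $\beta^{-1}\omega_E$.

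For the $L$-summand, the key observation is that $\hat C = \sum_i b_i a_i$ lies in $\Zb$-degree $2$ of $S(\h)$, so $\hat C^{k}$ lies in degree $2k$; since $S(\h)$ is the exterior algebra on the $d$-dimensional space $\h$ and hence vanishes above degree $d$, the element $\hat C$ is nilpotent, and in particular it acts nilpotently on the finite-dimensional module $L$. Writing $N$ for this nilpotent action, $\theta_L = \exp(-N)$. If $\hat C$ acts as zero then clearly $\theta_L = \id_L$. Conversely, $\exp(-N) - \id_L = N \cdot \big({-}\id_L + \tfrac12 N - \tfrac16 N^2 + \cdots \big)$; since $N$ is nilpotent the bracketed operator differs from $-\id_L$ by a nilpotent and is therefore invertible, so $\exp(-N) = \id_L$ forces $N=0$. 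Thus $\theta_L = \id_L$ is equivalent to $\hat C$ acting as zero on $L$.

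For the $E$-summand, split $E = E_{\bar0} \oplus E_{\bar1}$ into even and odd parts, on which $\omega_E$ acts as $+\id$ and $-\id$ respectively. The condition $\theta_E = \beta^{-1}\omega_E = \id_E$ then forces $\beta = 1$ on $E_{\bar0}$ and $\beta = -1$ on $E_{\bar1}$. Because $\beta$ is a single fixed scalar and $E$ is non-zero by W1, exactly one of the two parts can survive: either $\beta=1$ and $E$ is purely even, or $\beta=-1$ and $E$ is purely odd, matching cases (a) and (b). Conversely, each of these two cases yields $\theta_E = \id_E$ by the same substitution. Combining the two summands gives the stated equivalence. The only point requiring more than a direct substitution into \eqref{eq:SF-twist} is the nilpotency of $\hat C$ and the resulting injectivity of $\exp$ on nilpotent endomorphisms; I expect this to be the sole (and minor) technical step.
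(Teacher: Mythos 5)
Your proposal is correct and follows essentially the same route as the paper: reading off $\theta_L=\sigma^{-2}.(-)=\exp(-\hat C).(-)$ and $\theta_E=\beta^{-1}\omega_E$ from \eqref{eq:SF-twist}, using nilpotency of $\hat C$ to conclude that $\exp(-\hat C)$ acts as the identity iff $\hat C$ acts as zero, and using $E\neq 0$ to force $E$ purely even with $\beta=1$ or purely odd with $\beta=-1$. You merely spell out the injectivity of $\exp$ on nilpotents, which the paper leaves implicit.
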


\begin{proof}
The twist on $\SF(\h)$ was given in  \eqref{eq:SF-twist}. 
Since $E$ is non-zero by W1, and since $\theta_E = \beta^{-1} \,\omega_E$, the twist can only be trivial if $E$ is either purely even or purely odd. 
Since $\sigma^{-2} = \exp(-\hat C)$ and $\hat C$ is nilpotent, $\sigma^{-2}$ acts as the identity on $L$ if and only if $\hat C$ acts as zero.
\end{proof}

In simplifying expressions involving the parity of $E$, such as the braiding in $\svect$, we will use
	W2 and
the above lemma to replace
\be 
	\omega_E = \beta \, \id_E \qquad \text{where} \quad \beta = \pm 1 \ .
\ee

Next we turn to the multiplication map $\mu : H \ast H \to H$. It
has four components:
\be
\mu_{00} : L \ast L \to L
~~,\quad
\mu_{01} : L \ast E \to E
~~,\quad
\mu_{10} : E \ast L \to E
~~,\quad
\mu_{11} : E \ast E \to L
\ .
\ee 
We will express $\mu_{01}$ and $\mu_{10}$ in terms of even linear maps $\chi^{l,r} : L \to \End(E)$ via
\be\label{eq:mu01-10}
\mu_{01}(a \otimes e) = \chi^l(a)e
\quad , \quad
\mu_{10}(e \otimes a) = \chi^r(a)e
\qquad , \quad a \in L~~,~~ e \in E \ .
\ee
For $\mu_{11}$ note that $E*E = S(\h) \ot E \ot E$. Since $\mu_{11}$ is a map of $S(\h)$-modules, it is fixed uniquely by the image of $1 \in S(\h)$. Let $m : E \ot E \to L$ be given by $m(e \ot f) = \mu_{11}(1\ot e \ot f) \in L_\mathrm{ev}$. Then, for all $g \in S(\h)$,
\be\label{eq:mu_11-def-via-m}
	\mu_{11}(g \ot e \ot f) = g.m(e \ot f) \ .
\ee
Next we look at W3, $\mu \circ c_{H,H}=  \mu$. For sector 00, by \eqref{eq:SF-braid} this is equivalent to
\be\label{eq:mu00-commutative}
  \mu_{00}(a \otimes b) = \mu_{00} \circ \sigma^{\mathrm{s.v.}}_{L,L}(\gamma^{-1}.(a\otimes b))
  \qquad\qquad \text{for all}~~a,b \in L \ .
\ee
Here, $\sigma^{\mathrm{s.v.}}_{U,V} : U \otimes V \to V \otimes U$ denotes the symmetric braiding in $\svect$. Using Lemma \ref{lem:twist=id} one checks that for the other components of $\mu$, commutativity is equivalent to
\be\label{eq:mu-comm-01-10-11}
 \chi^l = \chi^r
 \qquad , \qquad m = m \circ \sigma^{\mathrm{s.v.}}_{E,E} \ .
\ee
We will write $\chi := \chi^l = \chi^r$. Note that $\chi$ is non-zero only on $L_\mathrm{ev}$. This will allow us to avoid a number of parity signs below.

\medskip

We denote the $\Zb$-degree $k$ subspace of $S(\h)$ by $S^k(\h)$, and write 
$S^{\ge n}(\h) = \bigoplus_{k=n}^d S^k(\h)$.
The following lemma will be important later:

\begin{lemma}\label{lem:C-trivial-only-above-d2-2}
Let $u \in S^k(\h)$
	be nonzero. 
If $\hat Cu=0$, then $k > \frac d2-2$.
\end{lemma}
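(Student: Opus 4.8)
The plan is to recognise left multiplication by $\hat C$ on $S(\h)=\Lambda\h$ as a Lefschetz operator and then quote the resulting $\mathfrak{sl}_2$ representation theory. First I would fix a Darboux basis $x_1,\dots,x_{d/2},y_1,\dots,y_{d/2}$ of $\h$ with $(x_i,y_j)_{\SF}=\delta_{ij}$ and $(x_i,x_j)_{\SF}=(y_i,y_j)_{\SF}=0$. A short computation of the dual basis entering \eqref{eq:def-copairing} (the dual of $x_k$ is $y_k$, that of $y_k$ is $-x_k$) then gives $\hat C=-2\sum_{i=1}^{d/2}x_iy_i$, i.e.\ $\hat C$ is a nonzero multiple of the canonical symplectic bivector. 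Hence the operator $L:=\hat C\cdot(-):S^k(\h)\to S^{k+2}(\h)$ is precisely the wedging-with-the-symplectic-form operator on the exterior algebra of a symplectic space.

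Next I would introduce the adjoint contraction operator $\Lambda:=-2\sum_i\partial_{y_i}\partial_{x_i}$, where $\partial_{x_i},\partial_{y_i}$ are the odd contraction derivations dual to the chosen basis, together with the degree operator $H$ acting on $S^k(\h)$ by the scalar $k-\tfrac d2$. A direct check of the (anti)commutators shows that, after the evident rescaling, $(L,\Lambda,H)$ is an $\mathfrak{sl}_2$-triple: $[H,L]=2L$, $[H,\Lambda]=-2\Lambda$, $[L,\Lambda]=H$. This is the standard symplectic Lefschetz action on $\Lambda\h$, so I would either verify it by hand or simply cite it.

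With the $\mathfrak{sl}_2$-module structure in place the statement is immediate. If $u\in S^k(\h)$ is nonzero and $\hat Cu=Lu=0$, then $u$ lies in the weight-$(k-\tfrac d2)$ part of $\ker L$, i.e.\ it is a highest weight vector; since the weight of any highest weight vector in a finite-dimensional $\mathfrak{sl}_2$-module is a non-negative integer, we get $k-\tfrac d2\ge 0$, that is $k\ge\tfrac d2$. Equivalently, the isomorphism $L^{\frac d2-k}:S^k(\h)\xrightarrow{\ \sim\ }S^{d-k}(\h)$ valid for $k\le\tfrac d2$ shows that $L$ is injective on $S^k(\h)$ for $k<\tfrac d2$. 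In particular $k>\tfrac d2-2$, which is the assertion; note we in fact obtain the sharper bound $k\ge\tfrac d2$. As a consistency check, the top isomorphism $L^{d/2}:S^0(\h)\to S^d(\h)$ reproduces $\hat C^{d/2}\neq 0$, matching the normalisation $\lambda(\hat C^{d/2})=2^{d/2}(\tfrac d2)!$ fixed in \eqref{eq:lambda-def}.

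The only real work, and the main obstacle, is the sign and normalisation bookkeeping in verifying $[L,\Lambda]=H$: because $\h$ is purely odd the generators of $S(\h)$ anticommute and the contractions are odd derivations, so one must track Koszul signs carefully to land on $H\propto(\deg-\tfrac d2)$ rather than on some other degree-dependent operator that would fail the $\mathfrak{sl}_2$ relations. Once the triple is confirmed (or the symplectic hard-Lefschetz isomorphism is quoted directly), the rest is pure representation theory and requires no further input from the symplectic-fermion data.
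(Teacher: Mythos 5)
Your argument is correct, and it even yields the sharper conclusion $k\ge\frac d2$, but it proceeds quite differently from the paper. The paper's proof is elementary and combinatorial: after writing $\hat C=-2X$ with $X=a_1a_2+\cdots+a_{d-1}a_d$ in a symplectic basis, it fixes a top form $t$, picks a monomial of $u$ with nonzero coefficient, and shows by an explicit pairing computation that $(a_{\eps_1}\cdots a_{\eps_{d-k}},u)\neq 0$ forces $Xu\neq 0$, the pigeonhole point being that the complementary index set must contain a full pair $\{2n-1,2n\}$ once $k\le\frac d2-2$. You instead recognise $\hat C\cdot(-)$ as the symplectic Lefschetz operator on $\Lambda\h$, build the $\mathfrak{sl}_2$-triple $(L,\Lambda,H)$ with $H=\deg-\frac d2$, and quote that a homogeneous vector killed by the raising operator is a highest weight vector and hence has non-negative weight. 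Both routes are sound: yours is conceptually cleaner, gives the optimal bound $k\ge\frac d2$ (which would even let one drop the parity argument used later in Lemma \ref{lem:m-xi-chi}), but requires the careful Koszul-sign verification of $[L,\Lambda]=H$ that you correctly flag as the main labour (or an appeal to the standard symplectic hard Lefschetz statement); the paper's route is longer to write but self-contained, needs no contraction operators or representation theory, and its intermediate computation of powers of $\hat C$ is reused later (in the verification of condition A111). Your identification of $\hat C=-2\sum_ix_iy_i$ and of the dual basis agrees with the paper's, and your consistency check against the normalisation \eqref{eq:lambda-def} is apt.
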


\begin{proof}
Pick a basis $\{a_i\,|\,i = 1,\dots,d\}$ of $\h$ and consider the element $X = a_1 a_2 + a_3 a_4 + \cdots + a_{d-1} a_d \in S^2(\h)$.
Let  $k \le \frac d2-2$ and $u \in S^k(\h)$, $u \neq 0$,  be given. We will show that $Xu \neq 0$.

Fix a non-zero top form $t : S(\h) \to \Cb$ such that $t(a_1a_2\cdots a_d)=1$. Then \be\label{eq:top-form-pairing}
(g,h) = t(gh)
\ee
is a non-degenerate pairing on $S(\h)$.
Expand $u \in S^k(\h)$ with respect to the above basis as
\be
	u = \sum_{1 \le \delta_1 < \dots < \delta_k \le d} \xi_{\delta_1,\dots,\delta_k}  \,
	a_{\delta_1} \cdots a_{\delta_k} 
	\qquad , \quad \text{where}~~
	\xi_{\delta_1,\dots,\delta_k}  \in \Cb \ .
\ee
Pick a sequence $\delta_1< \dots < \delta_k$ for which $\xi_{\delta_1,\dots,\delta_k} \neq 0$
 and let $1\le \eps_1 < \eps_2 < \dots < \eps_{d-k} \le d$ be its complement in $\{1,2,\dots,d\}$. Then $(a_{\eps_1} \cdots a_{\eps_{d-k}} , u ) = \pm \xi_{\delta_1,\dots,\delta_k} \neq 0$.

Now consider all consecutive pairs $(2n{-}1,2n)$ in the sequence $Q = \{\eps_1,\dots,\eps_{d-k}\}$, i.e.\ let $P = \big\{n \,\big|\, \{2n{-}1,2n\} \subset Q \big\}$. 
The key point in the proof is the observation that since $k \le \frac d2-2$, $P$ must contain at least one element. 

Let $F = Q \setminus \bigcup_{n \in P} \{2n{-}1,2n\}$ be the set $Q$ minus all such pairs and
let $\nu_1 < \dots < \nu_{|F|}$ be the elements of $F$.
We now claim
\be
	a_{\eps_1} \cdots a_{\eps_{d-k}} \, u
	= \tfrac{1}{(|P|)!} \, a_{\nu_1} \cdots a_{\nu_{|F|}} X^{|P|} \, u
	\ .
\ee
To see this, write $Y = \sum_{n \in P} a_{2n{-}1} a_{2n}$ and note that 
	$a_{\nu_1} \cdots a_{\nu_{|F|}} X u = a_{\nu_1} \cdots a_{\nu_{|F|}} Y u$. 
The claim follows from 
\be
Y^{|P|} = (|P|)! \prod_{n \in P} a_{2n{-}1} a_{2n} \ .
\ee
Now we compute
\be
	\pm \xi_{\delta_1,\dots,\delta_k}
	=
	(a_{\eps_1} \cdots a_{\eps_{d-k}} , u )
	= 
	\tfrac{1}{(|P|)!} \, (a_{\nu_1} \cdots a_{\nu_{|F|}} X^{|P|-1},Xu)
\ee
Thus $Xu \neq 0$. 

Note that up to this point we have not made use of the symplectic form $(-,-)_{\SF}$ on $\h$ or of $\hat C$. Assume now that $\{a_i \,|\, i=1,\dots,d\}$ is a symplectic basis of $\h$ such that the dual basis $\{b_i\,|\, i=1,\dots,d\}$ is $b_1=a_2$, $b_2=-a_1$, etc. Then $\hat C = -2 X$, with $X$ as above.
\end{proof}

\subsection{Unit and associativity conditions}

Let $1_H = \eta(1) \in L$.
The unit condition in \eqref{eq:unit+assoc-def} is easily seen to be equivalent to, for all $a \in L$,
\be\label{eq:1L-unit}
	\mu_{00}(a \otimes 1_H) = a = \mu_{00}(1_H \otimes a)
	\quad , \quad 
	\chi(1_H) = 1 \ .
\ee

Analysed sector by sector, associativity in \eqref{eq:unit+assoc-def} gives eight conditions on $\mu_{00}$, $\chi$ and $m$. These follow by straightforward computation after substituting the associator in \eqref{eq:SF-assoc}. To state the result, we write, for $a,b \in L$, $\mu_{00}(a \otimes b) =: a \Lmul b$, and introduce the map $P : S(\h) \ot L \ot E \ot E \to L$, 
\be\lb{p}
	P(g \otimes a \ot e \ot f) = g.m\big(\,\chi(a)e\, \ot \,f\,\big) \ .
\ee
The $S(\h)$-action on $L$ will be denoted by $\rho^L : S(\h) \otimes L \to L$ or just by `$\,.\,$'.
Then the associativity conditions are:
\begin{align}
A000~:\quad & a \Lmul (b \Lmul c) = (a \Lmul b) \Lmul c & (\forall a,b,c \in L)
\nonumber\\[.4em]
A001~:\quad &\chi(a \Lmul b) = \chi(a)\chi(b)  & (\forall a,b \in L)
\nonumber\\[.4em]
A100~:\quad &\chi(a \Lmul b) = \chi(b)\chi(a)  & (\forall a,b \in L)
\nonumber\\[.4em]
A010~:\quad & \textstyle \chi(a)\chi(b) = \sum_{(\gamma)} (-1)^{|\gamma^{(2)}||a|} \chi(\gamma^{(2)}b)\chi(\gamma^{(1)}a) 
& (\forall a,b \in L)
\nonumber\\[.4em]
A110~~\quad &
P \circ \big\{ \id_{S(\h)} \otimes \big[ \rho^L \circ (S_S \otimes \id_L)\big] \ot \id_{E \ot E} \big\} \circ (\Delta_S \otimes \sigma^{\mathrm{s.v.}}_{E \ot E,L})
\nonumber\\
\&\,A011\,:&\qquad =
\mu_{00} \circ \big\{ \big[\rho^L \circ (\id_{S(\h)} \otimes m)\big] \otimes \id_L \big\}
\nonumber\\
&\qquad =
\mu_{00} \circ \sigma^{\mathrm{s.v.}}_{L,L} \circ \big\{ \big[\rho^L \circ (\id_{S(\h)} \otimes m)\big] \otimes \id_L \big\}
& \hspace{-4em}: ~ S(\h) \ot E \ot E \otimes L \to L
\nonumber\\[.4em]
A101~:\quad & P = P \circ \big\{ \gamma.(-) \otimes \sigma^{\mathrm{s.v.}}_{E,E} \big\}
& \hspace{-7em}: ~ S(\h) \ot L \ot E \ot E \to L
\nonumber\\[.4em]
A111~:\quad & \chi\big(g.m(u\ot v)\big)w = \beta \,\chi\big(\phi(g).m(u \ot w)\big)v
& \hspace{-2em}(\forall g \in S(\h), u,v,w \in E)
\nonumber
\end{align}
In deriving A110 we employed \cite[Lem.\,2.3\,(a)]{Davydov:2012xg} and \eqref{eq:mu-comm-01-10-11}. 
The first important consequence of the associativity conditions is:

\begin{lemma}
Let $J \subset L$ be the image of $\mu_{11} = \rho^L \circ (\id_{S(\h)} \otimes m) : S(\h) \ot E \ot E \to L$. Then $J \oplus E$ is a submodule of $H$, seen as a left-module over itself.
\end{lemma}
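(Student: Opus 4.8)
The plan is to verify the left-submodule condition $\mu\big(H \ast (J\oplus E)\big) \subseteq J\oplus E$ directly, decomposing the left action $\mu$ according to the four sectors of the graded tensor product, and to reduce everything to the single nontrivial inclusion $\mu_{00}(L\ast J) \subseteq J$, which I would then extract from the associativity identity A110\,\&\,A011.

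First I would check that $M := J\oplus E$ is genuinely a subobject of $H$ in $\SF(\h)$. Its odd part $E$ is all of $H_1$, and $J$ is a sub-$S(\h)$-module of $L$: for $g\in S(\h)$ and a generating element $g'.m(e\ot f)\in J$ one has $g.\big(g'.m(e\ot f)\big) = (gg').m(e\ot f)\in J$, since $\rho^L$ is an $S(\h)$-action. Hence $J\oplus E$ is a legitimate object of $\SF_0(\h)\oplus\SF_1(\h)$ sitting inside $H$. Next I would observe that three of the four sectors of $\mu\big(H\ast M\big)$ are immediate: $\mu_{01}$ maps $L\ast E$ into $E$, $\mu_{10}$ maps $E\ast J\subseteq E\ast L$ into $E$, and $\mu_{11}$ maps $E\ast E$ into $J$ by the very definition of $J$ as the image of $\mu_{11}=\rho^L\circ(\id_{S(\h)}\ot m)$; all three outputs lie in $J\oplus E$. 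The only component that is not automatic is $\mu_{00}$ restricted to $L\ast J$, so the whole claim comes down to $\mu_{00}(L\ot J)\subseteq J$.

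For this I would invoke A110\,\&\,A011. The key observation is that the map $P$ of \eqref{p} has image inside $J$, because $P(g\ot a\ot e\ot f)=g.m\big(\chi(a)e\ot f\big)$ is again of the form $g'.m(u\ot v)$ with $g'\in S(\h)$ and $u,v\in E$. The associativity identity asserts that $\mu_{00}\circ \sigma^{\mathrm{s.v.}}_{L,L}\circ\big\{[\rho^L\circ(\id_{S(\h)}\ot m)]\ot\id_L\big\}$ coincides with $P$ precomposed with the same prefactors, and therefore also lands in $J$. Evaluating the left-hand side on $g\ot e\ot f\ot a$ produces, up to the Koszul sign from $\sigma^{\mathrm{s.v.}}_{L,L}$, the element $\mu_{00}\big(a\ot(g.m(e\ot f))\big)=a\Lmul\big(g.m(e\ot f)\big)$; as $g.m(e\ot f)$ runs over a spanning set of $J$ and $a$ over $L$, this yields exactly $\mu_{00}(L\ot J)\subseteq J$, completing the verification.

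The step I expect to be the main obstacle is making the bookkeeping in A110\,\&\,A011 precise. Since the statement concerns a \emph{left} module, one wants $\mu_{00}(a\ot j)$ rather than $\mu_{00}(j\ot a)$, which is why I would use the third of the three equal expressions in A110\,\&\,A011 instead of the second, and one must keep track of the sign introduced by $\sigma^{\mathrm{s.v.}}_{L,L}$ (harmless for a membership statement, but it should be named). Commutativity of $\mu_{00}$, condition \eqref{eq:mu00-commutative}, is what reconciles the left and right versions. Once the spanning set of $J$ and the inclusion $\operatorname{im}P\subseteq J$ are identified, the conclusion is immediate and uses only the module structure together with associativity, not the remaining wish-list conditions.
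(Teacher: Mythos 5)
Your proposal is correct and follows essentially the same route as the paper: reduce sector by sector to the single nontrivial inclusion $\mu_{00}(L\ast J)\subseteq J$, then read this off from the equality in A110 between the last expression (which, up to a parity sign, computes $a\Lmul x$ for $x\in J$) and the first expression, whose image manifestly lies in the image of $P$ and hence in $J$. The only addition is your explicit preliminary check that $J$ is an $S(\h)$-submodule, which the paper leaves implicit.
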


\begin{proof}
We need to check that the image of $\mu : (L \oplus E) \otimes (J \oplus E) \to (L \oplus E)$ is  contained in $J \oplus E$. We can again proceed sector by sector. For sector 11 this is true by definition and for sectors 01 and 10 there is nothing to check.

The only non-trivial check arises in sector 00, where we need to show $a \Lmul x \in J$ for all $a \in L$, $x \in J$. To do so, write $x = \sum_i g_i.m(e_i \ot f_i)$ for some $g_i \in S(\h)$, $e_i,f_i \in E$. Then $a \Lmul x$ is (possibly up to a parity sign) just the last line of A110 applied to $\sum_i g_i \ot e_i \ot f_i \ot a$. But the image of the first line of A110 is obviously in $J$.
\end{proof}

Thus W5 implies:

\begin{corollary} \label{cor:m-generates}
$\mu_{11} : S(\h) \ot E \ot E \to L$ is surjective. 
\end{corollary}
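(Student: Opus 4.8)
The plan is to read off the corollary directly from the preceding lemma together with hypothesis W5. The lemma establishes that $J \oplus E$ is a submodule of $H$, viewed as a left module over itself, where $J = \mathrm{im}(\mu_{11}) \subseteq L$. First I would observe that $J$ is genuinely an $S(\h)$-submodule of $L$, and hence $J \oplus E$ a bona fide subobject of $H$ in $\SF(\h)$: since $\mu_{11}(g \otimes e \otimes f) = g.m(e \otimes f)$ by \eqref{eq:mu_11-def-via-m}, the image $J$ is closed under the $S(\h)$-action $\rho^L$ on $L$, because $g'.(g.m(e\otimes f)) = (g'g).m(e\otimes f) \in J$.

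Next I would invoke W1, which guarantees $E \neq \{0\}$, so that the submodule $J \oplus E$ is non-zero. By W5, $H$ is simple as a module over itself, so its only non-zero submodule is $H$ itself. Therefore $J \oplus E = H = L \oplus E$, which forces $J = L$. Since $J$ is by definition the image of $\mu_{11}$, this shows that $\mu_{11}$ is surjective. I do not expect any genuine obstacle at this stage: all the substantive work is contained in the preceding lemma, and the corollary amounts to the elementary fact that a non-zero submodule of a simple module must be the whole module. The only point requiring a moment's care is confirming that $J \oplus E$ is non-zero and that $J$ is a legitimate subobject, both of which are immediate from W1 and from the $S(\h)$-stability of $\mathrm{im}(\mu_{11})$.
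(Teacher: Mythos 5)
Your argument is exactly the one the paper intends: the preceding lemma shows $J \oplus E$ is a submodule of $H$, it is non-zero since $E \neq \{0\}$ by W1, and simplicity of $H$ as a module over itself (W5) forces $J \oplus E = H$, hence $J = L$. This matches the paper's (implicit) proof, so there is nothing to add.
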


From this surjectivity result and the last two lines of A110 we can conclude that $L$ is commutative in $\svect$, i.e.\ that 
\be
\mu_{00} \circ \sigma^{\mathrm{s.v.}}_{L,L} = \mu_{00} \ .
\ee

\subsection{Non-degenerate pairing on $L$}

By W4 there is an $S(\h)$-module map $\kappa : L \to \one$ such that $\kappa \circ \mu_{00} : L \otimes L \to \one$ is non-degenerate.

\begin{lemma}\label{lem:kernel-hatC}
There is an even linear map $\tilde m : E \ot E \to S(\h)\ot (E \ot E)^*$ such that the map $L \to S(\h)\ot (E \ot E)^*$, $g.m(e \ot f) \to g.\tilde m(e \ot f)$ for all $g \in S(\h)$, $e,f \in E$, is well-defined and an injective $S(\h)$-module map.
\end{lemma}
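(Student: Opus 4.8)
The plan is to produce $\tilde m$ not by writing down a closed formula, but by first constructing an honest $S(\h)$-module map $\Phi : L \to S(\h)\ot(E\ot E)^*$ and then setting $\tilde m := \Phi\circ m$. With this, well-definedness of the assignment $g.m(e\ot f)\mapsto g.\tilde m(e\ot f)$ is automatic: it is simply $\Phi$ evaluated on $g.m(e\ot f)=\mu_{11}(g\ot e\ot f)$, so no consistency check on the many ways of writing an element of $L$ is required. The two substantial points are therefore (i) the construction of $\Phi$, and (ii) its injectivity.

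For (i) I would transpose the non-degenerate pairing of W4 through the self-duality of $S(\h)$ furnished by the cointegral. Since $\lambda$ from \eqref{eq:lambda-def} is a nonzero top-degree form and the multiplication $S^k(\h)\ot S^{d-k}(\h)\to S^d(\h)$ is perfect, the pairing $\langle g,h\rangle:=\lambda(gh)$ on $S(\h)$ is non-degenerate. For $x\in L$ and $e,f\in E$ I would define $[\Phi(x)]_{e\ot f}\in S(\h)$ to be the unique element with
\[
\lambda\big(g\cdot[\Phi(x)]_{e\ot f}\big)=\kappa\big((g.x)\Lmul m(e\ot f)\big)\qquad\text{for all }g\in S(\h),
\]
which exists and is unique by non-degeneracy of $\langle-,-\rangle$. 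Bilinearity in $e,f$ packages this into $\Phi(x)\in S(\h)\ot(E\ot E)^*$, and replacing $x$ by $h.x$ gives $\lambda(g\cdot[\Phi(h.x)]_{e\ot f})=\kappa((gh.x)\Lmul m(e\ot f))=\lambda(gh\cdot[\Phi(x)]_{e\ot f})$ for all $g$, whence $\Phi(h.x)=h.\Phi(x)$; thus $\Phi$ is an $S(\h)$-module map. Evenness of $\tilde m=\Phi\circ m$ follows from that of $m$ and $\Phi$.

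For (ii), the key input besides W4 is an adjunction identity for the Frobenius pairing $\kappa\circ\mu_{00}$ with respect to the $S(\h)$-action. Since $\mu_{00}$ is a morphism in $\SF_0(\h)$ it is $S(\h)$-linear for the coproduct action, $g.(x\Lmul y)=\sum_{(g)}(g^{(1)}.x)\Lmul(g^{(2)}.y)$, and $\kappa$ is a module map, $\kappa(g.w)=\eps_S(g)\kappa(w)$; combining these yields $\sum_{(g)}\kappa((g^{(1)}.x)\Lmul(g^{(2)}.y))=\eps_S(g)\kappa(x\Lmul y)$. Feeding in the antipode (using cocommutativity and $S_S^2=\id$) this rearranges to an identity of the form $\kappa(x\Lmul(g.y))=\pm\,\kappa((S_S(g).x)\Lmul y)$, the sign being the Koszul sign of $\svect$. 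Now suppose $\Phi(x)=0$, i.e.\ $\kappa((g.x)\Lmul m(e\ot f))=0$ for all $g\in S(\h)$ and $e,f\in E$. Applying the adjunction with $g$ replaced by $S_S(g)$ gives $\kappa(x\Lmul(g.m(e\ot f)))=0$ for all $g,e,f$. By Corollary \ref{cor:m-generates} the elements $g.m(e\ot f)=\mu_{11}(g\ot e\ot f)$ span $L$, so $\kappa(x\Lmul z)=0$ for every $z\in L$; non-degeneracy of $\kappa\circ\mu_{00}$ from W4 then forces $x=0$. Hence $\Phi$, and with it the map in the statement, is injective.

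I expect the main obstacle to be the bookkeeping in step (ii): deriving the adjunction identity with the correct placement of $S_S$ and the right Koszul signs, and then observing that because the hypothesis $\Phi(x)=0$ quantifies over \emph{all} $g$, the antipode and the sign are harmless and the vanishing transfers to the whole of $L$ via surjectivity of $\mu_{11}$. The construction of $\Phi$ itself is purely formal once one thinks of it as the transpose of the W4 pairing under $S(\h)\cong S(\h)^*$. (The resulting injection $L\hookrightarrow S(\h)\ot(E\ot E)^*$ is where the name of the lemma comes in: being $S(\h)$-linear it lands in $\ker(\hat C)\ot(E\ot E)^*$, since $\hat C$ acts as zero on $L$ by Lemma \ref{lem:twist=id}, which will later bound $\dim L$ via Lemma \ref{lem:C-trivial-only-above-d2-2}.)
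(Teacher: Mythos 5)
Your proof is correct and follows essentially the same route as the paper's: both construct the injection as the transpose of the W4 pairing $D=\kappa\circ\mu_{00}$ through the self-duality of $S(\h)$ furnished by the top-degree cointegral, and both deduce injectivity from non-degeneracy of that pairing together with surjectivity of $\mu_{11}$ (Corollary \ref{cor:m-generates}). The paper packages the transpose as the single composite $(\id\ot D)\circ(\id\ot\mu_{11}\ot\id)\circ(B\ot\id)$ with a non-degenerate copairing $B$ built from $\Delta_S(\lambda)$ and $\coev_{E\ot E}$, which puts $g$ on the $m(e\ot f)$ side from the outset and thereby avoids your antipode/adjunction shuffle; otherwise the two arguments coincide.
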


\begin{proof}
Write $D : L \otimes L \to \one$, $D = \kappa \circ \mu_{00}$ for the non-degenerate pairing on $L$.

Let $\lambda$ be a non-zero element of  $S^d(\h)$. Then $\lambda$ is a cointegral for $S(\h)$ and $\one \to S(\h) \otimes S(\h)$, 
	$1 \mapsto \Delta_S(\lambda)$ 
is a non-degenerate copairing and a morphism in $\SF_0(\h)$.
Combine this with the copairing $\one \to (E \ot E)^* \ot (E \ot E)$ to get a non-degenerate copairing
\be
	B ~:~ \one ~\longrightarrow~ \big\{S(\h)\ot (E \ot E)^*\big\} \ot \big\{S(\h)\ot E \ot E \big\}
\ee
in $\SF_0(\h)$.

Since $\mu_{11} : S(\h) \ot E \ot E \to L$ is surjective by Corollary \ref{cor:m-generates}, we see that
\be
 (\id \otimes D) \circ (\id\otimes \mu_{11} \otimes \id) \circ (B \otimes \id)
	~:~ L \longrightarrow S(\h)\ot (E \ot E)^*
\ee
is an injective $S(\h)$-module map. Taking $\tilde m$ to be the composition of $m$ with the above map gives the statement of the lemma.
\end{proof}

Thanks to the above lemma we may as well take $m=\tilde m$, i.e.\ assume that $L = \langle m(E \ot E) \rangle \subset S(\h) \ot (E \ot E)^*$ as an $S(\h)$-module. Then, since $\eta : \one \to L$ is a map in $\SF_0(\h)$, and since only the top degree $S^d(\h)$ has trivial $\h$-action, we have
\be\label{eq:1L-in-Sd}
	1_H \,\in\, S^d(\h) \ot (E \ot E)^* \,\cap\, L \ .
\ee

Define the even linear map $\psi : S(\h)\ot E \ot E \rightarrow \End(E)$, $\psi(g \ot e \ot f)=\chi(g.m(e \ot f))$.

\begin{lemma}\label{lem:m-xi-chi}
\begin{enumerate}
\item $m(E \ot E) \subset S^{\ge\frac d2}(\h)\ot (E \ot E)^*$.
\item $\psi$ is nonzero on $S^{\frac d2}(\h)\ot E \ot E$ and zero on all other $S^{k}(\h)\ot E \ot E$.
\end{enumerate}
\end{lemma}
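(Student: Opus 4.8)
The plan is to prove the two parts almost independently. Part (1) is a pure degree-counting argument. By Lemma \ref{lem:twist=id} the element $\hat C$ acts as zero on $L$, and we have already arranged $L \subset S(\h)\ot (E\ot E)^*$ as an $S(\h)$-module, with $\hat C \in S^2(\h)$ acting by left multiplication on the first tensor factor. Decomposing $m(e\ot f)\in L$ into its homogeneous pieces in $S^k(\h)\ot(E\ot E)^*$ and using that $\hat C$ has $\Zb$-degree $2$, the relation $\hat C\,m(e\ot f)=0$ forces $\hat C$ to annihilate each homogeneous piece separately; Lemma \ref{lem:C-trivial-only-above-d2-2} then shows that every nonzero piece lies in degree $k\ge \tfrac d2-1$. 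To upgrade this to $k\ge\tfrac d2$ I would invoke parity: since $\beta=1$, $E$ is purely even, so $(E\ot E)^*$ is purely even and $m(e\ot f)\in L_\mathrm{ev}$, whence only even $\Zb$-degrees survive. As $\beta=1$ forces $d\in4\Zb$, the degree $\tfrac d2-1$ is odd and hence excluded, giving $m(E\ot E)\subset S^{\ge\frac d2}(\h)\ot(E\ot E)^*$.

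For the vanishing statement in part (2), I would split according to whether $k>\tfrac d2$ or $k<\tfrac d2$. For $g\in S^k(\h)$ with $k>\tfrac d2$, part (1) puts $m(e\ot f)$ in $\Zb$-degrees $\ge \tfrac d2$, so acting by $g$ raises the $S(\h)$-degree past $d$ and therefore $g.m(e\ot f)=0$; thus $\psi$ vanishes on $S^k(\h)\ot E\ot E$. For $k<\tfrac d2$ I would use condition A111 (with $\beta=1$), namely $\chi\big(g.m(u\ot v)\big)w = \chi\big(\phi(g).m(u\ot w)\big)v$, i.e.\ $\psi(g\ot u\ot v)\,w=\psi(\phi(g)\ot u\ot w)\,v$. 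Since $\phi$ sends $S^k(\h)$ to $S^{d-k}(\h)$ and $d-k>\tfrac d2$, the right-hand side vanishes by the case just treated; as this holds for all $u,v,w\in E$, the endomorphism $\psi(g\ot u\ot v)$ is zero, so $\psi$ also vanishes on $S^k(\h)\ot E\ot E$ for $k<\tfrac d2$.

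For the non-vanishing in degree $\tfrac d2$, I would argue by contradiction. If $\psi$ were zero on $S^{\frac d2}(\h)\ot E\ot E$ as well, then combined with the vanishing just proved $\psi$ would be identically zero. But $L=\langle m(E\ot E)\rangle$ is generated as an $S(\h)$-module by $m(E\ot E)$, so every element of $L$ has the form $\sum_i g_i.m(e_i\ot f_i)$, and $\psi\equiv0$ would yield $\chi\equiv0$ on $L$ by linearity. This contradicts $\chi(1_H)=\id_E\neq0$ from \eqref{eq:1L-unit} (using $E\neq0$). Hence $\psi$ is nonzero on $S^{\frac d2}(\h)\ot E\ot E$.

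The key idea that makes the argument work — and the only step I expect to be non-routine — is the use of A111 together with the degree-reversing property of $\phi$ to ``fold'' the low-degree range $k<\tfrac d2$ onto the already-settled high-degree range $k>\tfrac d2$. Once that reflection is in place, the remaining ingredients (the parity bump from $\tfrac d2-1$ to $\tfrac d2$ in part (1), and the unit normalisation $\chi(1_H)=\id_E$ for the non-vanishing) are straightforward bookkeeping.
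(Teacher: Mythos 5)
Your proof is correct and follows essentially the same route as the paper: part (1) via $\hat C$-triviality on $L$, Lemma \ref{lem:C-trivial-only-above-d2-2} and the parity of $m$, and part (2) via degree overflow above $S^{\frac d2}(\h)$, the reflection $k\mapsto d-k$ through A111 and $\phi$, and $\chi(1_H)=\id_E$ for the non-vanishing (the paper phrases the A111 step as a contradiction rather than your direct folding of $k<\tfrac d2$ onto $k>\tfrac d2$, but the content is identical). The one caveat is that you assume $\beta=1$ throughout, whereas this lemma is needed \emph{before} Proposition \ref{prop:dimE=1|0} establishes $\beta=1$ (so your argument as written would be circular in the paper's logical order); fortunately it goes through verbatim for $\beta=-1$, since $E\ot E$ is purely even whether $E$ is purely even or purely odd, $d\in 4\Zb$ holds in both cases, and the extra factor $\beta=\pm1$ in A111 is irrelevant for a vanishing statement.
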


\begin{proof}
1.\ By Lemma \ref{lem:twist=id}, $\hat C$ acts trivially on $m(E \ot E)$. By Lemma \ref{lem:C-trivial-only-above-d2-2} and since $m$ is parity even, $m(E \ot E) \subset S^{\ge\frac d2}(\h)\ot (E \ot E)^*$. 

\smallskip\noindent
2.\ By part 1, any $g \in S^{>\frac d2}(\h)$ acts trivially on $m(E \ot E)$ and so $\psi$ is zero on $S^{>\frac d2}(\h)\ot E \ot E$. 

On the other hand, $\chi(1_H) = \id_E$ by \eqref{eq:1L-unit} and $1_H \in S^d(\h) \ot (E \ot E)^*$ by \eqref{eq:1L-in-Sd}. Since $1_H \in \langle m(E \ot E) \rangle$ there are $g_i \in S(\h)$, $e_i,f_i \in E$ such that $1_H = \sum_i g_i.m(e_i \ot f_i)$ and hence $\sum_i \psi(g_i \ot e_i \ot f_i)= \id_E$. It follows that $\psi$ is nonzero on at least one $S^k(\h)\ot E \ot E$ with $0 \le k \le \frac d2$.
Suppose $\psi$ is nonzero on $S^k(\h)\ot E \ot E$ with $0 \le k < \frac d2$. By condition A111 -- since $\phi$ maps $S^k(\h)$ isomorphically to $S^{d-k}(\h)$ -- $\psi$ would be non-zero also on $S^{d-k}(\h)\ot E \ot E$, in contradiction to $\psi$ being zero  on $S^{>\frac d2}(\h)$. 
\end{proof}

\begin{lemma}\label{lem:prop-to-1}
For all $u \in E \ot E$ and $g \in S^{\frac d2}(\h)$ we have $g.m(u) \in \Cb 1_H$.
\end{lemma}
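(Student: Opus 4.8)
The plan is to first locate $g.m(u)$ inside the socle of $L$ and then pin it to the line $\Cb\,1_H$. Since $g\in S^{\frac d2}(\h)$ acts by the left regular action, it raises the $\Zb$-grading by $\frac d2$; as $m(u)\in S^{\ge\frac d2}(\h)\ot(E\ot E)^*$ by Lemma \ref{lem:m-xi-chi}(1), the product $g.m(u)$ lands in $S^{\ge d}(\h)\ot(E\ot E)^* = S^{d}(\h)\ot(E\ot E)^*$, the top-degree part on which $\h$ acts as zero. Both $g.m(u)$ and $1_H$ (by \eqref{eq:1L-in-Sd}) therefore lie in this ``socle'' subspace $\mathrm{Soc}:=L\cap\big(S^d(\h)\ot(E\ot E)^*\big)$.

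Next I would show that $\chi$ is injective on $\mathrm{Soc}$. Any $x\in\mathrm{Soc}$ is annihilated by $\h$, hence by all of $S^{\ge 1}(\h)$, so in the associativity identity A110 every term of $\Delta_S(h)$ with $h^{(2)}$ of positive degree drops out and the relation collapses to $(h.m(e\ot f))\Lmul x = \pm\,h.m\big(\chi(x)e\ot f\big)$. Writing $1_H=\sum_i g_i.m(a_i\ot b_i)$ and using the unit axiom then gives $x = 1_H\Lmul x = \sum_i\pm\,g_i.m\big(\chi(x)a_i\ot b_i\big)$, so $\chi(x)=0$ forces $x=0$. Because $\chi(1_H)=\id_E$, this injectivity reduces the desired statement $g.m(u)\in\Cb\,1_H$ to the operator statement $\chi(g.m(u))\in\Cb\,\id_E$.

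The heart of the matter is this operator statement. Introducing $\beta_g(p,q,w):=\chi(g.m(p\ot q))w$ for $g\in S^{\frac d2}(\h)$, symmetry of $m$ gives invariance under $p\leftrightarrow q$, while A111 (with $\beta=1$), namely $\chi(g.m(p\ot q))w=\chi(\phi(g).m(p\ot w))q$, exchanges the roles of $q$ and $w$ at the cost of $g\mapsto\phi(g)$. Since $\phi$ restricts to an isomorphism of $S^{\frac d2}(\h)$, these two transpositions generate the full symmetric group and force $\beta_g$ to be totally symmetric in $(p,q,w)$ for each fixed $g$. This total symmetry is the crucial lever: a scalar value $\chi(g.m(p\ot q))=c_g(p,q)\,\id_E$ is compatible with it only if $c_g(p,q)w=c_g(p,w)q$, which over $E$ of dimension $>1$ forces $c_g\equiv 0$, whereas Lemma \ref{lem:m-xi-chi}(2) guarantees some $\chi(g.m(u))\neq 0$. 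I therefore expect the statement to be genuinely equivalent to $\dim E=1$, and the main obstacle is to convert the total symmetry of $\beta_g$, the normalization $\sum_i\chi(g_i.m(a_i\ot b_i))=\id_E$ (only the $S^{\frac d2}$-terms contribute), and the non-degeneracy of $\kappa\circ\mu_{00}$ from W4 into a proof that $E$ is one-dimensional -- after which $g.m(u)\in\mathrm{Soc}=\Cb\,1_H$ is immediate. Executing this step while correctly tracking the parity signs and the precise action of $\phi$ on $S^{\frac d2}(\h)$ is the delicate technical core.
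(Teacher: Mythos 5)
Your opening step --- that $g.m(u)$ and $1_H$ both lie in $L\cap\big(S^d(\h)\ot(E\ot E)^*\big)$ because $m(E\ot E)\subset S^{\ge\frac d2}(\h)\ot(E\ot E)^*$ and left multiplication by $g\in S^{\frac d2}(\h)$ pushes everything into the top degree --- is exactly the paper's starting point. But from there your argument does not close. The paper finishes in one line: by W5 and reciprocity, $\dim\SF(\one,H)=\dim{_H\SF}(H\ast\one,H)=1$, and since $\h$ acts as zero only on the top graded piece $S^d(\h)\ot(E\ot E)^*$, one has $\SF(\one,H)\cong L\cap\big[S^d(\h)\ot(E\ot E)^*\big]$; hence that subspace is the line $\Cb\,1_H$ and the lemma follows. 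You never invoke this, and instead reduce the lemma (via injectivity of $\chi$ on the top-degree piece, which does check out) to the operator statement that $\chi(g.m(u))$ is a scalar multiple of $\id_E$ --- and you then concede that this is ``genuinely equivalent to $\dim E=1$'' and leave the proof of $\dim E=1$ as an unexecuted ``delicate technical core''. That is a genuine gap, not a technicality: in the paper the present lemma is an \emph{input} to the proof that $E$ is one-dimensional (Proposition \ref{prop:dimE=1|0}), so a route that needs $\dim E=1$ first must supply an independent argument for it, which you do not.

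Two smaller points. First, the claimed ``total symmetry of $\beta_g$ for each fixed $g$'' does not follow from your two transpositions: A111 exchanges the second and third arguments only at the cost of replacing $g$ by $\phi(g)$, so composing the generators yields identities relating $\beta_g$ to $\beta_{\phi^k(g)}$ rather than a genuine $S_3$-symmetry at fixed $g$. (Your intended contradiction with $\dim E\ge 2$ does survive using A111 alone, but only once scalarity of $\chi(h.m(-))$ is already known for all $h$ --- which is precisely the point at issue.) Second, your symmetry observation is not without value: \emph{granted} the lemma as proved in the paper, it gives an alternative derivation of $\dim E=1$ that avoids Lemma \ref{lem:mult-of-nonzero-is-nonzero}. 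As a proof of the lemma itself, however, the proposal is incomplete.
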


\begin{proof}
By W5, $H$ is simple as a module over itself. By reciprocity, $\dim\SF(\one,H)=1$. Since the action of $S(\h)$ on $\one$ is trivial, the image of any map $\one \to L \subset S(\h) \ot (E \ot E)^*$ must lie in $S^d(\h) \ot (E \ot E)^*$. Thus $\SF(\one,H) \cong  L \cap \big[ S^d(\h) \ot (E \ot E)^* \big]$ is one-dimensional, and hence the latter is spanned by $1_H$. By part 1 of Lemma \ref{lem:m-xi-chi}, $g.m(u)$ is contained in $S^d(\h) \ot (E \ot E)^*$ (and of course in $L$), hence proportional to $1_H$.
\end{proof}

The proof of the next proposition requires the following technical lemma:

\begin{lemma}\label{lem:mult-of-nonzero-is-nonzero}
Let $\C$ be an abelian rigid monoidal category and let $A$ be a Frobenius algebra in $\C$ that is simple as a left module over itself. Then for all $U,V \in \C$ and all non-zero maps $u : U \to A$, $v : V \to A$, also the map $\mu \circ (u \otimes v) : U \otimes V \to A$ is non-zero.
\end{lemma}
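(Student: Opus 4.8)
The plan is to detect the non-vanishing of $\mu \circ (u \otimes v)$ through the Frobenius pairing and to use simplicity of $A$ to convert one of the two maps into an epimorphism which can then be cancelled. Write $\kappa : A \to \one$ for the Frobenius form, so that $d_A := \kappa \circ \mu : A \otimes A \to \one$ is a non-degenerate pairing; by rigidity this pairing comes with a copairing $b_A : \one \to A \otimes A$ satisfying the snake identity $(\id_A \otimes d_A) \circ (b_A \otimes \id_A) = \id_A$. The first step I would isolate as a general \emph{criterion}: for any morphism $f : X \to A$ one has $f \neq 0$ iff $d_A \circ (\id_A \otimes f) \neq 0$. One direction is clear; for the other, inserting the snake identity recovers $f$ as $f = (\id_A \otimes [d_A \circ (\id_A \otimes f)]) \circ (b_A \otimes \id_X)$ up to unit constraints, so $d_A \circ (\id_A \otimes f) = 0$ forces $f = 0$. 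This criterion turns every ``map into $A$ is non-zero'' question into a ``map into $\one$ is non-zero'' question.

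Second, I would extract an epimorphism from the simplicity hypothesis. Consider $\hat u := \mu \circ (\id_A \otimes u) : A \otimes U \to A$, a morphism of left $A$-modules out of the free module $A \otimes U$. Precomposing with $\eta \otimes \id_U$ and using the unit axiom returns $u$ up to the left unit constraint, so $\hat u \neq 0$; its image is therefore a non-zero $A$-submodule of $A$, which by simplicity of $A$ as a left module over itself must be all of $A$. Hence $\hat u$ is an epimorphism in $\C$ (the forgetful functor ${}_A\C \to \C$ is exact, so images computed in ${}_A\C$ and in $\C$ agree).

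Third, I would combine the two ingredients. Applying the criterion to $f = \mu \circ (u \otimes v)$ reduces the claim to $d_A \circ (\id_A \otimes \mu \circ (u \otimes v)) \neq 0$. Expanding $d_A = \kappa \circ \mu$ and using associativity of $\mu$ together with naturality of the associator, this morphism equals $d_A \circ (\hat u \otimes v)$ precomposed with the associativity isomorphism $\alpha_{A,U,V}$; since $\alpha$ is invertible it suffices to show $d_A \circ (\hat u \otimes v) \neq 0$. Finally I would factor $d_A \circ (\hat u \otimes v) = \big[ d_A \circ (\id_A \otimes v)\big] \circ (\hat u \otimes \id_V)$. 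The outer map $d_A \circ (\id_A \otimes v)$ is non-zero by the criterion applied to $v$, while $\hat u \otimes \id_V$ is an epimorphism because $\hat u$ is epi and $- \otimes V$ is right exact in the rigid abelian category $\C$; right-cancelling the epimorphism yields the required non-vanishing.

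The routine steps are all constraint-bookkeeping (unit and associativity isomorphisms), so the only conceptual point is the first one, and the main thing to get right is that a Frobenius (hence two-sidedly non-degenerate, self-dual) pairing genuinely detects non-zero morphisms into $A$ in the chosen slot of $d_A$; once this criterion is secured, simplicity supplies the epimorphism and the remainder is cancellation. An alternative that avoids the associator juggling would detect $\mu \circ (u \otimes v)$ through the other slot of $d_A$ and turn $v$ into an epimorphism instead, but this would require simplicity of $A$ as a right module, so I would keep the left-module version above in order to invoke only the simplicity hypothesis as stated.
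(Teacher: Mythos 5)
Your proof is correct, but it assembles the three ingredients (simplicity, Frobenius non-degeneracy, rigidity) in a mirrored way compared to the paper. The paper argues by contradiction: it uses $v$ and the duality of $V$ to build a single left $A$-module map $f = (\mu \ot \id)\circ(\id_A\ot v\ot\id)\circ(\id_A\ot\coev_V) : A \to A \ot V^*$, observes that $\mu\circ(u\ot v)=0$ would force $f\circ u=0$ and hence give $f$ a non-zero kernel, concludes $f=0$ from simplicity (kernel form of simplicity), and contradicts this with the non-degeneracy of $\eps\circ\mu$. You instead argue directly, swapping the roles of $u$ and $v$: simplicity is used in its image form to make $\hat u = \mu\circ(\id_A\ot u): A\ot U\to A$ an epimorphism, and non-degeneracy is packaged as a reusable detection criterion ($f\ne 0$ iff $d_A\circ(\id_A\ot f)\ne 0$, via the snake identity for the Frobenius copairing) applied to $v$ and to $\mu\circ(u\ot v)$. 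Both routes are sound at the stated level of generality; the paper's version is a little shorter since it needs only one auxiliary morphism and one contradiction, while yours avoids forming $V^*$ explicitly, isolates the detection criterion as an independent, reusable fact, and makes visible exactly where exactness of $-\ot V$ (from rigidity) enters. The associator and unit bookkeeping you defer is genuinely routine, and your remark that the left-slot version is the one matching the stated left-module simplicity hypothesis is well taken.
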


\begin{proof}
Consider the $A$-module map  (we omit coherence isomorphisms)
\be
f = \big[ A \xrightarrow{\id \otimes \coev_V} A \ot V \ot V^* 
\xrightarrow{\id \ot v \ot \id} A \ot A \ot V^* \xrightarrow{\mu \ot \id} A \ot V^* \big] \ .
\ee
Suppose that $\mu \circ (u \otimes v) =0$. Then $f \circ u = 0$ and so $f$ has a non-zero kernel. Since $A$ is simple as an $A$-module, we must have $f=0$. But by non-degeneracy of the Frobenius pairing $\eps \circ \mu : A \otimes A \to \one$, the composite $(\eps \otimes \id) \circ f$ is non-zero, which is a contradiction.
\end{proof}

After these preparations, we can show a key restriction on $H$: 

\begin{proposition}\label{prop:dimE=1|0}
$E$ is purely even and one-dimensional.
\end{proposition}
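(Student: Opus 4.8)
The plan is to play the associativity condition A111 off against the non-degeneracy statement in Lemma \ref{lem:m-xi-chi}(2). First I would isolate the top-degree part of the multiplication as a scalar form. For $g \in S^{\frac d2}(\h)$ and $e,f \in E$, Lemma \ref{lem:prop-to-1} gives $g.m(e \ot f) \in \Cb 1_H$, so I can write $g.m(e \ot f) = B(g,e,f)\,1_H$ for a trilinear form $B : S^{\frac d2}(\h) \ot E \ot E \to \Cb$. Since $\chi(1_H) = \id_E$ by \eqref{eq:1L-unit}, the map $\psi$ of Lemma \ref{lem:m-xi-chi} satisfies $\psi(g \ot e \ot f) = B(g,e,f)\,\id_E$ on $S^{\frac d2}(\h) \ot E \ot E$, and by Lemma \ref{lem:m-xi-chi}(2) the form $B$ is not identically zero.

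Next I would restrict condition A111 to $g \in S^{\frac d2}(\h)$. Because $\phi$ preserves $S^{\frac d2}(\h)$, both sides of A111 then lie in the scalar regime above, so A111 becomes the identity $B(g,u,v)\,w = \beta\,B(\phi(g),u,w)\,v$ in $E$, valid for all $u,v,w \in E$. Now suppose $\dim_\Cb E \ge 2$. Choosing $v,w \in E$ linearly independent, the left side is a multiple of $w$ and the right side a multiple of $v$, so both scalar coefficients must vanish; in particular $B(g,u,v)=0$. Since every nonzero $v$ admits an independent partner when $\dim_\Cb E \ge 2$, this forces $B \equiv 0$, contradicting $B \neq 0$. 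Hence $\dim_\Cb E = 1$, and by W1 the space $E$ is one-dimensional.

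Finally I would rule out $E$ purely odd. With $\dim_\Cb E = 1$, the space $E \ot E$ is spanned by $e \ot e$. The commutativity constraint $m = m \circ \sigma^{\mathrm{s.v.}}_{E,E}$ from \eqref{eq:mu-comm-01-10-11} gives $m(e \ot e) = m\big(\sigma^{\mathrm{s.v.}}_{E,E}(e \ot e)\big)$; if $e$ were odd then $\sigma^{\mathrm{s.v.}}_{E,E}(e \ot e) = -\,e \ot e$, whence $m(e \ot e) = 0$ and so $m = 0$. But then $\mu_{11}$ vanishes identically, contradicting its surjectivity onto $L$ (Corollary \ref{cor:m-generates}), since $L$ contains the nonzero unit $1_H$. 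Therefore $e$ is even, i.e.\ $E$ is purely even and one-dimensional, which is also consistent with Lemma \ref{lem:twist=id} then forcing $\beta = 1$.

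I expect the main obstacle to be the first step: recognising that A111 collapses to a scalar identity on the top degree $S^{\frac d2}(\h)$, which requires combining Lemma \ref{lem:prop-to-1} with $\chi(1_H)=\id_E$ and with the concentration of $\psi$ in degree $\tfrac d2$. Once this reduction is in place, the linear-independence step and the parity step are short. Two points I would double-check are that the coefficient comparison in A111 is genuinely an equation of vectors in $E$ (so that no super-signs interfere with the independence argument), and that $L \neq 0$ is indeed available to contradict $m=0$ in the purely odd case.
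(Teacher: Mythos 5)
Your argument is correct, but it reaches $\dim E = 1$ by a genuinely different route than the paper. The paper evaluates A110 on $a_1\cdots a_{d/2}\ot u\ot m(e\ot f)$ for a suitably normalised pair $e,f$ to conclude that $m(u)=F(u)\,m_0$ for a linear functional $F$, i.e.\ that the image of $m$ is one-dimensional; for $\dim E\ge 2$ it then picks $p,q$ with $F(p\ot q)=0$ and derives a contradiction from the auxiliary Lemma \ref{lem:mult-of-nonzero-is-nonzero} (non-zero maps into a simple Frobenius algebra have non-zero product). You instead restrict A111 to $g\in S^{\frac d2}(\h)$, where Lemma \ref{lem:prop-to-1} and $\chi(1_H)=\id_E$ collapse both sides to scalars, giving the vector identity $B(g,u,v)\,w=\beta\,B(\phi(g),u,w)\,v$; linear independence of $v,w$ then forces $B\equiv 0$, contradicting Lemma \ref{lem:m-xi-chi}(2) directly. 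Your route avoids Lemma \ref{lem:mult-of-nonzero-is-nonzero} entirely (W4 and W5 still enter upstream, through Corollary \ref{cor:m-generates} and Lemmas \ref{lem:kernel-hatC}--\ref{lem:prop-to-1}) and is somewhat shorter; the paper's route yields the intermediate fact that $\mathrm{im}\,m$ is one-dimensional before knowing $\dim E=1$. The super-sign issue you flag is harmless: by Lemma \ref{lem:twist=id} together with W2, $E$ is purely even or purely odd, so any Koszul signs only multiply the scalar coefficients by $\pm1$ and do not affect the vanishing conclusion. Your parity step matches the paper's (the paper contradicts W4, you contradict surjectivity of $\mu_{11}$ onto $L\ni 1_H\neq 0$; both are valid).
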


\begin{proof}
As in the proof of part 2 of Lemma \ref{lem:m-xi-chi}, we can find $g_i \in S(\h)$, $e_i,f_i \in E$ such that $\sum_i \psi(g_i \ot e_i \ot f_i)= \id_E$.
In particular, there must be at least one $i_0$ such that $\psi(g_{i_0} \ot e_{i_0} \ot f_{i_0}) \neq 0$. We write $g = g_{i_0}$,  $e = e_{i_0}$ and  $f = f_{i_0}$. We may assume that $g = a_1 \dots a_{d/2}$ for appropriate $a_1,\dots,a_{d/2} \in \h$ (it is certainly a sum of elements of this form, hence at least one must give a non-zero contribution when applying $\psi$). Using Lemma \ref{lem:prop-to-1}, we see that we can modify $e$ or $f$ by a constant to achieve
\be
	a_1 \cdots a_{d/2} . m(e \ot f) = 1_H \ .
\ee
Let $u \in E \ot E$ be arbitrary. We will now evaluate A110 on $a_1 \cdots a_{d/2} \ot u \ot m(e \ot f)$.
To compute the left hand side, note that of all summands in $\Delta_S(a_1 \cdots a_{d/2})$, only $1 \otimes a_1 \cdots a_{d/2}$ gives a non-zero contribution, since by part 2 of Lemma \ref{lem:m-xi-chi}, $\chi(g.m(e \ot f))$ is zero unless $g \in S^{\frac d2}(\h)$. With this observation one easily checks the left hand side of A110 to be equal to $m(u)$. Altogether, A110 becomes
\be
	m(u)
	=
	\big\{ a_1 \cdots a_{d/2}.m(u) \big\} \Lmul m(e \ot f) \ .
\ee
By Lemma \ref{lem:prop-to-1}, there is a linear function $F : E \ot E \to \Cb$ such that $a_1 \cdots a_{d/2}.m(u) = F(u) 1_L$. Writing $m_0 := m(e \ot f)$, the above equality becomes $m(u) = F(u) \,  m_0$. In particular, the image of $m$ is one-dimensional. 

If $\dim E \ge 2$ we can find 
	non-zero
elements $p,q \in E$ such that $F(p \ot q)=0$. Hence also $m(p \ot q)=0$. 
	Thinking of $p,q$ as non-zero maps $\Cb^{1|1} \to E$ in $\SF_1(\h)$, it follows that $\mu_{11} \circ (p * q) = 0$ (as $\mu_{11}(g \ot p \ot q) = g.m(p\ot q)=0$ for all $g \in S(\h)$). Together with condition W5, this is a contradiction to Lemma \ref{lem:mult-of-nonzero-is-nonzero}. 

Hence $\dim E=1$. If $E$ were purely odd, the symmetry requirement \eqref{eq:mu-comm-01-10-11} on $m$ would force $m$ to be identically zero. But then also $\mu_{11}=0$, in contradiction to the non-degeneracy condition W4 (and also to Lemma \ref{lem:mult-of-nonzero-is-nonzero})
\end{proof}

By the above proposition, we can restrict our attention to the case 
\be
\beta = 1
\quad \text{and} \quad
E=T \ .
\ee 
Then $m : E \ot E \to L$ is just an 
	even
element $m \in L$, and Lemma \ref{lem:m-xi-chi} says that $L = \langle m \rangle \subset S^{\ge \frac d2}(\h)$.

\begin{lemma}\label{lem:m-homog}
$m \in S^{\frac d2}(\h)$, and $\chi$ is nonzero on $S^d(\h)$ and zero on all other $S^k(\h) \cap \langle m \rangle$.
\end{lemma}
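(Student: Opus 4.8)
The plan is to prove the homogeneity statement $m\in S^{\frac d2}(\h)$ first, and then to read off the behaviour of $\chi$ from the resulting $\Zb$–grading of $L$. Since $E=T$ is one–dimensional and even, $m$ is a single even element of $L\subseteq S(\h)$ (I suppress the factor $(E\ot E)^*\cong\Cb$), the $S(\h)$–action is left multiplication, and the map $\psi$ of Lemma \ref{lem:m-xi-chi} is simply $\psi(g)=\chi(g.m)$; thus Lemma \ref{lem:m-xi-chi}(2) says that $\psi$ is supported precisely in degree $\frac d2$. I would first collect three facts. (i) Specialising conditions $A110\,\&\,A011$ to $g=1$ gives the product formula $m\Lmul\ell=\chi(\ell)\,m$ for all $\ell\in L$. (ii) Because $\kappa:L\to\one$ is an $S(\h)$–module map, $\kappa(g.m)=\eps_S(g)\,\kappa(m)$, and $\kappa(m)\neq0$: otherwise the Frobenius pairing $D=\kappa\circ\mu_{00}$ would vanish on $\mathrm{im}\,\mu_{11}=L$ (Corollary \ref{cor:m-generates}), contradicting W4. (iii) Combining (i) with commutativity (W3), $D(g.m,m)=\kappa\big(m\Lmul(g.m)\big)=\kappa(m)\,\chi(g.m)=\kappa(m)\,\psi(g)$.

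For the homogeneity of $m$ I would return to the embedding of Lemma \ref{lem:kernel-hatC}. Recall that $L$ is realised inside $S(\h)\ot(E\ot E)^*$ via the injective $S(\h)$–module map $(\id\ot D)\circ(\id\ot\mu_{11}\ot\id)\circ(B\ot\id)$, whose copairing $B$ is assembled from $\Delta_S(\lambda)$ for a top cointegral $\lambda\in S^{d}(\h)$. Evaluating this map on $m$ and using $E=T$, the realisation of $m$ equals $\sum_{(\lambda)}\lambda^{(1)}\,D(\lambda^{(2)}.m,\,m)$ modulo $(E\ot E)^*\cong\Cb$. By fact (iii), $D(\lambda^{(2)}.m,m)=\kappa(m)\,\psi(\lambda^{(2)})$, which by Lemma \ref{lem:m-xi-chi}(2) vanishes unless $\lambda^{(2)}\in S^{\frac d2}(\h)$. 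Since $\Delta_S(\lambda)$ is supported in complementary $\Zb$–degrees, $\deg\lambda^{(1)}+\deg\lambda^{(2)}=d$, only the terms with $\deg\lambda^{(1)}=\frac d2$ survive, and therefore $m\in S^{\frac d2}(\h)$.

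The statement about $\chi$ is then immediate. With $m\in S^{\frac d2}(\h)$ the module $L=S(\h).m$ is $\Zb$–graded and $S^{k}(\h)\cap\langle m\rangle=S^{k-\frac d2}(\h).m$. Hence on this subspace $\chi(g.m)=\psi(g)$ with $g\in S^{k-\frac d2}(\h)$, which by Lemma \ref{lem:m-xi-chi}(2) is nonzero exactly when $k-\frac d2=\frac d2$, i.e.\ $k=d$, and zero for every other $k$. In particular $\chi$ is nonzero on $S^{d}(\h)=S^{\frac d2}(\h).m$, as claimed.

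The one genuine obstacle is upgrading the inclusion $m\in S^{\ge\frac d2}(\h)$ of Lemma \ref{lem:m-xi-chi}(1) to the exact degree $\frac d2$: the relation $\chi(g.m)=\psi(g)$ only controls the total value of $\chi$ on $g.m$, so a direct attack keeps the homogeneity of $m$ and the ``$\chi$ sees only top degree'' statement entangled. What breaks the circularity is that the sharper support of $\psi$ (available only after Proposition \ref{prop:dimE=1|0}, where $E=T$) is fed through the top cointegral copairing $\Delta_S(\lambda)$ of Lemma \ref{lem:kernel-hatC}, which couples $S^{k}(\h)$ only to $S^{d-k}(\h)$ and thereby pins the degree of $m$ exactly. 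I expect the remaining work to be routine coproduct and antipode bookkeeping.
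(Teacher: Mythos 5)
Your proof is correct, but it reaches the key conclusion $m\in S^{\frac d2}(\h)$ by a genuinely different route than the paper. The paper's proof is a three-line top-component argument: write $m=\sum_k m_k$ in homogeneous pieces, let $m_M$ be the top one, use non-degeneracy of the product pairing $(g,h)\mapsto t(gh)$ on $S(\h)$ to find $g\in S^{d-M}(\h)$ with $g\,m_M=1_H$, hence $\chi(g\,m_M)=1$, and then invoke the support of $\psi$ (Lemma \ref{lem:m-xi-chi}(2)) to force $d-M=\frac d2$. You instead feed $m$ back through the explicit embedding of Lemma \ref{lem:kernel-hatC} and compute its realisation in closed form, $\tilde m=\kappa(m)\sum_{(\lambda)}\lambda^{(1)}\psi(\lambda^{(2)})$, so that the degree-complementarity of $\Delta_S(\lambda)$ together with the support of $\psi$ pins the degree outright; the preparatory facts (i)--(iii) (the product formula $m\Lmul\ell=\chi(\ell)\,m$ from A110 at $g=1$, $\kappa(g.m)=\eps_S(g)\kappa(m)$ with $\kappa(m)\neq0$, and $D(g.m,m)=\kappa(m)\psi(g)$ via $\svect$-commutativity of $\mu_{00}$) all check out against what is established before this lemma. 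The trade-off: the paper's argument is shorter and needs nothing beyond the multiplication on $S(\h)$, but it silently identifies $\chi(g\,m_M)$ with $\psi(g)=\chi(g.m)$, which is only literally valid once $m$ is known to be homogeneous --- otherwise one must still argue that $\chi$ of the lower-degree remainder $g.m-g\,m_M$ cannot cancel $\chi(1_H)=1$, and that is exactly the entanglement between the two halves of the lemma that you flag in your last paragraph. Your detour through the cointegral copairing decouples the two statements cleanly (at the cost of invoking the Frobenius structure W4), and as a bonus produces an explicit formula for $m$ in terms of $\psi$ and $\lambda$ that anticipates the existence construction of Section \ref{sec:ex-unique}. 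The deduction of the $\chi$-statement from homogeneity is the same in both proofs.
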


\begin{proof}
Write $m$ as a  sum of its homogeneous components and let $m_M \in S^M(\h)$ be the non-zero component of maximal $\Zb$-degree. Since $m \in S^{\ge\frac d2}(\h)$ we have $M \ge \frac d2$. By non-degeneracy of the pairing \eqref{eq:top-form-pairing} there exists a $g \in S^{d-M}(\h)$ such that $gm_M = 1_H$, and therefore $\chi(gm_M) = 1$. By part 2 of Lemma \ref{lem:m-xi-chi} we must have $d-M = \frac d2$. Altogether we obtain $m \in S^{\frac d2}$.

That $\chi$ is non-zero only on $S^d(\h)$ now follows from Lemma \ref{lem:m-xi-chi}.
\end{proof}

Given a subspace $\mathfrak{u} \subset \h$ of dimension $k = \dim\mathfrak{u}$, $S^k(\mathfrak{u})$ is a one-dimensional subspace of $S^k(\h)$. In terms of a basis $\{u_1,\dots,u_k\}$ of $\mathfrak{u}$ it is given by
\be
	S^k(\mathfrak{u}) = \Cb \, u_1 u_2 \cdots u_k \ .
\ee
For an $S(\h)$-module $V$ the {\em annihilator of $V$ in $\h$} is defined to be
\be
	ann_\h(V) = \{ x \in \h \,|\,x.V = 0 \} \ .
\ee

\begin{lemma}\label{lem:kernel-is-Lag}
Abbreviate $\f := ann_\h(L)$. Then
\begin{enumerate}
\item $m \in S^{\frac d2}(\f)$~,
\item $\f$ is a Lagrangian subspace of $\h$ (and so $\dim\f = \frac d2$),
\item $L = \{ g\in S(\h) \,|\, g\f=0  \}$, that is, $L$ is the annihilator of $\f$ in $S(\h)$.
\end{enumerate}
\end{lemma}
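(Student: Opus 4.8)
The plan is to argue entirely inside $S(\h)=\Lambda\h$, using the reductions already in force: $\beta=1$, $E=T$, and (Lemma \ref{lem:m-homog}) $L=\langle m\rangle=S(\h)\,m$ with $m\in S^{\frac d2}(\h)$. Write $L_k:=L\cap S^k(\h)$ for the graded pieces.

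I would first identify $\f$ concretely. Since every element of $L$ has the form $g\,m$ and elements of $\h$ (anti)commute in $S(\h)$, for $x\in\h$ one has $x.(g\,m)=\pm\,g\,(x\,m)$, so $x$ kills $L$ iff $x\wedge m=0$; hence
\[
  \f \;=\; ann_\h(L)\;=\;\{\,x\in\h \mid x\wedge m=0\,\}.
\]
With this description parts 1 and 2 become multilinear algebra. For a nonzero $m\in S^{\frac d2}(\h)=\Lambda^{\frac d2}\h$ the annihilator always satisfies $\dim\f\le \frac d2$, with equality precisely when $m$ is decomposable, in which case $m=c\,u_1\wedge\cdots\wedge u_{d/2}$ for a basis $\{u_i\}$ of $\f$, so that $m\in S^{\frac d2}(\f)$ (part 1). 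Everything therefore reduces to the single inequality $\dim\f\ge\frac d2$.

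The lower bound $\dim\f\ge\frac d2$ is the main obstacle, and it is the only point where the genuine algebra structure (not just the underlying linear maps) must be used. It is equivalent to $\dim(\h\wedge m)=\dim L_{\frac d2+1}\le\frac d2$, i.e.\ to the statement that $\wedge m:\h\to S^{\frac d2+1}(\h)$ has rank at most $\frac d2$. It cannot be obtained from the facts already recorded: the triviality of the twist only gives $\hat C\,m=0$ (Lemma \ref{lem:twist=id}), i.e.\ $m$ is primitive for the symplectic form, and primitive middle forms need not be decomposable — already for $d=4$ the element $m=e_1f_1-e_2f_2$ is primitive with $\f=0$; similarly the $\phi$-invariance coming from condition A111 is only linear in $m$, and the non-degeneracy W4 of the Frobenius pairing is automatic for any cyclic $L=S(\h)m$ (the rank identity $\mathrm{rk}(\wedge m|_{S^r(\h)})=\mathrm{rk}(\wedge m|_{S^{d/2-r}(\h)})$ for the pairing \eqref{eq:top-form-pairing} makes the graded pieces $L_{\frac d2+r}$ and $L_{d-r}$ dual). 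The bound must instead be extracted from the $\gamma$-twisted commutativity \eqref{eq:mu00-commutative} and associativity (condition A000) of the multiplication $\Lmul=\mu_{00}$, whose explicit form $(g\,m)\Lmul\ell=\sum_{(g)}\pm\,\chi\!\big(S_S(g^{(2)}).\ell\big)\,g^{(1)}m$ one reads off by evaluating condition A011. These are exactly the axioms that couple $\Lmul$ to the copairing $\gamma=\exp(C)$ (hence to the symplectic form) beyond the mere primitivity of $m$; I would expand \eqref{eq:mu00-commutative} in powers of $C$ and show that associativity forces $\wedge m$ to have rank $\le\frac d2$, i.e.\ that $\f$ is coisotropic. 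I expect this to be the technical heart of the proof, possibly aided by Lemma \ref{lem:mult-of-nonzero-is-nonzero} in the spirit of Proposition \ref{prop:dimE=1|0}.

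Once $\dim\f=\frac d2$ is established, $m$ is decomposable, $m=c\,u_1\wedge\cdots\wedge u_{d/2}$ with $\f=\langle u_1,\dots,u_{d/2}\rangle$, and I would recover the symplectic statement from $\hat C\,m=0$: in a symplectic basis $\hat C$ is proportional to the symplectic bivector, and for a \emph{decomposable} $m$ the vanishing of its wedge with the symplectic bivector holds iff the support $\f$ is isotropic (in a basis adapted to an isotropic $\f$ every summand of $\hat C$ repeats one of the $u_i$, whereas a hyperbolic pair inside $\f$ would produce a nonzero term). Thus $\f$ is isotropic of dimension $\frac d2$, i.e.\ Lagrangian, giving part 2. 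Finally part 3 is immediate: the inclusion $L\subseteq\{g\in S(\h)\mid g\f=0\}=ann_{S(\h)}(\f)$ holds because $x.(g\,m)=\pm g\,(x\,m)=0$ for $x\in\f$, and both sides are the cyclic $S(\h)$-modules $S(\h)\,m$ and $S(\h)\,(u_1\cdots u_{d/2})$ respectively; since $m$ and $u_1\cdots u_{d/2}$ differ by the nonzero scalar $c$ they generate the same submodule, so $L=ann_{S(\h)}(\f)$.
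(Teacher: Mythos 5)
Your reduction of the lemma to the single inequality $\dim\f\ge\frac d2$ is correct, and your diagnosis that this inequality is the real content --- that primitivity $\hat C m=0$ alone does not force decomposability (your $d=4$ example $e_1f_1-e_2f_2$ is right), and that the bound must come from axioms coupling the multiplication to the copairing --- is accurate. But that is precisely the step you do not carry out: you only announce that you ``would expand \eqref{eq:mu00-commutative} in powers of $C$'' and ``expect this to be the technical heart of the proof.'' As it stands the proposal therefore has a genuine gap at its central point. Moreover, the conditions you point to are probably not the ones that can do the work: commutativity \eqref{eq:mu00-commutative} and A000 live entirely in sector $00$, where $\gamma$ acts trivially on $L\otimes L$ once $\f$ annihilates $L$, so they constrain $\mu_{00}$ without obviously seeing the symplectic form. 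The paper instead uses the mixed-sector associativity condition A101, $P=P\circ\{\gamma.(-)\otimes\sigma^{\mathrm{s.v.}}_{E,E}\}$, evaluated on $1\otimes gm$ for $g\in S^{\frac d2-1}(\h)$: since $\psi$ vanishes outside degree $\frac d2$ (Lemma \ref{lem:m-xi-chi}), only the linear term of $\gamma=\exp(C)$ survives and one obtains $\sum_{i=1}^d\chi(a_igm)\,b_im=0$ for all such $g$. Choosing the basis $\{a_i\}$ so that $a_1,\dots,a_n$ span $\f$, the elements $a_im$ with $i>n$ are linearly independent, and non-degeneracy of $(g,h)\mapsto\chi(gh)$ yields $g_i$ with $\chi(g_ia_jm)=\delta_{ij}$; hence $b_{n+1},\dots,b_d\in\f$ and $n\ge\frac d2$. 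This is the ingredient your outline is missing.

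The remaining pieces are fine and partly take a different route from the paper. Once $\dim\f=\frac d2$ and $m$ is decomposable, your derivation of isotropy from $\hat C m=0$ (the symplectic bivector wedged with a decomposable middle form vanishes iff its support is isotropic) is a correct alternative to the paper's argument, which instead notes that the vectors $b_{n+1},\dots,b_d$ produced above form a basis of $\f$ and pair to zero with $a_1,\dots,a_n$. Your part 1 (decomposability implies $m\in S^{\frac d2}(\f)$) and part 3 coincide with the paper's.
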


\begin{proof}
Since $m$ generates $L$ as a $S(\h)$-module, we can equally write $\f = \{ x \in \h \,|\,xm = 0 \}$.

\smallskip\noindent
1.\ Let $n = \dim\f$ and let $\{a_1,\dots,a_n\}$ be a basis of $\f$. Extend this to a basis $\{a_1,\dots,a_d\}$ of $\h$ and let $\{b_1,\dots,b_d\}$ be the dual basis with respect to $(-,-)_{\SF}$. The copairing $C$ takes the form \eqref{eq:def-copairing}.

Evaluate condition A101 on $1 \otimes gm$ for $g \in S^{\frac d2-1}(\h)$.
By Lemma \ref{lem:m-xi-chi}, $P(1 \otimes gm)=0$ (here $P$ is as in \ref{p}). Furthermore, $P(\gamma.(1 \otimes gm)) = P(C.(1 \otimes gm))$, so that
\be
	\forall g \in S^{\frac d2-1}(\h) ~:\quad
	\sum_{i=1}^d \chi(a_igm) \, b_im = 0 \ .
\ee
In other words, for all $g \in S^{\frac d2-1}(\h)$ we have $\sum_{i=n+1}^d \chi(ga_im) \, b_i \in \f$, where we used $a_im=0$ for $i=1,\dots,n$ and that $a_i g = \pm g a_i$. By construction, the elements $a_im \in S^{\frac d2+1}(\h)$, $i=n+1,\dots,d$ are linearly independent. 
Using non-degeneracy of the pairing $(g,h) \mapsto \chi(gh)$ on $S(\h)$,
we see that there are $g_i \in S^{\frac d2-1}(\h)$, $i=n+1,\dots,d$, such that $\chi(g_ia_jm) = \delta_{ij}$. Hence we must have 
\be\label{eq:m-from-lag-aux2}
	b_{n+1},\dots,b_d \in \f \ .
\ee
We conclude that $n \ge \tfrac d2$. 

Expand $m$ with respect to the above basis,
\be\label{eq:m-from-lag-aux1}
	m = \sum_{\eps_1 , \dots , \eps_d \in \{0,1\} , \sum_{i=1}^d \eps_i = \frac d2} \xi_{\eps_1,\dots,\eps_d}  \,
	a_1^{\,\eps_1} \cdots a_d^{\,\eps_d}
	\qquad , \quad \text{where}~~
	\xi_{\eps_1,\dots,\eps_d}  \in \Cb \ .
\ee
Since $a_i m =0$ for $i=1,\dots,n$, we must have $\xi_{\eps_1,\dots,\eps_d} =0$ 
if any one of $\eps_1,\dots,\eps_n$ is zero.
Thus each non-zero summand in \eqref{eq:m-from-lag-aux1} must at least contain the product $a_1 \cdots a_n$. In particular, $n \le \frac d2$, which, when combined with the above estimate, gives $n  = \frac d2$ and
\be\label{eq:m-product-ai}
	m \in \Cb \, a_1 \cdots a_n \ .
\ee

\noindent
2.\ By \eqref{eq:m-from-lag-aux2}, the basis elements dual to $a_{n+1},\dots,a_d$ lie in $\f$,
	and since $n = \frac d2$ they
form a basis of $\f$. Since $(a_i,b_j)=0$ for all $1 \le i \le n < j \le d$, it follows that $\f$ is Lagrangian. 

\smallskip\noindent
3.\ The condition that $a_ig=0$ for $i=1,\dots,\frac d2$ is equivalent the condition that each monomial in $g$ contains the factor $a_1\cdots a_{d/2}$. Since $L$ is generated by $m$, together with \eqref{eq:m-product-ai} this shows the claim.
\end{proof}

\subsection{Uniqueness and existence}\label{sec:ex-unique}

\subsubsection*{Uniqueness}

\begin{lemma}\label{lem:unique-from-f}
Let $H,H'$ be two algebras satisfying W1--W5, and let $\f = ann_\h(H_0)$, $\f' = ann_\h(H'_0)$. Then $H$ and $H'$ are isomorphic as algebras if and only if $\f=\f'$. 
\end{lemma}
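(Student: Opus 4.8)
The plan is to treat the two implications separately; the implication $H\cong H'\Rightarrow\f=\f'$ is immediate, and all the content sits in the converse, where the entire algebra structure has to be reconstructed from $\f$ up to a rescaling. For the easy direction, note that any algebra isomorphism $\Phi\colon H\to H'$ is in particular an isomorphism in $\SF(\h)$. Since $\SF(\h)=\SF_0(\h)\oplus\SF_1(\h)$ is a direct sum of categories, $\Phi$ is block diagonal, $\Phi=\Phi_0\oplus\Phi_1$, and its even block $\Phi_0\colon H_0\to H'_0$ is an isomorphism in $\SF_0(\h)=\Rep^\mathrm{fd}(S(\h))$, i.e.\ of $S(\h)$-modules. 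As $ann_\h(-)$ is invariant under $S(\h)$-module isomorphisms, $\f=ann_\h(H_0)=ann_\h(H'_0)=\f'$.

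For the converse I would assume $\f=\f'$ and first record what is already fixed. By Proposition \ref{prop:dimE=1|0} we have $H_1=H'_1=T$, and by Lemma \ref{lem:kernel-is-Lag} both $H_0$ and $H'_0$ are, as $S(\h)$-modules, the annihilator $L(\f)=\{g\in S(\h)\mid g\f=0\}$; fix identifications $H_0\cong L(\f)\cong H'_0$. Under these, the generators $m$ and $m'$ both lie in the one-dimensional space $S^{\frac d2}(\f)$ (Lemma \ref{lem:m-homog}), so $m'=t\,m$ for some $t\in\Cb^\times$. Thus $\mu_{11}$ is $g\mapsto g.m$ while $\mu'_{11}$ is $g\mapsto g.m'=t\,(g.m)$.

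The key step is a rigidity statement: once $m$ is fixed, the remaining structure maps $\mu_{00}$, $\chi$ and $\eta$ are determined up to a single rescaling. Indeed, $\chi$ is supported on $S^d(\h)\cap L=\Cb\,1_H$ with $\chi(1_H)=1$ (Lemma \ref{lem:m-homog} together with \eqref{eq:1L-unit}), hence fixed by one scalar; condition A110, read on the elements $g.m$ — which span $L$ by surjectivity of $\mu_{11}$ (Corollary \ref{cor:m-generates}) — expresses $\mu_{00}$ linearly in $\chi$ and $m$; and $1_H$ is then the unit of $\mu_{00}$. A homogeneity count in these relations shows that the only surviving freedom is the simultaneous rescaling $\chi\mapsto\lambda\chi$, $\mu_{00}\mapsto\lambda\,\mu_{00}$, $1_H\mapsto\lambda^{-1}1_H$ for $\lambda\in\Cb^\times$, and that this rescaling is realised by the algebra automorphism $\Phi_0=s^2\,\id_L$, $\Phi_1=s\,\id_T$ with $s^2=\lambda^{-1}$. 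Using this gauge freedom on $H'$ I would arrange that $H$ and $H'$ carry identical data $(\mu_{00},\chi,\eta)$, so that the two algebras differ only through $m$ versus $m'=t\,m$.

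With this normalisation the isomorphism is explicit: take $\Phi_0=\id_L$ and $\Phi_1=t^{-1/2}\,\id_T$, which exists since $\Cb$ is algebraically closed. Compatibility in sectors $00$, $01$, $10$ is immediate because $\mu_{00}$, $\chi$ and $\eta$ now agree; the only nontrivial check is sector $11$, where $\Phi_1*\Phi_1$ acts on $S(\h)\otimes T\otimes T$ as $t^{-1}\id$, exactly cancelling the factor $t$ in $m'=t\,m$, so that $\Phi_0\circ\mu_{11}=\mu'_{11}\circ(\Phi_1*\Phi_1)$ holds. I expect the main obstacle to be the rigidity claim of the previous paragraph: one must verify that the product defined from $\chi$ via A110 is well defined on $L$ (independent of the presentation of an element as $g.m$) and that the rescalings entering the unit axiom, A110 and A111 are mutually consistent, so that the structure genuinely collapses to a single rescaling orbit.
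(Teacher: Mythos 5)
Your proposal is correct and follows essentially the same route as the paper: both directions hinge on the rigidity statement that $\chi$, $\mu_{00}$ and $\mu_{11}$ are determined by $1_H\in S^d(\h)$ and $m\in S^{\frac d2}(\f)$ (via $\chi(1_H)=1$, A110 with surjectivity of $M$, and $\mu_{11}=M$), after which the one-dimensionality of $S^d(\h)$ and $S^{\frac d2}(\f)$ leaves only two scalars, absorbed by a componentwise rescaling $f_0=\lambda_0\,\id_L$, $f_1=\sqrt{\lambda_0/\lambda_1}\,\id_T$ exactly as in your two-step normalisation. The well-definedness worry you raise at the end is not actually needed for this lemma, since both $H$ and $H'$ are assumed to exist as algebras; it only arises in the separate existence proof.
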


\begin{proof}
We will write $H = L \oplus T$ and $H' = L' \oplus T'$ with $L,L' \in \SF_0(\h)$.

The implication `$\Rightarrow$' is clear. 

For `$\Leftarrow$', 
as in Lemma \ref{lem:kernel-hatC} (together with Proposition \ref{prop:dimE=1|0}), we will think of $H$ as a submodule of $S(\h)$ generated by $m \in S^{\frac d2}(\f)$ (Lemma \ref{lem:kernel-is-Lag}). Let $1_H \in S^{d}(\h)$ be the unit of $H$. We will first show that the multiplication $\mu$ of $H$ is uniquely determined by $m$ and $1_H$.

The map $\chi \in S^d(\h)^*$ is fixed by \eqref{eq:1L-unit}: $\chi(1_H)=1$.
Let $M : S(\h) \to L$ be the map $M(g) = gm$. By Corollary \ref{cor:m-generates}, $M$ is surjective and thus condition A110 determines $\mu_{00}$ uniquely.
The map $\mu_{11}$ is fixed by \eqref{eq:mu_11-def-via-m} to be $\mu_{11}=M$.

Let now $m',1_{H'} \in S(\h)$ be the corresponding elements for $H'$. Since $\f=\mathfrak{f'}$, we have $m,m' \in S^{\frac d2}(\f)$, and so there is a $\lambda_1 \in \mathbb{C}^\times$ with $m' = \lambda_1 \, m$. Analogously, since $1_H,1_H' \in S^d(\h)$, there is a $\lambda_0 \in \mathbb{C}^\times$ such that $1_{H'} = \lambda_0 \, 1_H$.
Consider the map $f : H \to H'$ with components $f_0 : L \to L'$ and $f_1 : T \to T'$ given by
\be
	f_0 = \lambda_0 \, \id_L 
	\quad , \quad
	f_1 = \sqrt{\lambda_0/\lambda_1} \,\, \id_T \ .
\ee
It is straightforward to check that $f$ is an algebra isomorphism in $\SF(\h)$.
\end{proof}

\subsubsection*{Existence}

Let $\f \subset \h$ be a Lagrangian subspace, and let $m \in S^{\frac d2}(\f)$, $1_H \in S^d(\h)$ be non-zero elements. Set $L := \langle m \rangle$ and $H := L \oplus T$. We will show that there is an algebra structure on $H$ which satisfies W1--W5.

Fix a symplectic basis $\{ a_1,\dots,a_d \}$ of $\h$ such that $\{a_{d/2+1},\dots,a_{d}\}$ spans $\f$, $(a_i,a_j)_{\SF} = \pm 1$ for $j = i \pm \frac d2$ and zero else, and $m = a_{d/2+1} \cdots a_d$. Then the copairing \eqref{eq:def-copairing} takes the form
\be\label{eq:C-for-existence-proof}
	C = - \sum_{i=1}^{d/2} a_i \otimes a_{i+d/2} + \sum_{i=d/2+1}^{d} a_i \otimes a_{i-d/2} \ .
\ee
Define $M(g) = gm$ and $\psi(g) = \chi(M(g))$.

\medskip\noindent
{\em Unit and multiplication:}  $\eta : \one \to H$ is fixed by the choice of $1_H$ and, as above, $\chi \in S^d(\h)^*$ is fixed by $\chi(1_H)=1$. The components $\mu_{01}$, $\mu_{10}$ and $\mu_{11}$ of the multiplication are defined by \eqref{eq:mu01-10} and \eqref{eq:mu_11-def-via-m}. It remains to check that 
	a (necessarily unique)
solution $\mu_{00}$ to A110 actually exists. This is done in the next lemma, for which we
denote the left hand side of A110 by $f : S(\h) \otimes L \to L$.

\begin{lemma}
$\ker (M \otimes \id_L) \subset \ker f$.
\end{lemma}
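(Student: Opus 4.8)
The plan is to make the map $f$ completely explicit and then to show that it factors through $M\ot\id_L$. Since at this point $E=T$ is one-dimensional and even, $m(e\ot f)$ is just a fixed element $m\in L$, the action $\rho^L$ is the multiplication of $S(\h)$ restricted to the submodule $L=\langle m\rangle$, and $\chi$ is supported on the top degree $S^d(\h)$ with $\chi(1_H)=1$ (Lemma~\ref{lem:m-homog}, \eqref{eq:1L-unit}). Unwinding the definition of $P$ in \ref{p} together with the antipode and coproduct legs in the left-hand side of A110, one obtains, in Sweedler notation $\Delta_S(g)=\sum g^{(1)}\ot g^{(2)}$,
\[
 f(g\ot a)\;=\;\sum_{(g)}\chi\big(S_S(g^{(2)})\,a\big)\,g^{(1)}m \qquad (g\in S(\h),\ a\in L).
\]
Recall from Lemma~\ref{lem:kernel-is-Lag} and the choice of basis in \eqref{eq:C-for-existence-proof} that $m=a_{d/2+1}\cdots a_d$ spans $S^{\frac d2}(\f)$, that $L=\{g:g\f=0\}=S(W)\,m$ for $W=\mathrm{span}(a_1,\dots,a_{d/2})$, and that $M(g)=gm$ has kernel the ideal $(\f)$ generated by $\f$.

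The key idea is to exploit the Hopf-algebra decomposition $S(\h)=S(W)\ot S(\f)$ and the projection $\pi=\id_{S(W)}\ot\eps_S:S(\h)\to S(W)$, which is both an algebra and a coalgebra morphism with kernel exactly $(\f)=\ker M$; note $M=(\,\cdot\,m)\circ\pi$ with $(\,\cdot\,m):S(W)\xrightarrow{\sim}L$. I would then observe that each of the two factors in the formula for $f$ depends on $g$ only through $\pi$. First, $g^{(1)}m=\pi(g^{(1)})\,m$, because any monomial of $g^{(1)}$ carrying a factor from $\f$ is annihilated by $m$ (it would repeat one of $a_{d/2+1},\dots,a_d$). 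Second, $\chi\big(S_S(g^{(2)})a\big)=\chi\big(S_S(\pi(g^{(2)}))a\big)$, because $a\in S(W)m$ already contains the full product $a_{d/2+1}\cdots a_d$, so any $\f$-factor coming from $g^{(2)}$ pushes $S_S(g^{(2)})a$ out of the top degree $S^d(\h)$ on which alone $\chi$ is non-zero.

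Putting these together gives $f(g\ot a)=\sum_{(g)}\chi\big(S_S(\pi(g^{(2)}))a\big)\,\pi(g^{(1)})m$, and since $\pi$ is a coalgebra morphism, $\sum_{(g)}\pi(g^{(1)})\ot\pi(g^{(2)})=(\pi\ot\pi)\Delta_S(g)=\Delta_S(\pi(g))$. Hence the sum collapses to $f(g\ot a)=f(\pi(g)\ot a)$ for all $a\in L$. In particular, if $g\in\ker M=\ker\pi$ then $\pi(g)=0$ and $f(g\ot a)=0$ for every $a$; since over a field $\ker(M\ot\id_L)=\ker M\ot L$, this yields $\ker(M\ot\id_L)\subset\ker f$, as claimed (and, with the surjectivity of $M$ from Corollary~\ref{cor:m-generates}, shows that the induced $\mu_{00}$ exists and is unique).

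The main obstacle I anticipate is bookkeeping rather than the idea: one must track the super/exterior signs when separating the $S(W)$- and $S(\f)$-parts of the Sweedler legs and when moving $S_S$ and $m$ past them (these turn out benign precisely because only $\f$-degree-zero legs survive), and one must verify cleanly that $\pi=\id\ot\eps_S$ is genuinely a morphism of coalgebras so that the collapse $(\pi\ot\pi)\Delta_S=\Delta_S\,\pi$ is legitimate. If one prefers to avoid the abstract factorization, the same computation can be done in the fixed symplectic basis by showing that a term $g^{(1)}\ot g^{(2)}$ contributes to $f$ only when both legs lie in $S(W)$, and that the $S(W)\ot S(W)$-component of $\Delta_S(g)$ is exactly $\Delta_S$ of the $S(W)$-component of $g$; this identification is the step requiring the most care.
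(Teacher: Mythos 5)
Your proof is correct and rests on the same two observations as the paper's: an $\f$-generator in the left Sweedler leg annihilates $m$, while one in the right leg acts as zero on $L$; the paper simply checks this on generators $a_jg$ of $\ker M$ via $\Delta(a_jg)=(a_j\otimes 1+1\otimes a_j)\Delta(g)$, whereas you package the identical facts as the statement that $f$ factors through the coalgebra projection $\pi=\id_{S(W)}\otimes\eps_S$. So this is essentially the paper's argument in slightly more structural clothing.
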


\begin{proof}
The kernel of $M$ is spanned by all words in the $a_i$ which contain at least one factor $a_j$ with $j > \frac d2$. It is therefore enough to check that $f(a_jg \otimes u)=0$ for all $j > \frac d2$, $g \in S(\h)$ and $u \in L$. This in turn follows since $\Delta(a_jg) = (a_j \otimes \one + \one \otimes a_j) \Delta(g)$ and the summand $(a_j \otimes \one)\Delta(g)$ gives zero as $a_j$ multiplies $m$, while the summand $(\one \otimes a_j) \Delta(g)$ gives zero as as $a_j$ acts as zero on $L$.
\end{proof}

We thus obtain a unique map $\mu_{00} : L \otimes L \to L$ such that 
\be\label{eq:existence-mu00-def}
	\mu_{00}(M(g) \otimes M(h)) = \sum_{(g)} 
	g'm \, \psi\big(S(g'')h\big) \ ,
\ee	
where we used Sweedler-notation $\Delta(g) = \sum_{(g)} g' \otimes g''$. It is easy to see that $\mu_{00}$ is an $S(\h)$-module map. 

At this point the unit and multiplication morphisms of $H$ are fixed, and we can proceed to verify unitality, associativity and W1--W5.

\medskip\noindent
{\em W1:} Holds by construction.

\medskip\noindent
{\em W2,W3: Commutativity and trivial twist:}  
Since $a_i$ acts as zero on $L$ for $i>\frac d2$,
it is clear from  \eqref{eq:C-for-existence-proof} that $C$ acts as zero on $L \otimes L$ and that $\hat C$ acts as zero on $L$.  
Lemma \ref{lem:twist=id} shows that $H$ has trivial twist if and only if $\beta=1$. 
For commutativity, we saw in Section \ref{sec:comm-twist} that for $E=T$, the only non-trivial condition arises in sector 00 (see \eqref{eq:mu00-commutative}). Since $\gamma$ acts as identity on $L \otimes L$, for commutativity of $L$  we need to show:

\begin{lemma}\label{lem:mu-super-comm}
The multiplication of $H$ satisfies $\mu_{00} = \mu_{00} \circ \sigma^{\mathrm{s.v.}}_{L,L}$.
\end{lemma}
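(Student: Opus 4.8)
The plan is to verify the single commutativity identity that is not yet forced by the construction. Recall that $\mu_{00}$ was \emph{defined} by imposing the equality of the first two lines of condition A110, i.e.\ by $\mu_{00}\circ(M\otimes\id_L)=f$, where $f$ is the left-hand side of A110 and $M(g)=g.m$. Since $M$ maps onto $L=\langle m\rangle$, the desired identity $\mu_{00}=\mu_{00}\circ\sigma^{\mathrm{s.v.}}_{L,L}$ is equivalent, after precomposing with the surjection $M\otimes\id_L$, to the equality of the \emph{second and third} lines of A110. That equality was a consequence of associativity in the general analysis, but associativity is not yet available at this point of the existence proof, so I would establish it directly. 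Concretely, writing $u=g.m$ and $v=h.m$ with $g,h\in S(\h)$, the statement to prove is the symmetry of $f$,
\[
 \sum_{(g)}\psi\big(S_S(g'')\,h\big)\,g'm \;=\; (-1)^{|u||v|}\sum_{(h)}\psi\big(S_S(h'')\,g\big)\,h'm \ ,
\]
where $\Delta_S(g)=\sum_{(g)}g'\otimes g''$ and $\psi(x)=\chi(x\,m)$.

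First I would record two simplifications. Since $\beta=1$ forces $d\in4\Zb$, the element $m\in S^{\frac d2}(\h)$ is parity-even, so $|u|=|g|$, $|v|=|h|$, and the target sign is simply $(-1)^{|g||h|}$. Writing $\h=\f\oplus\f'$ with $\f'=\langle a_1,\dots,a_{d/2}\rangle$ a Lagrangian complement and $m=a_{d/2+1}\cdots a_d$, every element of $L$ is uniquely of the form $a_I\,m$ with $I\subseteq\{1,\dots,\tfrac d2\}$, so it suffices to check the identity on the monomials $g=a_I$, $h=a_J$.

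The key simplification is that $\psi\big(S_S(g'')h\big)=\chi\big(S_S(g'')h\,m\big)$ is nonzero only when $S_S(g'')h$ is a multiple of the top monomial $a_1\cdots a_{d/2}$ of $S(\f')$, since by construction $\chi$ is supported in top degree $S^d(\h)$ and $m=a_{d/2+1}\cdots a_d$ must supply the remaining factors. For $g=a_I$, $h=a_J$ this selects a single coproduct term, namely $g''=a_{J^c}$ (complement taken in $\{1,\dots,\tfrac d2\}$), which requires $I\cup J=\{1,\dots,\tfrac d2\}$ and leaves $g'=a_{I\setminus J^c}=a_{I\cap J}$. Hence both $\mu_{00}(a_Im\otimes a_Jm)$ and $\mu_{00}(a_Jm\otimes a_Im)$ are supported on $a_{I\cap J}\,m$ and share the same, manifestly $I\!\leftrightarrow\!J$-symmetric, nonvanishing criterion $I\cup J=\{1,\dots,\tfrac d2\}$. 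The whole lemma thus reduces to checking that the two surviving scalar coefficients agree up to the factor $(-1)^{|I||J|}$.

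I would package this last step using the pairing $\beta_0(p,q)=\chi(p\,q\,m)$ on $S(\f')$, which is non-degenerate, associative ($\beta_0(pq,r)=\beta_0(p,qr)=\chi(pqrm)$), and super-symmetric ($\beta_0(p,q)=(-1)^{|p||q|}\beta_0(q,p)$), and which detects all of $L$ through the functionals $u\mapsto\chi(a_K.u)=\beta_0(a_K,-)$. Pairing the displayed identity against $\chi(a_K.(-))$ for all $K$ converts it into the Hopf-algebraic identity
\[
 \sum_{(g)}\beta_0(a_K,g')\,\beta_0\big(S_S(g''),h\big)
 \;=\;(-1)^{|g||h|}\sum_{(h)}\beta_0(a_K,h')\,\beta_0\big(S_S(h''),g\big)\ ,
\]
which follows from the associativity and super-symmetry of $\beta_0$ together with the (super-)cocommutativity of $\Delta_S$. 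I expect the only genuine work—and the main obstacle—to be the careful bookkeeping of the reordering signs of the odd generators $a_i$ and of the antipode $S_S$ in this final identity; the support analysis above forces everything else, including the fact that both sides vanish simultaneously.
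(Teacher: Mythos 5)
Your reduction is sound up to the last step: restricting to the basis $\{a_I m\}$ of $L$, observing that $\chi$ is supported in top degree so that exactly one coproduct term survives (the one with $g''=a_{J^c}$), and concluding that $\mu_{00}(a_Im\otimes a_Jm)$ and $\mu_{00}(a_Jm\otimes a_Im)$ are both multiples of $a_{I\cap J}\,m$ with the common vanishing criterion $I\cup J=\{1,\dots,\tfrac d2\}$ --- all of this is correct, and it is essentially the same key fact that the paper isolates as \eqref{eq:psi-omit-coproduct}. The gap is that your argument stops exactly where the remaining content lies: the equality of the two surviving scalars up to $(-1)^{|I||J|}$ is asserted, not verified. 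Worse, the justification you offer for the final ``Hopf-algebraic identity'' --- that it follows from associativity and super-symmetry of $\beta_0$ together with super-cocommutativity of $\Delta_S$ --- is not sufficient. For a general associative, super-symmetric pairing of the form $\beta_0(p,q)=T(pq)$ on a super-cocommutative Hopf algebra the identity is simply false: for instance the coproduct term with $g''=1$ contributes $\beta_0(a_K,g)\,\beta_0(1,h)$, and none of the three listed properties forces it to cancel against anything. What kills it here is precisely the degree support of $\chi$ (so that $\beta_0(1,h)=\chi(hm)=0$ unless $h$ already has degree $\tfrac d2$) together with $d\in4\Zb$, i.e.\ the same input as your support analysis. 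So the concluding identity must still be checked by the explicit reordering-sign computation that you defer, and as written the proof is incomplete.

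For comparison, the paper closes this same gap without any monomial bookkeeping: starting from the commutativity condition in the form \eqref{eq:two-ways-mu00}, it precomposes with the invertible endomorphism $g\otimes h\mapsto\sum_{(h)}gh'\otimes h''$ and uses the antipode axiom $\sum_{(h)}h'S(h'')=\eps_S(h)1$, which collapses the right-hand side to $\psi(g)\,hm$ and reduces the whole lemma to the single statement \eqref{eq:psi-omit-coproduct}; the Hopf-algebra axioms then absorb all the reordering signs automatically, and \eqref{eq:psi-omit-coproduct} itself is proved by exactly your support observation. If you prefer to keep the basis-dependent route, you must actually compare the two shuffle signs (the sign of the coproduct term $a_{I\cap J}\otimes a_{J^c}$ in $\Delta_S(a_I)$, the sign from $S_S$, and the sign of reordering $a_{J^c}a_J$ into $a_1\cdots a_{d/2}$) with their $I\leftrightarrow J$ counterparts, using that on the support $|I|+|J|\equiv\tfrac d2\equiv 0 \pmod 2$; this is doable but is the actual proof, not an afterthought.
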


\begin{proof}
Substituting \eqref{eq:existence-mu00-def} and using commutativity and cocommutativity of $S(\h)$, we see that commutativity of $L$ is equivalent to
\be\label{eq:two-ways-mu00}
	\sum_{(g)} g'm \, \psi\big(S(g'')h\big) 
	= \sum_{(h)} \psi\big(gS(h')\big) \, h''m  \ .
\ee
Composing both sides with the invertible endomorphism of $S(\h) \otimes S(\h)$ given by $g \otimes h \mapsto \sum_{(h)} gh' \otimes h''$ gives the equivalent condition
\be
	\sum_{(g)} g'h'm \, \psi\big(S(g''h'')h'''\big) 
	= \psi(g) \, hm  \ .
\ee
The left hand side simplifies further to $\sum_{(g)} g'hm \, \psi(g'')$ (the antipode can be omitted since it is given by parity involution and $\psi$ is an even map to $\Cb^{1|0}$). We now claim that
\be\label{eq:psi-omit-coproduct}
	\sum_{(g)} g'm \, \psi(g'') = m \, \psi(g) \ ,
\ee
which implies the lemma. To see this, first note that in the basis of $S(\h)$ arising from our choice $\{a_1,\dots,a_d\}$ above, $\psi$ is non-zero only on $a_1\cdots a_{d/2}$. This means that the only terms contributing to $\sum_{(g)} g' \, \psi(g'')$ have $g'$ being either $1$ or containing generators $a_i$ with $i > \frac d2$. Since the latter annihilate $m$, the claim follows.
\end{proof}

\medskip\noindent
{\em Unitality and associativity:}  That $1_H$ is the unit for $H$ is immediate in sectors 01 and 10, because $\chi(1_H)=1$. For sector 00 the unit property follows from the observation that $1_H$ is a cointegral for $S(\h)$.

Next we check the associativity conditions A000--A111. Condition A110 and A011 hold by definition of $\mu_{00}$ and by Lemma \ref{lem:mu-super-comm}. Conditions A010 and A101 are a consequence of $C$ acting trivially on $L \otimes L$. The equivalent conditions A100 and A001 follow 
	from composing \eqref{eq:existence-mu00-def} with $\chi$ and using \eqref{eq:psi-omit-coproduct}. 
The remaining two conditions require a small calculation:

\medskip\noindent
A000: Recall the two equivalent ways to write the product $\mu_{00}$ given in \eqref{eq:two-ways-mu00}. When substituting the product into $(M(g) \Lmul M(h)) \Lmul M(k)$ and $M(g) \Lmul (M(h) \Lmul M(k))$, first use the variant of $\mu_{00}$ where the coproduct does not act on $M(g)$ or $M(k)$ and then the other variant.
A000 then reduces to coassociativity of $S(\h)$.

\medskip\noindent
A111: Recall that $\phi$ maps $S^k(\h)$ to $S^{d-k}(\h)$. 
Thus, unless $g \in S^{\frac d2}(\h)$, both $\chi(g.m)$ and $\chi(\phi(g).m)$ are zero. Taking $g$ to be a monomial in the generators $\{a_i\}$, we see that $g.m =0$ unless $g \in \Cb\, a_1 \cdots a_{d/2}$. 

For $g \in S^{\frac d2}(\h)$, the explicit expression for $\phi(g)$ is (see \cite[Sect.\,5.2]{Davydov:2012xg})
\be
	\phi(g) = \beta^2 \, (-1)^{\frac{d(d+2)}8} \hspace{-.5em} \sum_{i_1<\cdots<i_{d/2}} \hspace{-.5em} \lambda(a_{i_1} \cdots a_{i_{d/2}} g) \, b_{i_1} \cdots b_{i_{d/2}} \ .
\ee
Here $\lambda$ was given in \eqref{eq:lambda-def} and $\{b_i\}$ is the dual basis: $b_k = a_{k+\frac d2}$ for $k=1,\dots,\frac d2$ and  $b_k = -a_{k-\frac d2}$ for $k=\frac d2+1,\dots,d$.
This shows that $\phi$ maps monomials in $\{a^i\}$ to monomials, and that the monomial $a_1 \cdots a_{\frac d2}$ is an eigenvector. It remains to show that the eigenvalue is $1$.

Similar to the computation in \cite[Sect.\,5.2]{Davydov:2012xg}
	(see also the proof of Lemma \ref{lem:C-trivial-only-above-d2-2})
, one first verifies that
\be
	\hat C^{\frac d2} =  (-1)^{\frac{d(d+2)}8} \, 2^{\frac d2} \big(\tfrac d2\big)! \,  a_1 \cdots a_d \ .
\ee
This shows that $\lambda(a_1 \cdots a_d) =  (-1)^{\frac{d(d+2)}8}$, and so indeed $\phi(a_1\cdots a_{d/2}) = a_1\cdots a_{d/2}$ (we used that $\beta=1$, and that since $d \in 4\Zb$, $a_1\cdots a_{d/2}$ is an even element).

\medskip\noindent
{\em W4: Non-degenerate pairing on $H$:}  
Let $\kappa : L \to \one$ be non-zero  on $m$ and zero on $S^{>\frac d2}(\h)$. This is a map of $S(\h)$-modules. Consider the pairing
\be
d := \kappa \circ \mu ~:~ H * H \to \one \ .
\ee
By definition, $d$ is non-degenerate if there exists a copairing $b : \one \to H*H$ such that
\be\label{eq:H-pairing-nondeg-def}
	(d * \id_H) \circ \alpha_{H,H,H} \circ (\id_H * b) = \id_H =
	(\id_H * d) \circ \alpha_{H,H,H}^{-1} \circ (b*\id_H)
\ee
Let us write $d$ as $d = d_0+d_1$ with $d_0:L *L \to \one$ and $d_1 : T*T \to \one$ and analyse non-degeneracy separately for $d_0$ and $d_1$.

For $d_0$, the above condition is equivalent to $d_0$ being non-degenerate as a bilinear map $L \otimes L \to \Cb$. To verify this we work instead with the pairing $d_0' := d_0 \circ (M \otimes M) : S(\h) \otimes S(\h) \to \Cb$. Substituting the definition of $\mu_{00}$ gives
\be
d'_0 = \kappa(m) \, \psi \circ \mu_S \circ (S_S \otimes \id) \circ (M \otimes M) \ .
\ee
$M$ maps the subspace (not submodule) of $S(\h)$ with basis $a_{i_1} \cdots a_{i_m}$ for $1 \le i_1 < i_2 <\dots < i_m \le \frac d2$ isomorphically to $L$. Since $\psi$ is non-zero only on $a_1 a_2 \cdots a_{d/2}$, we get on the above basis:
\be
	d_0'(a_{i_1} \cdots a_{i_m},a_{j_1} \cdots a_{j_n}) =
	\begin{cases}
	\neq 0 & m+n=\tfrac d2 \text{ and } \{i_1,\dots,i_m,j_1,\dots,j_n \} = \{1,2,\dots,\frac d2\}
	\\
	0 & \text{else}
	\end{cases}
\ee
Thus the pairing $d_0$ is non-degenerate on $L \otimes L$.

For $d_1$, we need to work with \eqref{eq:H-pairing-nondeg-def}. For the copairing $b_1 : \one \to T*T = S(\h)$ we have, up to normalisation, only one choice: to be compatible with the $S(\h)$-action, $b_1$ has to map 1 to a top-degree element $t$ of $S(\h)$. The first equality in \eqref{eq:H-pairing-nondeg-def} becomes
\be\label{eq:pairing-nondeg-TT}
	\kappa(\phi(t)m) = 1 \ .
\ee
Since $\phi(t)$ is non-zero and in degree 0, $\phi(t)m$ is non-zero and proportional to $m$. We can now choose the normalisation of $t$ such that \eqref{eq:pairing-nondeg-TT} holds.
The second equality in  \eqref{eq:H-pairing-nondeg-def} follows from the first since $c_{T,T} \circ b_1 = b_1$ (as $\sigma$ acts as the identity on $t$ and $\beta=1$) and since $\mu$ is commutative.

\medskip\noindent
{\em W5: $H$ is simple as a left module over itself:} We will show that any morphism $f : H \to X$ of $H$-modules in $\SF(\h)$ is injective or zero. 

Suppose $f$ is zero on $T$. Since it is an $H$-module map, it must also be zero on the image of $\mu_{11}$ and in particular on $m$. But $m$ generates $L$ as an $S(\h)$-module, and so $f$ is zero on $L$ as well.

Suppose now that $f$ is non-zero on $T$. Then 
$f$ has to be non-zero on $1_H$, since $1_H$ generates $H$ as an $H$-module. 
Since $1_H \in L = \langle m \rangle$, $f$ is non-zero also on $m \in L$. But then $f$ has to be injective on $L$, for if a non-zero $u \in L$ is mapped to zero, then also $f(gu)=0$ for all $g \in S(\h)$, but there is a $g$ such that $gu=1_H$. Since $T$ is simple and $f$ is non-zero on $T$, it is also injective on $T$.

\medskip

We denote the algebra $H$ constructed above by $H(\f)$. It is determined by $\f$ up to isomorphism.
This completes the proof of Theorem \ref{thm:H-unique}.

\section{Local modules of $H(\f)$}\label{sec:locmod}

Recall from Section \ref{sec:commalg} the notation ${_A\C}$ (resp.\ ${^\mathrm{loc}_A\C}$) for modules (resp.\ local modules) over an algebra $A$ in a (braided)
	monoidal
category $\C$. 
Let $\beta = 1$ (and hence $d \in 4\Zb$) 
	and fix a Lagrangian subspace $\f\subset\h$. Write $H = H(\f)$ for the corresponding algebra as given by Theorem \ref{thm:H-unique}.

\subsection{Simple modules}

Let $k$ be an algebraically closed field.
A {\em tensor category} is a rigid monoidal $k$-linear abelian category with simple unit.
A tensor category is  {\em finite} if it  is equivalent as a $k$-linear category to the category of finite dimensional modules over a finite dimensional $k$-algebra \cite{eo}.
Internally the finiteness is finite length together with the existence of projective covers for all (simple) objects (see \cite{eo} for details).

For example, the category $\SF(\h)$ of symplectic fermions is a finite tensor category. Indeed, $\SF_0(\h)$ is the category of modules over the bosonisation of $S(\h)$, while $\SF_1(\h)$ is the category of modules over the group algebra of the group of order 2. Rigidity is shown in \cite[Sect.\,3.6]{Davydov:2012xg}.

\ble\lb{sm} 
Let $A$ be an algebra in a finite tensor category $\C$.
For any simple $A$-module $M$ there is a simple object $S\in\C$ together with an epimorphism of $A$-modules $A\ot S\to M$.
\ele
\bpf
Let $S\subset M$ be a simple subobject of $M\in\C$. The module map $A\ot M\to M$ induces a non-zero morphism $A\ot S\to M$, which is a morphism of $A$-modules. Its image is a 
	non-zero
submodule of $M$. Hence the image coincides with $M$.
\epf

\bre\lb{cfm}
It follows from Lemma \ref{sm} that any simple $A$-module is a direct summand of the cosocle (the maximal semi-simple quotient) of the free $A$-module
	$A\ot S$ 
for a simple object $S\in\C$.
\ere

Now let $\C=\SF(\h)$ be the category of symplectic fermions and let $A=H$
	as fixed above.
The four simple objects of $\SF(\h)$ give rise to the free $H$-modules
\be\label{eq:free-modules-SF}
H\ast\one  = H \quad ,\qquad H\ast\Pi\one = \Pi H \quad ,\qquad H\ast T\quad ,\qquad H\ast\Pi T\ .
\ee
By Theorem \ref{thm:H-unique}, $H$ is simple as a module over itself.
Similarly $\Pi H$ is a simple $H$-module (since a submodule $N\subset \Pi H$ gives rise to a submodule $\Pi N\subset H$).
In the following we will use Remark \ref{cfm} to prove:

\bth\lb{smsf}
The isomorphism classes of simple $H$-modules
	in $\SF(\h)$
are represented by 
$H$ and $\Pi H$.
\eth

The proof will be given after some more preparation.

\medskip

To compute cosocles of the last two free modules in \eqref{eq:free-modules-SF} we will use the isomorphisms
\be\label{eq:reduction-iso}
\begin{array}{rcccl}
	{_A\C}(A\ot X,A\ot Y) & \simeq &  \C(X,A\ot Y) & \simeq & \C(X\ot {^*Y},A) \\
	f &\mapsto& f \circ ( \eta\ot\id) &\mapsto& u(f) \ ,
\end{array}
\ee
where (we write out associators, but not unit constraints)
\begin{align}
	u(f) = \big[ 
	&X\ot {^*Y}, \xrightarrow{ \eta\ot\id}
	A \ot (X\ot {^*Y}) \xrightarrow{\alpha_{A,X,{^*Y}}}
	(A \ot X)\ot {^*Y}
	\nonumber \\ &
	 \xrightarrow{f\ot \id }
	(A \ot Y)\ot {^*Y}  \xrightarrow{\alpha_{A,Y,{^*Y}}^{-1}}
	A \ot (Y\ot {^*Y})  \xrightarrow{\id\ot ev_Y}
	A \big] \ .
\end{align}
For $X=Y$, the left hand side of \eqref{eq:reduction-iso} is an algebra, and the induced multiplication on the right hand side is the convolution product, $u(f \circ g) = \mu_A \circ (u(f) \otimes u(g)) \circ \Delta_{X\ot {^*X}}$. Here $\Delta_{X\ot {^*X}}$ denotes the Frobenius coproduct on $X\ot {^*X}$,
\begin{align}
	\Delta_{X\ot {^*X}} = \big[
	&X\ot {^*X}
	\xrightarrow{\id \otimes (\coev_X \otimes\id)}
	 X \otimes (({^*X}\ot X) \otimes {^*X}))
	\nonumber \\ &
	\xrightarrow{\id \otimes \alpha_{{^*X},X,{^*X}}^{-1}}
	 X \otimes ({^*X} \otimes (X \otimes {^*X}))
	\xrightarrow{\alpha_{X,{^*X},X\otimes {^*X}}}
	 (X \otimes {^*X}) \otimes (X \otimes {^*X})
	\big] \ .
	\label{eq:Delta_X*X}
\end{align}

Let us apply the above discussion to our situation, where $\C=\SF(\h)$ and $A=H$, for the choice $X=T$. The rigid structure on $\SF(\h)$ is discussed in \cite[Sect.\,3.6]{Davydov:2012xg}. We have ${^*T}=T$, so that for $f : H\ast T\to H\ast T$ we get a map $u(f) : T \ast T = S(\h) \to H$. Evaluating on $1 \in S(\h)$ gives an even element in $L_\mathrm{ev}$. Define $\tilde u(f) := u(f)(1) \in L_\mathrm{ev}$. 

\ble\lb{mfm}
The map $\tilde u : {_H\SF}(H\ast T, H\ast T) \to L_\mathrm{ev}$ is an isomorphism of algebras.
\ele 

\begin{proof}
The map $\SF_0(S(\h),L) \to L_\mathrm{ev}$, $f \mapsto f(1)$ is an isomorphism, and so $\tilde u$ is a composition of two isomorphism. 

To see that $\tilde u(f \circ g) = \tilde u(f) \Lmul \tilde u(g)$, we compute $\Delta_{{^*T} * T}$ from \eqref{eq:Delta_X*X}. We will need $\coev_T = \Lambda \in S(\h)$, where $\Lambda$ is the integral for $S(\h)$ defined in \cite[Lem.\,3.10]{Davydov:2012xg}. 
A short calculation using the associator in \eqref{eq:SF-assoc} shows that $\Delta_{{^*T}*T} : S(\h) \to S(\h) \otimes S(\h)$ is given by $a \mapsto \Delta(a) \, (\phi^{-1}(\Lambda)\otimes 1)$. 

The integral $\Lambda$ satisfies $\eps( \phi^{-1}(\Lambda))=1$, see \cite[Eqn.\,(3.61)]{Davydov:2012xg}. Since $S^0(\h) = \Cb \,1$, in our case this means $\phi^{-1}(\Lambda) = 1 \in S(\h)$. Altogether, $\Delta_{{^*T}*T}(1) = 1 \otimes 1$, proving that $\tilde u$ is an algebra map.
\end{proof}

\begin{proof}[Proof of Theorem \ref{smsf}]
Since $L_\mathrm{ev}$ has no idempotents other than $1_H$, Lemma \ref{mfm} shows that $H\ast T$ is indecomposable. Furthermore, $H\ast T$ is projective in ${_H\SF(\h)}$ (since $T$ is projective in $\SF(\h)$). An indecomposable projective module has simple cosocle. 
From
\be 
{_H\SF}(H\ast T, 
H)\ \simeq\ \SF(T, H)\ \simeq\ \SF_1(T, T) =  \Cb \id_T
\ee
it follows that the cosocle of $H\ast T$ must be $H$.
Similarly one shows that the cosocle of $H\ast \Pi T$ is $\Pi H$.
Finally,
\be
{_H\SF}(\Pi H, H)\ \simeq\ \SF(\Pi\one, H)\ \simeq\ \SF_0(\Pi\one, L) = 0\ ,
\ee
so that the $H$-modules $H$ and $\Pi H$ are not isomorphic.
\end{proof}

\bre
Since $\Pi H *_H \Pi H \cong H$,
the simple $H$-modules $H$ and $\Pi H$ form the group $\Zb/2\Zb$ with respect to the tensor product over $H$. Thus ${_H\SF(\h)}$ is an example of a 
pointed tensor category. 
In addition, one can verify that ${_H\SF(\h)}$ is not semi-simple.
\ere

We finish this section by showing that ${_H\SF(\h)}$ is finite tensor category.
We need the following auxiliary result:

\ble\lb{fcm}
Let $\C$ be a finite tensor category.
Let $A\in\C$ be an algebra such that any simple (left) $A$-module has a form $A\ot X$ for some (simple) $X\in\C$.
Then the category ${_A\C}$ is finite.
\ele
\bpf
Let $P\in\C$ be the projective cover of $X$. Then $A\ot P$ covers $A\ot X$ and is projective in ${_A\C}$.
Indeed the functor ${_A\C}(A\ot P,-) = \C(P,-)$ is exact. 
It is straightforward to see that an $A$-submodule which is an indecomposable direct summand of $A\ot P$, and which covers $A\ot X$, is the projective cover of $A\ot X$. 
\epf

\bco\lb{ftch}
The category ${_H\SF(\h)}$ is finite tensor.
\eco
\bpf
By Theorem \ref{smsf} simple $H$-modules are free. Lemma \ref{fcm} takes care of the rest.
\epf

\subsection{Simple local modules}\label{sec:simple-local-mod}

A commutative algebra $A$ in a braided tensor category is {\em Lagrangian} if any local $A$-module is  a direct sum of the regular $A$-module $A$.
Since submodules of a local module are local, Lagrangian algebras are in particular simple as modules over themselves. 

In the case of braided fusion categories, algebras with the above properties where introduced in \cite{Frohlich:2003hm} under the name ``trivialising algebras'' (see also \cite{dmno}, where the term ``Lagrangian" was introduced).

\bre
Recall that the left dual ${^*A}$ of an algebra is a left $A$-module with the module structure $A\ot {^*A}\to {^*A}$ given by
\be 
A\ot {^*A}\xrightarrow{coev \ot \id} {^*A}\ot A\ot A\ot {^*A} \xrightarrow{\id\ot \mu\ot \id} {^*A}\ot A\ot {^*A}\xrightarrow{\id\ot ev}  {^*A} \ .
\ee
It is straightforward to see that for commutative $A$ this module is local.
If $A$ is a Lagrangian algebra in $\C$ then the $A$-module ${^*A}$ is isomorphic to a direct sum of copies of $A$. 
But $({}^*A)^* \cong A$ as right $A$-modules, and since $A$ is simple as a left and right module over itself, ${}^*A$ must be isomorphic to a single copy of $A$. Altogether we have shown: a Lagrangian algebra in a braided tensor category is Frobenius. 
\ere

A commutative algebra $A$ in a ribbon tensor 
category is {\em Lagrangian} if it is Lagrangian as an algebra in a a braided category and if the ribbon twist is trivial on $A$, i.e. $\theta_A=1$.

For a local module $M$ in a ribbon category, the ribbon twist on $M$ is a module morphism. Together with $c_{V,U} \circ c_{V,U} = \theta_{U \otimes V} \circ (\theta_U^{-1} \otimes \theta_V^{-1})$, this observation implies the following lemma:

\ble\label{lem:twist-mult-id}
Let $A$ be a commutative algebra 
	with trivial twist
in a 
ribbon tensor category $\C$ 
	over an algebraically closed field.
Then a simple $A$-module $M$ is local if and only if the ribbon twist $\theta_M$ is a scalar multiple of the identity morphism $\id_M$.
\ele

From \eqref{eq:SF-twist} we see that $\theta_{\Pi H}$ is not a scalar multiple of the identity morphism: $\theta_{\Pi L} = \id_{\Pi L}$ while $\theta_{\Pi T} = -\id_{\Pi T}$.
Thus $H$ is the unique simple local $H$-module.
The main result of this section is:

\bth\lb{la}
The category of local $H$-modules ${^\mathrm{loc}_H\SF(\h)}$ is (canonically ribbon equivalent to) the category $\Vect$ of vector spaces.
In other words, the algebra $H$ is Lagrangian.
\eth

\bpf
We have seen that ${^\mathrm{loc}_H\SF(\h)}$ has only one simple object, the monoidal unit.
Since by Corollary \ref{ftch}, ${^\mathrm{loc}_H\SF(\h)}$ 
is a finite tensor category over a field of characteristic zero, by \cite[Thm.\,2.17]{eo} there are no non-trivial extensions of the unit by itself. Thus all objects of ${^\mathrm{loc}_H\SF(\h)}$ are direct sums of 
the monoidal unit. In other words ${^\mathrm{loc}_H\SF(\h)}$ is the category $\Vect$ of vector spaces.
\epf

The following theorem collects our results on Lagrangian algebras in $\SF(\h)$:

\begin{theorem}\label{thm:Lag-in-SF}
\begin{enumerate}
\item There exist Lagrangian algebras in $\SF(\h)$  iff $\beta=1$. 
\item For $\beta = 1$ and a Lagrangian subspace $\f\subset\h$, $H(\f)$ is a Lagrangian algebra.
\item A Lagrangian algebra  
$F \in \SF(\h)$ 
is isomorphic as an algebra to $H(\f)$ for some Lagrangian subspace $\f\subset\h$.
\item $H(\f) \cong H(\f')$ as algebras iff $\f = \f'$.
\item For $g \in Sp(\h)$ such that $g(\f) = \f'$ we have $G_{g^{-1}}(H(\f))\cong H(\f')$ as algebras, where $G_{g^{-1}}$ is the ribbon auto-equivalence from Section \ref{sec:brmon-on-rep}.
\end{enumerate}
\end{theorem}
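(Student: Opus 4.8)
The plan is to treat the theorem as a synthesis of what has already been proved in Sections~\ref{sec:commalg}--\ref{sec:locmod}: most of it is assembly, and the only genuinely new ingredient is the converse needed for Parts~1 and~3, namely that \emph{every} Lagrangian algebra satisfies the wish list W1--W5. So I would first dispose of the constructive statements (Parts~2,~4,~5 and the ``$\Leftarrow$'' half of Part~1) and then reduce Part~3 to checking W1--W5, where condition W1 is the crux.

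For Part~2 I would note that, for $\beta=1$ and a Lagrangian $\f$, the existence half of Theorem~\ref{thm:H-unique} produces $H(\f)$ and in particular exhibits it as commutative with trivial twist, while Theorem~\ref{la} shows precisely that its only local modules are direct sums of the regular module; hence $H(\f)$ is Lagrangian. Since every symplectic space over $\Cb$ admits a Lagrangian subspace, this already gives the ``$\Leftarrow$'' implication of Part~1. Part~4 is exactly Lemma~\ref{lem:unique-from-f}. For Part~5 I would use that $G_{g^{-1}}$ is a ribbon autoequivalence of $\SF(\h)$ and that Lagrangianness is phrased entirely in terms of the ribbon and module structure, so $G_{g^{-1}}$ carries the Lagrangian algebra $H(\f)$ to a Lagrangian algebra; by Part~3 this image is isomorphic to $H(\f'')$ for some Lagrangian $\f''$, and unwinding the explicit action of $G_{g^{-1}}$ on $\SF_0(\h)$ (pullback of the $S(\h)$-action) together with the definition $\f''=ann_\h\big((G_{g^{-1}}H(\f))_0\big)$ identifies $\f''$ with $\f'$. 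Part~4 then yields $G_{g^{-1}}(H(\f))\cong H(\f')$.

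It remains to prove Part~3, which simultaneously delivers the ``$\Rightarrow$'' half of Part~1: a Lagrangian algebra is some $H(\f)$, and by Theorem~\ref{thm:H-unique} this forces $\beta=1$. Let $F$ be a Lagrangian algebra. Conditions W2 and W3 hold by the very definition of a Lagrangian algebra in a ribbon category; W5 holds because a Lagrangian algebra is simple over itself (submodules of local modules are local); and W4 holds because a Lagrangian algebra is Frobenius, as recorded in the remark preceding Theorem~\ref{la}. Once W1 is in hand, Theorem~\ref{thm:H-unique} gives $\beta=1$ and $F\cong H(\f)$ with $\f=ann_\h(F_0)$, completing Part~3.

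The main obstacle is therefore condition W1, that the twisted component $F_1$ be non-zero. I would argue by contradiction: if $F_1=0$ then $F$ lies entirely in the even subcategory $\SF_0(\h)$, so any direct sum of copies of the regular module $F$ again lies in $\SF_0(\h)$; to contradict Lagrangianness it then suffices to exhibit a single non-zero \emph{local} $F$-module lying in the twisted sector $\SF_1(\h)$. The natural candidates are the free modules $F\ast T$ and $F\ast\Pi T$, which are non-zero objects of $\SF_1(\h)$, and their simple subquotients. By Lemma~\ref{lem:twist-mult-id} such a simple module is local as soon as its twist is a scalar, and on $\SF_1(\h)$ the twist is $\beta^{-1}\omega$, hence scalar on any object that is purely even or purely odd as a super-vector space. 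The delicate step -- which I expect to be where non-degeneracy (factorisability) of the braiding on $\SF(\h)$ enters -- is to rule out the possibility that \emph{every} simple $F$-module in $\SF_1(\h)$ has mixed parity (and hence non-scalar twist, so that it fails to be local), since only then is one guaranteed a genuinely twisted simple local module and thus the contradiction. Everything else is bookkeeping with the classification already proved.
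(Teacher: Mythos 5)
Your assembly of Parts 2, 4, 5 and the ``$\Leftarrow$'' half of Part 1 from Theorems \ref{thm:H-unique} and \ref{la} and Lemma \ref{lem:unique-from-f} is correct and matches the paper, as is your reduction of Part 3 to verifying W1--W5 for a Lagrangian algebra $F$: W2 and W3 are definitional, W5 follows because submodules of local modules are local, and W4 follows from the Frobenius property recorded in the remark in Section \ref{sec:simple-local-mod}. The genuine gap is exactly where you flag it: condition W1. Your strategy is to produce a simple local $F$-module in the twisted sector $\SF_1(\h)$, which requires showing that some simple $F$-module in $\SF_1(\h)$ is purely even or purely odd as a super-vector space (so that its twist $\beta^{-1}\omega$ is scalar and Lemma \ref{lem:twist-mult-id} applies). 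You do not prove this, and ruling out that all such simples have mixed parity is not bookkeeping; nor do you indicate how non-degeneracy of the braiding would actually be brought to bear. As it stands, the key implication of Parts 1 and 3 is unproved.

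The paper avoids the twisted sector altogether. If $F_1=\{0\}$, then $F\in\SF_0(\h)$, and one considers the parity-shifted regular module $\Pi F$. It is local: triviality of the twist forces $\hat C$ to act as zero on $F$ (Lemma \ref{lem:twist=id}), and the monodromy in sector $00$ is governed by the action of the copairing, which dies upon composition with the $F$-action. Moreover $\Pi F$ is not isomorphic to a direct sum of copies of $F$ in the category of local modules, since its self-braiding over $F$ is $-\id_{\Pi F\otimes_F\Pi F}$ rather than $+\id$. This contradicts $F$ being Lagrangian, so $F_1\neq\{0\}$, and Parts 1--3 follow from Theorems \ref{thm:H-unique} and \ref{la}. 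If you wish to keep your own route you must still supply the missing parity argument; the $\Pi F$ argument is shorter and entirely self-contained.
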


\begin{proof}[Proof of Theorem \ref{thm:Lag-in-SF}]
1.--3.\ Let $F$ be a Lagrangian algebra in $\SF(\h)$. Write $F = F_0 \oplus F_1$ with $F_i \in \SF_i(\h)$. 
Suppose $F_1=\{0\}$. Then the $F$-modules $F$ and $\Pi F$ are local 
and non-isomorphic
	(since the self-braiding of $\Pi F$ in the category of local modules is $-\id_{\Pi F \otimes_F \Pi F}$), in contradiction to $F$ being Lagrangian.
Thus $F_1 \neq \{0\}$ and parts 1--3 follow from Theorems \ref{thm:H-unique} and \ref{la}. 

\smallskip\noindent
4.\ This is shown in Section \ref{sec:ex-unique}.

\smallskip\noindent
5.\ Abbreviate $J := G_{g^{-1}}$. Since $J$ is a ribbon functor, $J(H(\f))$ is again an algebra which satisfies W1--W5. 
By construction, the annihilator of $J(H(\f))_0$ in $\h$ is $\f'$. Thus, by Lemma \ref{lem:unique-from-f} we have $J(H(\f)) \cong H(\f')$ as algebras. 
\end{proof}

\bre
Here we outline an alternative proof of Theorem \ref{la}.
The Frobenius-Perron dimensions (with respect to the Grothendieck ring)
of the simple objects in $\SF$ are
$$d(\one) = d(\Pi\one) = 1,\qquad\qquad d(T) = d(\Pi T) = 2^\frac{d}{2}\ .$$
The dimension of the algebra $H$ is
$d(H) = d(L) + d(T) = 2^\frac{d}{2} + 2^\frac{d}{2} = 2^\frac{d+2}{2}$.
The dimension of the category $\SF_0$ is
$$D(\SF_0) = D(\sVect)d(S(\h)) = 2^{d+1}\ .$$
Here we define the dimension of a finite tensor category as the sum $\sum_S d(S)d(P(S))$ over (isomorphism classes of) simple objects of Frobenius-Perron dimensions multiplied by Frobenius-Perron dimensions of projective covers.
The dimension of the category $\SF$ is
$D(\SF) = D(\SF_0) + d(T)^2 + d(\Pi T)^2 = 2^{d+2}$.
Note that $D(\SF) = d(H)^2$. This together with the formula
\be\lb{dlm}
D({^\mathrm{loc}_A\C}) = \frac{D(\C)}{d(A)^2}
\ee
for a (Frobenius, simple) commutative algebra $A$ 
in a finite non-degenerate braided
tensor category $\C$ implies that $D({^\mathrm{loc}_H\SF(\h)})=1$ and hence ${^\mathrm{loc}_H\SF(\h)}=\Vect$.

We could not find a proof of \eqref{dlm} for finite
tensor categories in the literature 
(see \cite{Kirillov:2001ti} for the case of modular categories), 
and we will return to it in a separate publication.
\ere

\section{Classification of holomorphic extensions}

In this section we will combine the previous results to show that -- under two assumptions -- all holomorphic extensions of the even part $\VOA(\h)_\mathrm{ev}$ of the symplectic fermion VOSA are isomorphic to the lattice VOA in Corollary \ref{cor:symp-even-in-lattice}. To formulate the assumptions, we first need to review and extend some aspects of vertex operators for free super-bosons.

\subsection{Vertex operators for symplectic fermions}\label{sec:vertex-SF}

We start by reviewing the relevant aspects of vertex operators for free super-bosons as treated in \cite[Sect.\,3.1]{Runkel:2012cf}. For an object $C \in \SF(\h)$ we denote by $\overline{\widehat C}$ the algebraic completion of $\widehat C$ with respect to the $\Zb$-grading given by the total mode number, see \cite[Sect.\,2.2\,\&\,2.4]{Runkel:2012cf}.

\begin{definition} \label{def:vertex-op} \cite[Def.\,3.1]{Runkel:2012cf}~~For $r,s \in \Zb_2$ let $A \in \SF_r(\h)$,  $B \in \SF_s(\h)$ and $C \in \SF_{r+s}(\h)$. 
A {\em vertex operator from} $A,B$ {\em to} $C$ is a map
\be
	V ~:~ \Rb_{>0} \times (A \otimes \widehat B) \longrightarrow \overline{\widehat C}
\ee
such that 
\begin{enumerate}
\item[(i)] for all $x \in \Rb_{>0}$, $V(x)$ is an even linear map; 
for all $a\in A$, $\hat b \in \widehat B$, $\hat\gamma \in \widehat C'$ (the restricted dual), the function $x \mapsto \langle \hat \gamma , V(x)  (a \otimes \hat b) \rangle$ from $\Rb_{>0}$ to $\Cb$ is smooth;
\item[(ii)] $L_{-1} \circ V(x) - V(x)\circ (\id_A \otimes L_{-1}) = \tfrac{d}{dx} V(x)$, where $\tfrac{d}{dx} V(x)$ is defined in terms of matrix elements as in (i) and $L_{-1}$ was defined in \eqref{eq:virasoro-action};
\item[(iii)] for all $a \in \h$,
\begin{itemize}
\item $r=0$: For all $m \in \Zb$ (for $s=0$) or $m \in \Zb {+} \frac12$ (for $s=1$)
\be\label{eq:mode-past-untwisted}
  a_m \, V(x) = V(x)  \big( x^m  a \otimes \id + \id \otimes a_m \big)
\ee
\item $r=1$: For all $m \in \Zb$  (for $s=0$) or $m \in \Zb {+} \frac12$  (for $s=1$)
\be\label{eq:mode-past-twisted}
  \sum_{k=0}^\infty {{\frac 12} \choose k} (-x)^{k}  a_{m +\frac12- k}
     V(x) 
    = i 
 \sum_{k=0}^\infty {{\frac 12} \choose k} (-1)^k x^{\frac12-k}  
       V(x) \big( \id \otimes a_{m + k} \big) \ .
\ee
\end{itemize}
\end{enumerate}
The vector space of all vertex operators from $A,B$ to $C$ will be denoted by $\Vc_{A,B}^C$.
\end{definition}

Note that the vertex operators defined above are not the same as intertwining operators for representations of a VOA (see \cite{Milas:2001,Huang:2010} for the definition of logarithmic intertwining operators): they are not defined in terms of a formal variable and its formal logarithm;  they only satisfy a subset of the conditions of intertwining operators; they are defined only on $A \otimes \widehat B$ instead of $\widehat A \otimes \widehat B$. 
The relation between vertex operators and intertwining operators is part of Conjecture \ref{conj:braided-equiv} below.

\medskip

By definition of the tensor product $*$ of $\SF(\h)$ we have natural
isomorphisms \cite[Thm.\,3.13]{Runkel:2012cf}
\be\label{eq:SF-tensor-via-rep-obj}
	\SF(A*B,C)
	\longrightarrow \Vc_{A,B}^C
	\quad , \quad
	f \longmapsto V(f,-) \ .
\ee
The vertex operator $V(f,x)$ is uniquely defined by its value on ground states. 
Namely, write $P_\mathrm{gs} : \widehat A \to A$ for the projector to ground states. 
Let $\Omega$ be the copairing dual to $(-,-)$, that is, in terms of two bases $\{ \alpha^i \}$ and $\{ \beta^i \}$ such that $(\alpha^i,\beta^j)=\delta_{i,j}$ we have $\Omega = \sum_{i=1}^d \beta^i \otimes \alpha^i \in S(\h) \otimes S(\h)$. Furthermore, set $\hat\Omega = \sum_{i=1}^d \beta^i \alpha^i \in S(\h)$.
Then, depending on the sector $A$ and $B$ are taken from, we have \cite[Eqn.\,(3.62)]{Runkel:2012cf}
\be\label{eq:Pgs-V}
P_\mathrm{gs} \circ V(f;x)\raisebox{-2pt}{$\big|_{A \otimes B}$} ~=~  
\left\{\rule{0pt}{3.6em}\right.
\hspace{-.5em}\raisebox{.7em}{
\begin{tabular}{ccl}
  $A$ & $B$ & 
\\
\rm 0 & 0 & 
$f \circ \exp\!\big(\ln(4x)\, \Omega \big)$ 
\\[.5em]
 0 & 1  & 
$f \circ \exp\!\big(- \tfrac{1}{2} \ln(16x)  \, (\hat \Omega \otimes \id) \big)$ 
\\[.5em]
1 & 0  & 
$f \circ \exp\!\big(- \tfrac{1}{2} \ln(16x) \,  (\id \otimes \hat \Omega)\big)$ 
\\[.5em]
1 & 1  &
$\exp\!\big(\tfrac12\ln(x)\,(\tfrac{d}4 + \hat \Omega)\big) \circ f \circ (1 \otimes \id_{A \otimes B})$ 
\end{tabular}}
\ee
Here, the ``$1$'' in the expression in sector 11 stands for the unit of $S(\h)$.

\medskip

Below we will extend the vertex operators to maps $\Rb_{>0} \times (\widehat A \otimes \widehat B) \to \overline{\widehat C}$, at least in the case $A \in \SF_0(\h)$. The extension rules
(as well as the defining rules \eqref{eq:mode-past-untwisted} and \eqref{eq:mode-past-twisted})
are obtained via the usual contour deformation arguments, see 
\cite{Gaberdiel:1993mt,Gaberdiel:1996kf}
and e.g.\ \cite[App.\,B.3]{Runkel:2012cf}, which also states our conventions for square root branch cuts. 

For the purpose of the following proposition we denote the extension by $\widehat V$, but afterwards we will just write $V$.
The proof of the proposition is a little lengthy and has been moved to Appendix \ref{app:V-on-descendents}.

\begin{proposition}\label{prop:V-on-descendents}
Let $A \in \SF_0(\h)$, $B,C \in \SF_s(\h)$ for $s \in \Zb_2$ . For each $V \in \mathcal{V}_{AB}^C$ as in Definition \ref{def:vertex-op} there exists a unique
\be
	\widehat V ~:~  \Rb_{>0} \times (\widehat A \otimes \widehat B) \longrightarrow \overline{\widehat C}
\ee
such that $\widehat V(x)(a \otimes \hat b) = V(x)(a \otimes \hat b)$ for all $a \in A$ and $\hat b \in \widehat B$ and for all $m \in \Zb$,
\begin{itemize}
\item for $s=0$:
\begin{align}
&V(x)(a_m \otimes \id)
\nonumber\\
&=
\sum_{k=0}^\infty {m \choose k} (-x)^k \, a_{m-k} V(x)
\,-\,
\sum_{k=0}^\infty {m \choose k} e^{\pi i(m-k)} x^{m-k} \, V(x)(\id \otimes a_k)
\label{eq:mode-at-x_untw0}
\end{align}
\item for $s=1$:
\begin{align}
&\sum_{k=0}^\infty {\tfrac12 \choose k} x^{\frac12 -k} \,
V(x)(a_{m+k} \otimes \id)
\nonumber\\
&=
\sum_{k=0}^\infty {m \choose k} (-x)^k \, a_{m+\frac12-k} V(x)
\,-\,
\sum_{k=0}^\infty {m \choose k} e^{\pi i(m-k)} x^{m-k} \, V(x)(\id \otimes a_{k+\frac12})
\label{eq:mode-at-x_tw0}
\end{align}
\end{itemize}
This $\widehat V$ satisfies (i) in Definition \ref{def:vertex-op}  (for all $\hat a \in \widehat A$).
\end{proposition}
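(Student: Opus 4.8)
The plan is to construct $\widehat V$ by induction on the ``creation depth'' of its first argument and then verify that the resulting map has the stated properties. Since $A \in \SF_0(\h)$, the module $\widehat A$ is the untwisted induced module of \eqref{eq:induced-mod-def}, so by the PBW theorem it is spanned by vectors $\alpha^{i_1}_{-m_1}\cdots\alpha^{i_\ell}_{-m_\ell}\,a$ with $m_1,\dots,m_\ell>0$ and $a$ ranging over a fixed basis of the ground states $A$; I would filter $\widehat A$ by the number $\ell$ of creation modes, with $\ell=0$ the ground states. On the lowest step $\widehat V$ is prescribed to equal $V$. A depth-$\ell$ vector has the form $a_j w$ with $w$ of depth $\ell-1$ and $a\in\h$: if $j>0$ the mode is an annihilation operator and $a_j w$ already has depth $<\ell$, so nothing new is defined; the relations are needed only for $j\le 0$. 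In the untwisted case ($s=0$), relation \eqref{eq:mode-at-x_untw0} with mode index $j$ expresses $\widehat V(x)(a_j w\otimes\hat b)$ directly through $\widehat V(x)$ on the depth-$(\ell-1)$ data $\widehat V(x)(w\otimes\hat b)$ and $\widehat V(x)(w\otimes a_k\hat b)$, so depth drops by one. In the twisted case ($s=1$) I would first invert the triangular system on the left of \eqref{eq:mode-at-x_tw0}: its leading ($k=0$) coefficient is $\binom{1/2}{0}x^{1/2}=x^{1/2}\neq0$ for $x>0$, so one solves for a single $\widehat V(x)(a_j\otimes\id)$ in terms of the right-hand side (which lives in depth $\ell-1$) and of $\widehat V(x)(a_{j+k}\otimes\id)$, $k\ge1$; this inversion is completed by a secondary downward induction on $j$, which terminates because $\widehat A$ is bounded below, so $a_{j+k}w=0$ once $j+k$ exceeds the bound of $w$. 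In either sector the prescribed restriction together with the relations then force every value of $\widehat V$, which is exactly the uniqueness claim.

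For existence I would take the recursion just described, organised along the fixed PBW order, as the definition, and check that it lands in $\overline{\widehat C}$. The index $j\le 0$ is non-positive, so $\binom{j}{k}\neq0$ for all $k\ge0$ and the two sums in \eqref{eq:mode-at-x_untw0} (and likewise in \eqref{eq:mode-at-x_tw0}) are a priori infinite. On a fixed matrix element $\langle\hat\gamma,\widehat V(x)(\,\cdot\,\otimes\hat b)\rangle$ with $\hat\gamma\in\widehat C'$ and $\hat b\in\widehat B$, however, both sums are locally finite: in the first sum the mode $a_{j-k}$ is a creation operator on $\widehat C$ that raises the total mode number, hence pairs non-trivially with the restricted-dual vector $\hat\gamma$ only for finitely many $k$; in the second sum $a_k$ with $k>0$ lowers the mode number of $\hat b$ and annihilates it once $k$ exceeds the bound-below constant of $\hat b$. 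Thus every matrix element of $\widehat V$ is a finite $\Cb$-linear combination of terms (power of $x$) times a matrix element of $V$.

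The hard part will be the consistency check: verifying that the map defined by the PBW recursion actually satisfies \eqref{eq:mode-at-x_untw0} and \eqref{eq:mode-at-x_tw0} for an arbitrary mode $a_n$ applied to an arbitrary first-slot vector, not merely for the leading mode used in the recursion. Equivalently, one must show that the value of $\widehat V$ is independent of the chosen PBW ordering. This reduces to compatibility of the mode-moving rules with the super-commutation relations \eqref{eq:SF-liebracket} of $\widehat\h$: moving $a_n$ and then $b_{n'}$ across $V$ in the two possible orders must differ exactly by the contribution of $[a_n,b_{n'}]=(a,b)\,n\,\delta_{n+n',0}$. Since the modes on the left act on $\overline{\widehat C}$ and those on the right on $\widehat B$, this amounts to a binomial-and-phase identity relating the coefficients $\binom{j}{k}$, $e^{\pi i(j-k)}$ and the powers of $x$ appearing in the rules, which is precisely what the contour-deformation derivation of the defining relations \eqref{eq:mode-past-untwisted} and \eqref{eq:mode-past-twisted} guarantees. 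I would verify it by a direct generating-function computation, treating the $\widehat C$- and $\widehat B$-contributions separately and matching the $\delta$-term; I expect the bookkeeping of signs (all modes are odd) and of the square-root branch in the twisted sector to be the most error-prone aspect.

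Finally, condition (i) of Definition \ref{def:vertex-op} is inherited. Evenness of each $\widehat V(x)$ is manifest, since every mode $a_m$ is odd and each rule pairs an odd mode on the first slot with a single odd mode on either $\widehat C$ or $\widehat B$, preserving total parity. Smoothness of $x\mapsto\langle\hat\gamma,\widehat V(x)(\hat a\otimes\hat b)\rangle$ follows from the local finiteness established above: each such matrix element is a finite combination of smooth functions $x\mapsto x^{\,p}\,\langle\hat\gamma',V(x)(a'\otimes\hat b')\rangle$, and finite sums of smooth functions on $\Rb_{>0}$ are smooth.
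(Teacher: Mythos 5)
Your overall architecture matches the paper's proof: define $\widehat V$ on a PBW-type basis of $\widehat A$ by repeated application of the mode-transport rules, check local finiteness on matrix elements so the result lands in $\overline{\widehat C}$, anchor the recursion at ground states using property (iii) of Definition \ref{def:vertex-op}, and reduce well-definedness to compatibility of the transport rules with the commutation relations. Uniqueness and smoothness are handled the same way in both arguments.

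The gap is that the decisive computation is only asserted, and the assertion is stated incorrectly in the twisted sector. You claim that transporting $a_n$ and then $b_{n'}$ in the two orders must differ by the contribution of $[a_n,b_{n'}]=(a,b)\,n\,\delta_{n+n',0}$ from \eqref{eq:SF-liebracket}, and that this ``is precisely what the contour-deformation derivation guarantees.'' The contour deformation is the heuristic that motivates \eqref{eq:mode-at-x_untw0} and \eqref{eq:mode-at-x_tw0}; it does not by itself prove the closure of the transport operators, which is exactly the content of the paper's Lemma \ref{lem:extension-aux1} and requires explicit residue computations with the binomial series. More importantly, for $s=1$ the natural objects are the combinations $\xi^{\frac12}(a_m)=\sum_k\binom{1/2}{k}x^{1/2-k}a_{m+k}$ appearing on the left of \eqref{eq:mode-at-x_tw0} (your ``triangular system''), together with the corresponding operators $\lambda^{\frac12},\rho^{\frac12}$ on $\overline{\widehat C}$ and $\widehat B$; these do \emph{not} represent $\widehat\h$ but an auxiliary Lie super-algebra with the $x$-dependent bracket $[a_m,b_n]=(a,b)\bigl(m\,x\,\delta_{m+n,0}+(m+\tfrac12)\delta_{m+n+1,0}\bigr)K$, i.e.\ with an off-diagonal term your sketch does not anticipate. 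If you instead insist on inverting the triangular system and matching against the plain $\widehat\h$ bracket on raw modes, the identity you need is the conjugate of the above by $\xi^{\frac12}$ and is correspondingly messier; either way, without carrying out the residue computation (the analogue of \eqref{eq:extend-aux3}) the consistency of the recursion --- and hence existence of $\widehat V$ --- is not established. A secondary, minor omission: the base case of the consistency check, where a positive mode hits a ground state of $A$, must be verified against condition (iii) of Definition \ref{def:vertex-op} for the original $V$ (the paper's Step 2); you should say this explicitly rather than fold it into ``arbitrary mode on arbitrary vector.''
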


We remark that for $\h$ purely even (rather than purely odd as treated here), twisted and untwisted vertex operators -- in the case that the module inserted at $x$ is untwisted -- are defined in \cite{Frenkel:1987}, see e.g.\ (3.2.7), (4.1.8) there, as well as Chapters 8, 9. 

\medskip

From now on we will refer to $\widehat V$ just as $V$.

\subsection{Vertex operators and symplectic maps}

An element $g \in Sp(\h)$ induces automorphisms $\hat g$ of the Lie super-algebras $\widehat\h$ and $\widehat\h_{\mathrm{tw}}$. Via pullback by $\hat g^{-1}$ we obtain an $Sp(\h)$ action on 
\be\label{eq:reps-untw-and-tw}
	\Rep^\mathrm{fd}_{\flat,1}(\widehat\h)
	~~\oplus~~
	\Rep^\mathrm{fd}_{\flat,1}(\widehat\h_\mathrm{tw})
\ee
by auto-equivalences. Denote this functor by $\widehat G_g$.

Recall the ribbon auto-equivalence $G_g$ defined in the end of Section \ref{sec:brmon-on-rep}. The functors $\widehat{(-)} \circ G_g$ and $\widehat G_g \circ \widehat{(-)}$ from $\SF(\h)$ to \eqref{eq:reps-untw-and-tw} are naturally equivalent via
\be\label{eq:nat-xfer-hatG-Ghat}
\varphi_{g,A} \,:=\,
U(\hat g) \otimes \id ~:~ \widehat G_g(\widehat A) \longrightarrow \widehat{G_g(A)} \ ,
\ee
for all $A \in \SF(\h)$, where we make use of the form \eqref{eq:induced-mod-def} of the induced module $\widehat A$. The automorphism $U(\hat g)$ ensures that this is a map of $\widehat\h_{(\mathrm{tw})}$-modules 
and that it is compatible with the tensor product over $\widehat\h_{(\mathrm{tw}),\ge0} \oplus \Cb K$ used in \eqref{eq:induced-mod-def}.

The isomorphisms \eqref{eq:SF-tensor-via-rep-obj} now fit into a commuting square:

\begin{lemma}\label{lem:Gg-functor-on-vertex-op}
For all $A,B,C \in \SF(\h)$ we have the commuting square
\be
\xymatrix{
\SF(A * B,C) \ar[rr] \ar[d]_{(1)} && \Vc_{A,B}^C \ar[d]^{(2)}
\\ 
\SF(G_g(A) * G_g(B),G_g(C)) \ar[rr] && \Vc_{G_g(A),G_g(B)}^{G_g(C)}
}
\ee
where (1) is the map $f \mapsto G_g(f) \circ (G_g)_{A,B}^{-1}$ and (2) is the map $V(x) \mapsto \varphi_{g,C} \circ V(x) \circ (\varphi_{g,A}^{-1}\otimes \varphi_{g,B}^{-1})$.
\end{lemma}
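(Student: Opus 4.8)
The plan is to reduce commutativity of the square to one structural fact—that a symplectic $g$ fixes the copairing dual to $(-,-)$—and then to verify everything on ground states using the explicit formula \eqref{eq:Pgs-V}.

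First I would record that map (2) is well defined, i.e.\ that $\varphi_{g,C}\circ V(x)\circ(\varphi_{g,A}^{-1}\ot\varphi_{g,B}^{-1})$ again satisfies Definition \ref{def:vertex-op}. This is immediate from \eqref{eq:nat-xfer-hatG-Ghat}: the maps $\varphi_{g,A},\varphi_{g,B},\varphi_{g,C}$ are isomorphisms of $\widehat\h_{(\mathrm{tw})}$-modules, so conjugating by them transports the mode relations (iii) and the $L_{-1}$-relation (ii) from the triple $A,B,C$ to the triple $G_g(A),G_g(B),G_g(C)$, while smoothness (i) is preserved because the $\varphi$ are grade-preserving linear isomorphisms. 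Both routes around the square thus land in $\Vc_{G_g(A),G_g(B)}^{G_g(C)}$.

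By the bijection \eqref{eq:SF-tensor-via-rep-obj} together with the fact that a vertex operator is determined by the values of $P_\mathrm{gs}\circ V$ on ground states (read off from \eqref{eq:Pgs-V}, whose exponential prefactors are invertible), it suffices to compare the two routes after applying $P_\mathrm{gs}$ and restricting the arguments to the ground states $G_g(A)\ot G_g(B)$. Here I would use two elementary properties of $\varphi_{g,A}=U(\hat g)\ot\id$: it commutes with $P_\mathrm{gs}$ (as $U(\hat g)$ preserves the total mode number) and it restricts to the identity on ground states (as $\hat g(1)=1$). Hence the route through (2) collapses on ground states to $P_\mathrm{gs}\circ V(f,x)|_{A\ot B}$, which by \eqref{eq:Pgs-V} is the underlying linear map of $f$ composed with the sector-dependent exponential of $\Omega$ and $\hat\Omega$, evaluated in the \emph{original} $S(\h)$-module structures on $A,B,C$. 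For the route through (1) I would apply \eqref{eq:Pgs-V} to the morphism $G_g(f)\circ(G_g)_{A,B}^{-1}$, obtaining the same exponential expression but now evaluated in the \emph{$G_g$-twisted} module structures. Two observations identify the results: the underlying linear map of $G_g(f)$ equals that of $f$ (the pullback functor does not alter underlying maps), and $(G_g)_{A,B}$ is the identity outside sector $11$, while in sector $11$ it equals $S(g)\ot\id\ot\id$, so $(G_g)_{A,B}^{-1}$ acts on the inserted unit by $S(g)^{-1}(1)=1$ and drops out. Finally, since $g$ is symplectic it preserves $(-,-)$ and therefore fixes $\Omega=\sum_i\beta^i\ot\alpha^i$ and $\hat\Omega=\sum_i\beta^i\alpha^i$; equivalently $\Omega$ and $\hat\Omega$ act identically whether via the original or the $G_g$-twisted module structure, and the constant $\tfrac d4$ in sector $11$ is unchanged. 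The two exponentials therefore coincide and the square commutes.

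The main obstacle I anticipate is purely the bookkeeping: verifying the ground-state reduction uniformly across the four sectors $00,01,10,11$ while tracking parity signs and the sector-specific normalisations of \eqref{eq:Pgs-V} (in particular the $\tfrac d4+\hat\Omega$ term and the half-integer modes in the twisted sectors), and confirming in each sector both that $\varphi_{g,\bullet}$ restricts to the identity on ground states and that $(G_g)_{A,B}$ contributes nothing beyond the harmless $S(g)$ on the sector-$11$ unit. The conceptual content—symplectic invariance of $\Omega$ and $\hat\Omega$—is immediate; the labour lies in checking that no factor of $g$ is lost in the twisted sectors.
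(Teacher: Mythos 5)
Your proposal is correct and follows essentially the same route as the paper's proof: establish that conjugation by the $\varphi_{g,\bullet}$ yields an element of $\Vc_{G_g(A),G_g(B)}^{G_g(C)}$, reduce to ground states via \eqref{eq:Pgs-V} using that $\varphi_{g,D}$ is the identity there, and match the two sides. The only difference is that you spell out what the paper dismisses as ``immediate from the definitions,'' namely the $Sp(\h)$-invariance of $\Omega$ and $\hat\Omega$ and the triviality of $(G_g)_{A,B}^{-1}$ on the inserted unit in sector $11$ -- a worthwhile elaboration, but not a different argument.
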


In writing map (2) above, we used that the underlying $\Zb$-graded vector spaces of $\widehat G_g(\widehat D)$ and $\widehat D$ are the same for all $D \in \SF(\h)$.

\begin{proof}
Pick $f : A*B \to C$, let $V = V(f,-)$ and write $V'(x) = \varphi_{g,C} \circ V(x) \circ (\varphi_{g,A}^{-1}\otimes \varphi_{g,B}^{-1})$.

One first verifies that $V(x)$ is equally a vertex operator 
$G_g(A) \otimes \widehat G_g(\widehat B) \to \overline{\widehat G_g(\widehat C)}$, i.e.\ it satisfies the conditions in Definition \ref{def:vertex-op} for these modules with $g$-twisted $\widehat\h_{(\mathrm{tw})}$-action (and, if $A \in \SF_0(\h)$, the conditions of Proposition \ref{prop:V-on-descendents}). Since $\varphi_{g,D}$ are $\widehat\h_{(\mathrm{tw})}$-module isomorphisms for all $D \in \SF(\h)$ we get
\be
	V' \,\in\, \Vc_{G_g(A),G_g(B)}^{G_g(C)} \ .
\ee

Therefore, $V'$ is determined by its values on ground states via \eqref{eq:Pgs-V}. 
But on ground states $v \in D$ we have $\varphi_{g,D}(1 \otimes v) = 1 \otimes v$, and so it remains to check that 
\be
	P_{\mathrm{gs}} \circ V'(x)\raisebox{-2pt}{$\big|_{A \otimes B}$} = P_{\mathrm{gs}} \circ V\big(\,G_g(f) \circ (G_g)_{A,B}^{-1}\,,\,x\big)\raisebox{-2pt}{$\big|_{A \otimes B}$} \ ,
\ee
which is immediate from the definitions
	(both sides are equal to $P_{\mathrm{gs}} \circ V(x)|_{A \ot B}$).
\end{proof}

\subsection{Two assumptions}\label{sec:two-assumptions}

The VOA $\VOA(\h)_\mathrm{ev}$ is $C_2$-cofinite  \cite{Abe:2005} and according to \cite{Huang:2010} the representation category $\Rep(\VOA(\h)_\mathrm{ev})$ is a braided tensor category. We conjecture that $\Rep(\VOA(\h)_\mathrm{ev})$ is equivalent as a braided tensor category to $\SF(\h)$ for $\beta = e^{- \pi i d/8}$. To state this conjecture precisely, we will split it into two parts. The first part we can state now, while the second requires some additional notation.

\begin{conjecture}\label{conj:braided-equiv} {\bf (part 1)}
For each $X \in \SF(\h)$, $({\widehat X})_\mathrm{ev}$ is a (logarithmic) $\VOA(\h)_\mathrm{ev}$-module. 
The functor
\be
F ~:~ \SF(\h)\longrightarrow\Rep(\VOA(\h)_\mathrm{ev})\quad ,\quad X\longmapsto ({\widehat X})_\mathrm{ev} \ ,
\ee 
is a $\Cb$-linear equivalence. 
\end{conjecture}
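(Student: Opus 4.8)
\emph{Proof idea.} The plan is to factor the functor as $F = (-)_\mathrm{ev} \circ \widehat{(-)}$ and to leverage that the induction functor $\widehat{(-)}$ is already a $\Cb$-linear equivalence onto representations of the mode algebras by \eqref{eq:SF-iso-to-Rep}. First I would establish that $F$ is well defined: for $X \in \SF_0(\h)$ the module $\widehat X$ is an ordinary $\VOA(\h)$-module, while for $X \in \SF_1(\h)$ it is a module twisted by the parity involution $\omega$; this is the standard free-field dictionary between $\widehat\h$- (resp.\ $\widehat\h_\mathrm{tw}$-) modules and (twisted) modules of the VOSA. Since every mode of the even subalgebra $\VOA(\h)_\mathrm{ev}$ preserves parity, its action restricts to the even subspace $(\widehat X)_\mathrm{ev}$, and the module axioms there are inherited from those of the (twisted) VOSA-module. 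That $F$ is $\Cb$-linear and exact is then immediate, since induction is exact (by the PBW freeness of $U(\widehat\h)$ over $U(\widehat\h_{\ge 0}\op\Cb K)$) and passage to the even subspace is exact.

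Next I would pin down $F$ on the four simple objects of $\SF(\h)$. One finds $F(\one) = \VOA(\h)_\mathrm{ev}$ (the vacuum module, lowest weight $h_1 = 0$) and $F(\Pi\one) = \VOA(\h)_\mathrm{odd}$, which is irreducible by \cite{Abe:2005} and has lowest weight $h_2 = 1$; similarly $F(T)$ and $F(\Pi T)$ are the even parts of the two twisted modules $\widehat T$, $\widehat{\Pi T}$, with lowest $L_0$-eigenvalues $-d/16$ and $-d/16+\tfrac12$ read off from the $\delta = \tfrac12$ case of \eqref{eq:virasoro-action}. Matching these against the weights \eqref{eq:V0-irrep-weights} and invoking Abe's classification of the four irreducibles shows that $F$ induces a bijection on isomorphism classes of simples, with the character identities of Section \ref{sec:char-mod-inv} as an independent check. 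Faithfulness of $F$ is also clean: a morphism in $\SF(\h)$ corresponds to a $\widehat\h_{(\mathrm{tw})}$-module map $\widehat X \to \widehat Y$, and since $(\widehat X)_\mathrm{ev}$ generates $\widehat X$ as a $\widehat\h_{(\mathrm{tw})}$-module, such a map is determined by its restriction to the even parts. This yields a natural injection $\SF(X,Y) \hookrightarrow \Hom_{\VOA(\h)_\mathrm{ev}}(FX,FY)$.

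The substance of the argument is fullness together with essential surjectivity. Given an even-module map $\phi : FX \to FY$, one wants to promote it to a $\widehat\h_{(\mathrm{tw})}$-map; the point is that $\phi$ commutes a priori only with the quadratic (even) modes, and one must descend this to the linear odd modes $\alpha^i_m$ using that products $\alpha^i_m\alpha^j_n$ carry an even mode back into the even part where $\phi$ is controlled. Rather than do this directly, I would reduce both fullness and essential surjectivity to first-order data by dévissage: every object of either category has finite length (for $\Rep(\VOA(\h)_\mathrm{ev})$ by $C_2$-cofiniteness \cite{Abe:2005}), so it suffices to show that $F$ induces isomorphisms $\mathrm{Ext}^1_{\SF(\h)}(S,S') \xrightarrow{\ \sim\ } \mathrm{Ext}^1_{\Rep(\VOA(\h)_\mathrm{ev})}(FS,FS')$ on all pairs of simples. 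Granting this, the essential image is closed under extensions, contains all simples, and realises every extension class, hence is the whole category, and fullness follows on all objects. The groups on the $\SF(\h)$ side are purely algebraic: $\SF_0(\h) = \Rep(S(\h))$ with $S(\h)$ the exterior algebra on $\h$, so extensions among its simples are governed by $\h$, while $\SF_1(\h)$ is semisimple and there are no extensions across the two sectors.

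The hard part will be the $\mathrm{Ext}^1$ comparison on the vertex-algebra side, i.e.\ proving that $\Rep(\VOA(\h)_\mathrm{ev})$ has no extensions, morphisms, or indecomposables beyond those produced by $F$. This is a statement about the logarithmic, non-semisimple module category of the even symplectic fermion VOA and requires genuine analytic input---control of its Zhu algebra and of logarithmic intertwiners, going beyond \cite{Abe:2005,Nagatomo:2009xp}---rather than manipulations internal to $\SF(\h)$. An equivalent packaging would be to exhibit a projective generator $P$ of $\SF(\h)$, show $F(P)$ is projective in $\Rep(\VOA(\h)_\mathrm{ev})$ and generates, and verify that $F$ is an isomorphism on $\End(P)$, reducing the claim to a Morita-type identification via \cite{eo}; but transporting projectivity through $F$ is the same obstacle in different clothing. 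It is precisely this control of the VOA representation category that is currently missing, which is why the statement is recorded as a conjecture.
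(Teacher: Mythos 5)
The statement you are asked about is Conjecture \ref{conj:braided-equiv} (part 1): the paper deliberately records it as a conjecture and offers no proof, and the main classification result (Theorem \ref{thm:classify-extensions}) is explicitly conditional on it. Your proposal is therefore best judged as a proof \emph{strategy}, and as such it is reasonable and close in spirit to what the authors would presumably attempt; but it is not a proof, and you say as much yourself in the final paragraph. The decisive gap is exactly where you locate it: nothing internal to $\SF(\h)$, nor the equivalences \eqref{eq:SF-iso-to-Rep}, nor Abe's classification of the four simples, controls $\Rep(\VOA(\h)_\mathrm{ev})$ as an abelian category. One would need to show that this category has no simples, morphisms, or extensions beyond those in the image of $F$, which requires analytic/VOA-theoretic input (Zhu algebra, logarithmic intertwiners, projective covers) that is not available in the paper. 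Since that step is the whole content of the claim, the proposal does not establish the statement.

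Two further points where the sketch is thinner than it appears. First, even well-definedness of $F$ is not free: for $X\in\SF_1(\h)$ the object $\widehat X$ is an $\widehat\h_{\mathrm{tw}}$-module, and equipping $(\widehat X)_\mathrm{ev}$ with the structure of a logarithmic $\VOA(\h)_\mathrm{ev}$-module in the sense of \cite{Huang:2010} requires constructing the relevant vertex operators with their half-integer expansions; the paper only extends vertex operators in the first argument for $A\in\SF_0(\h)$ (Proposition \ref{prop:V-on-descendents}), and the $\SF_1(\h)$ case is itself part of Conjecture \ref{conj:braided-equiv} (part 2). Second, your d\'evissage reduction is slightly too optimistic as stated: an exact functor that is bijective on isomorphism classes of simples, fully faithful on simples, and an isomorphism on $\mathrm{Ext}^1$ between simples need not be an equivalence of finite-length abelian categories --- the inductive five-lemma argument for fullness and for closure of the essential image under extensions also requires injectivity of the induced maps on $\mathrm{Ext}^2$ (or an equivalent comparison of projective resolutions). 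This is a minor additional hypothesis compared with the main obstruction, but it should be stated if the reduction is to be used.
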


For $U,V,W \in \Rep(\VOA(\h)_\mathrm{ev})$ denote by $\tilde{\mathcal{V}}^U_{V,W}$ the space of logarithmic intertwining operators of type ${U \choose V \, W}$, see \cite[Def.\,3.14]{Huang:2010}, and by $\tilde\otimes$ the tensor product bifunctor of $\Rep(\VOA(\h)_\mathrm{ev})$ given in \cite[Def.\,4.15]{Huang:2010}.
Then $V \tilde\otimes W$ is a representing object for $U \mapsto \tilde{\mathcal{V}}^U_{V,W}$:
\be\label{eq:HLZ-tensor-def}
	\nu \,:\,
	\Hom_{\VOA(\h)_\mathrm{ev}}(V \tilde\otimes W,U)  
\xrightarrow{~~\sim~~} 
\tilde{\mathcal{V}}^U_{V,W} \ ,
\ee
naturally in $U$.\footnote{In \cite{Huang:2010}[Def.\,4.15] the tensor product is defined via an initial object condition, but this definition is equivalent to the one in \eqref{eq:HLZ-tensor-def}.}

For $A,B,C \in \SF(\h)$, consider a vertex operator $V \in \mathcal{V}_{A,B}^C$ as in Definition \ref{def:vertex-op}. We saw in Proposition \ref{prop:V-on-descendents} that for $A \in \SF_0(\h)$, $V$ has a unique extension from $A$ to $\widehat A$. Conjecturally, such a unique extension (with an appropriate change of \eqref{eq:mode-at-x_untw0} and \eqref{eq:mode-at-x_tw0}) is possible also for $A \in \SF_1(\h)$. 

\medskip\noindent
{\bf Conjecture \ref{conj:braided-equiv}.} {\bf (part 2)}
For each $A,B,C \in \SF(\h)$, $V \in \mathcal{V}_{A,B}^C$ has a unique extension to $V : \Rb_{>0} \times (\widehat A \otimes \widehat B)$. For each such $V$ there exists a unique intertwining operator $\tilde V \in \tilde{\mathcal{V}}^{F(C)}_{F(A),F(B)}$, whose formal expression converges to a smooth function on $\Rb_{>0}$ and which agrees with $V$, restricted to even subspaces. This induces an isomorphism
\be
	f_{A,B}^C \,:\, \mathcal{V}_{A,B}^C \longrightarrow \tilde{\mathcal{V}}^{F(C)}_{F(A),F(B)} \ .
\ee
natural in $A,B,C \in \SF(\h)$. The resulting isomorphism $F(A) \tilde\otimes F(B) \to F(A * B)$ turns $F$ into a braided monoidal functor.

\medskip

The isomorphism $F(A) \tilde\otimes F(B) \to F(A * B)$ is obtained by noting that $\SF(A*B,C)$ and $\Hom_{\VOA(\h)_\mathrm{ev}}(F(A*B),F(C))$ are naturally isomorphic by part 1. By \eqref{eq:SF-tensor-via-rep-obj} and \eqref{eq:HLZ-tensor-def} we obtain a natural isomorphism $\Hom_{\VOA(\h)_\mathrm{ev}}(F(A*B),F(C)) \to \Hom_{\VOA(\h)_\mathrm{ev}}(F(A) \tilde\otimes F(B),F(C))$. 

\medskip

Parts 1 and 2 of Conjecture \ref{conj:braided-equiv} will be our first assumption in Theorem \ref{thm:classify-extensions} below. Next we describe the second assumption, which extends results obtained in \cite{Huang:2014ixa}. 
The following theorem is shown in \cite{Huang:2014ixa} (see Theorem 3.2, Remark 3.3, and the last paragraph of the Appendix there).

\begin{theorem}\label{thm:HKL}
Let $\VOA$ be a simple $C_2$-cofinite VOA.
\begin{enumerate}
\item
Let $A$ be a commutative (unital, associative) algebra in $\Rep\VOA$ with trivial twist, and denote by $\iota: \one \to A$ and $\mu:A \tilde\otimes A \to A$ its unit and multiplication. Then
$A$, together with $\iota : \VOA \to A$ and $\nu(\mu) \in \tilde{\mathcal{V}}^{A}_{A,A}$ (recall \eqref{eq:HLZ-tensor-def}) as state-field map, is a VOA extending $\VOA$. Denote this extension by $\VOA_A$.
\item
The assignment $A \mapsto \VOA_A$ defines a functorial equivalence from the category of 
\begin{quote}
commutative algebras in $\Rep\VOA$ with trivial twist,  and with algebra homomorphisms as morphisms
\end{quote}
to the category of 
\begin{quote}
extensions of $\VOA$ and VOA homomorphisms, commuting with the embedding of $\VOA$. 
\end{quote}
\end{enumerate}
\end{theorem}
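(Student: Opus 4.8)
The plan is to prove the correspondence in both directions and then check that the two assignments are mutually inverse and natural. The essential input I would use is Huang's construction of the braided tensor structure on $\Rep\VOA$ for $C_2$-cofinite $\VOA$, together with the representing-object isomorphism $\nu$ of \eqref{eq:HLZ-tensor-def}, which is the dictionary translating categorical morphisms into (logarithmic) intertwining operators and back. For part 1, given a commutative algebra $(A,\mu,\iota)$ with trivial twist, I would set $Y_A := \nu(\mu)$ as the state-field map, take $\iota(\one)$ as vacuum and $\iota(\text{conformal vector of }\VOA)$ as conformal vector, and verify the VOA axioms. The vacuum property $Y_A(\one,x) = \id$ and the creation property $Y_A(a,x)\one \in A[[x]]$ with constant term $a$ are the images under $\nu$ of the left and right unit axioms $\mu\circ(\iota\tilde\otimes\id_A) = \lambda_A$ and $\mu\circ(\id_A\tilde\otimes\iota) = \rho_A$, so they are immediate.

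The heart of part 1 is the Jacobi identity, which I would obtain from weak commutativity (locality) and weak associativity. Locality of $Y_A$ is exactly the image under $\nu$ of the commutativity axiom $\mu\circ c_{A,A} = \mu$: the braiding $c_{A,A}$ corresponds, through Huang's commutativity theorem for intertwining operators, to the analytic continuation exchanging the two insertion points, so the algebra identity becomes the statement that the two iterated operators agree after continuation. Weak associativity is likewise the image of $\mu\circ(\mu\tilde\otimes\id_A) = \mu\circ(\id_A\tilde\otimes\mu)\circ\alpha_{A,A,A}$, translated via Huang's associativity theorem. The remaining and decisive point is single-valuedness, i.e.\ that $Y_A(a,x)b$ involves only integer powers of $x$; this is where the trivial-twist hypothesis enters. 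Since the twist on a module is $\theta_A = e^{-2\pi i L_0}$ (the paper's convention), $\theta_A = \id_A$ forces the multiplicative Jordan decomposition $e^{-2\pi i L_0^{\mathrm{ss}}}\,e^{-2\pi i L_0^{\mathrm{nil}}} = \id$, hence $L_0^{\mathrm{nil}} = 0$ and $L_0$ acts with integer eigenvalues on $A$. Thus no logarithms or branch cuts appear, and the a priori logarithmic intertwining operator $\nu(\mu)$ is upgraded to a genuine $\Zb$-graded vertex operator; combined with locality and associativity this yields the full Jacobi identity for $\VOA_A$.

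For part 2, given a conformal extension $\VOA\hookrightarrow\tilde\VOA$, I would view $\tilde\VOA$ as a $\VOA$-module via the embedding, take $\iota$ to be that embedding, and set $\mu := \nu^{-1}(Y_{\tilde\VOA})$, using that $Y_{\tilde\VOA}$ restricts to an intertwining operator of type $\binom{\tilde\VOA}{\tilde\VOA\,\tilde\VOA}$ for $\VOA$. Unitality and associativity of $\mu$ are then the $\nu^{-1}$-images of the vacuum/creation and associativity axioms of $\tilde\VOA$, while commutativity $\mu\circ c = \mu$ is the image of the locality of $Y_{\tilde\VOA}$; trivial twist holds automatically because conformal weights in a VOA are integers, so $\theta_{\tilde\VOA} = e^{-2\pi i L_0} = \id$. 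That the two assignments are mutually inverse is built in, since $Y_{\VOA_A} = \nu(\mu)$ and $\nu^{-1}(Y_{\tilde\VOA}) = \mu$. On morphisms, an algebra homomorphism $A\to B$ is by the representing property the same datum as a compatible map of the associated intertwining operators, i.e.\ a VOA homomorphism $\VOA_A\to\VOA_B$ fixing $\VOA$, and conversely; naturality of $\nu$ in the target variable then promotes this into the claimed functorial equivalence.

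The hard part will be justifying that the categorical commutativity and associativity of $A$ translate into weak commutativity and weak associativity of $Y_A$ in exactly the form the Jacobi identity requires. This rests on Huang's convergence, associativity and commutativity results for logarithmic intertwining operators and on carefully tracking the analytic continuations that implement the braiding $c_{A,A}$ and the associator $\alpha_{A,A,A}$; the genuine bookkeeping lies in reconciling the formal-variable VOA axioms with the complex-analytic categorical axioms, which is where I expect the main obstacle to be.
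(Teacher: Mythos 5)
The paper does not prove this theorem itself: it is imported verbatim from Huang--Kirillov--Lepowsky \cite{Huang:2014ixa} (see Theorem 3.2, Remark 3.3 and the last paragraph of the Appendix there). Your sketch reconstructs exactly the strategy of that cited proof --- passing the algebra axioms through the representing isomorphism $\nu$ so that commutativity and associativity of $\mu$ become locality and weak associativity of $\nu(\mu)$ via Huang's commutativity/associativity theorems, and using the trivial-twist hypothesis to force a semisimple, integrally graded $L_0$ --- so this is the same approach, with the real work (the analytic bookkeeping between the formal-variable and categorical formulations) correctly identified.
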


In addition we need to assume the following
statement, which is shown in \cite{Huang:2014ixa}  for rational VOAs subject to some additional technical conditions.
\begin{itemize}
\item[\bf (A)]
Let $M \in \Rep\VOA$ be a local $A$-module with action $\rho : A \tilde\otimes M \to M$.
The assignment $(M , \rho) \mapsto (M , \nu(\rho))$ (recall \eqref{eq:HLZ-tensor-def}) gives a $\Cb$-linear equivalence between the category of local $A$-modules and logarithmic $\VOA_A$-modules.
\end{itemize}
For our application we require Assumption (A) only for $\VOA = \VOA(\h)_\mathrm{ev}$.

\subsection{Classification}

Fix a finite dimensional symplectic space $\h$ and abbreviate $\VOA = \VOA(\h)_\mathrm{ev}$.

\begin{lemma}\label{lem:class1}
Suppose Conjecture \ref{conj:braided-equiv} and Assumption (A) hold. Let $\mathbb{U},\mathbb{U}'$ be holomorphic extensions of $\VOA$. Then
\begin{enumerate}
\item
there exists a Lagrangian subspace $\f \subset \h$ such that
$\mathbb{U}\cong\VOA_{F(H(\f))}$ as extensions (i.e.\ via an isomorphism of VOAs that commutes with the embedding of $\VOA$ into $\mathbb{U}$ and $\VOA_{F(H(\f))}$). Dito for $\f'$ and $\mathbb{U}'$.
\item
$\mathbb{U}$ and $\mathbb{U}'$ are isomorphic as extensions iff $\f = \f'$.
\end{enumerate}
\end{lemma}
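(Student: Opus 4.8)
The plan is to push both extensions through the two equivalences supplied by the hypotheses and then invoke the classification of Lagrangian algebras. First I would apply Theorem \ref{thm:HKL}: every extension $\mathbb{U}$ of $\VOA$ is of the form $\VOA_A$ for a commutative algebra $A$ in $\Rep(\VOA)$ with trivial twist, and the assignment $A \mapsto \VOA_A$ is an equivalence onto the category of extensions of $\VOA$, the morphisms being VOA homomorphisms commuting with the embedding of $\VOA$. In particular, two extensions are isomorphic \emph{as extensions} precisely when the associated algebras are isomorphic as algebras in $\Rep(\VOA)$; I will use this refinement throughout, since part 2 concerns isomorphism as extensions and not mere VOA isomorphism.

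The central step is to recognise that holomorphicity of $\mathbb{U} = \VOA_A$ forces $A$ to be Lagrangian. Since $\mathbb{U}$ is holomorphic it is in particular rational, so its module category is semisimple with the single simple object $\mathbb{U}$; hence every $\mathbb{U}$-module is a direct sum of copies of $\mathbb{U}$. Assumption (A) identifies the category of local $A$-modules with the category of $\VOA_A = \mathbb{U}$-modules, sending the regular module $A$ to the regular module $\mathbb{U}$. Therefore every local $A$-module is a direct sum of copies of $A$, which together with the trivial twist of $A$ is exactly the definition of a Lagrangian algebra.

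Next I would transport $A$ back along the ribbon equivalence $F$ of Conjecture \ref{conj:braided-equiv} (taken with $\beta = e^{-\pi i d/8}$, so that the ribbon twists correspond). A ribbon equivalence preserves commutative algebras, trivial twist, categories of modules and the locality condition, so $F^{-1}(A)$ is a Lagrangian algebra in $\SF(\h)$. By Theorem \ref{thm:Lag-in-SF}(3) we then have $F^{-1}(A) \cong H(\f)$ as algebras for some Lagrangian subspace $\f \subset \h$; applying $F$ gives $A \cong F(H(\f))$, and the functoriality of $A \mapsto \VOA_A$ from Theorem \ref{thm:HKL}(2) yields $\mathbb{U} \cong \VOA_{F(H(\f))}$ as extensions. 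This establishes part 1, with the identical argument for $\mathbb{U}'$ and $\f'$. Part 2 is then a chain of equivalences: $\mathbb{U} \cong \mathbb{U}'$ as extensions iff $A \cong A'$ as algebras (Theorem \ref{thm:HKL}(2)) iff $H(\f) \cong H(\f')$ as algebras (apply $F^{-1}$, a ribbon equivalence) iff $\f = \f'$ (Theorem \ref{thm:Lag-in-SF}(4), i.e.\ Lemma \ref{lem:unique-from-f}).

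I expect the main obstacle to be the central step: rigorously verifying that holomorphicity upgrades ``the only simple $\mathbb{U}$-module is $\mathbb{U}$'' to ``every local $A$-module is a direct sum of $A$''. This rests on the semisimplicity of the module category, which is guaranteed by the rationality built into the definition of holomorphic, and on Assumption (A) matching the regular $A$-module with the regular $\mathbb{U}$-module. A secondary point demanding care is the consistent bookkeeping of the ``as extensions'' refinement — isomorphisms commuting with the embedding of $\VOA$ — at every stage, which is precisely what the functorial form of Theorem \ref{thm:HKL} delivers.
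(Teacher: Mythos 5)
Your proposal is correct and follows the same overall route as the paper: Theorem \ref{thm:HKL} to replace $\mathbb{U}$ by an algebra $A$ with trivial twist, Assumption (A) to see that holomorphicity forces $A$ to be Lagrangian, Conjecture \ref{conj:braided-equiv} and Theorem \ref{thm:Lag-in-SF} to identify $A$ with $F(H(\f))$, and the same chain of equivalences for part 2. The one place where you genuinely diverge is the central step, i.e.\ how one passes from ``$\mathbb{U}$ is holomorphic'' to ``every local $A$-module is a direct sum of copies of $A$''. You invoke the rationality built into the paper's definition of holomorphic to get semisimplicity of the module category of $\mathbb{U}$ and then transport complete reducibility back through the equivalence of Assumption (A). The paper instead uses only the part of Assumption (A) concerning \emph{simple} objects -- every simple local $A$-module is $A$ -- and then works entirely on the categorical side: the category of local $A$-modules is a finite tensor category (Lemma \ref{fcm}, with finiteness of $\Rep\VOA$ coming from Conjecture \ref{conj:braided-equiv} and $\SF(\h)$), and by \cite[Thm.\,2.17]{eo} the tensor unit of a finite tensor category over a field of characteristic zero has no non-trivial self-extensions, so the category is $\vect$. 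The paper's variant buys independence from the question of whether the ``logarithmic $\VOA_A$-modules'' of Assumption (A) are admissible in the sense required for rationality to apply; your variant is shorter but implicitly relies on that identification. Both are acceptable given the definitions in the paper, and everything else in your argument, including the bookkeeping of ``isomorphism as extensions'' via the functoriality in Theorem \ref{thm:HKL}, matches the paper's proof.
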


\begin{proof}
1.\ By Theorem \ref{thm:HKL}, there is a commutative algebra $A \in \Rep\VOA$ with trivial twist s.t.\ $\mathbb{U} = \VOA_A$. Since $\mathbb{U}$ is a holomorphic extension, by Assumption (A) every simple local $A$-module is isomorphic to $A$. 
Hence, up to isomorphism, the tensor unit $A$ is the only simple object in the finite tensor category of local $A$-modules (cf.\ Lemma \ref{fcm}
	-- $\Rep\VOA$ is finite by Conjecture \ref{conj:braided-equiv} as $\SF(\h)$ is finite).
It then follows from \cite[Thm.\,2.17]{eo} that this category is equivalent to $\vect$. 
Thus $A$ is Lagrangian and by Theorem \ref{thm:Lag-in-SF} and Conjecture \ref{conj:braided-equiv} there is a Lagrangian subspace $\f \subset \h$ such that $A \cong F(H(\f))$ as algebras.

\smallskip\noindent
2.\ Let $\mathbb{U}' = \VOA_{A'}$.
The existence of an isomorphism of extensions $\mathbb{U} \cong \mathbb{U}'$ is equivalent to the existence of an isomorphism of algebras $A \cong A'$ in $\Rep\VOA$ (Theorem \ref{thm:HKL}). This in turn is equivalent to an isomorphism of algebras between $H(\f)$ and $H(\f')$, i.e.\ to $\f=\f'$ (Theorem \ref{thm:Lag-in-SF}).
\end{proof}

\begin{lemma}\label{lem:class2}
Suppose Conjecture \ref{conj:braided-equiv} holds.
Let $\f$ and $\f'$ be two Lagrangian subspaces of $\h$. Then the VOAs $\VOA_{F(H(\f))}$ and $\VOA_{F(H(\f'))}$ are isomorphic as VOAs.
\end{lemma}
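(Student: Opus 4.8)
The plan is to exploit symplectic symmetry: any two Lagrangians are related by an element of $Sp(\h)$, and such an element induces \emph{both} the ribbon auto-equivalence $G_g$ of $\SF(\h)$ (which already relates $H(\f)$ and $H(\f')$ by Theorem \ref{thm:Lag-in-SF}(5)) \emph{and} a genuine VOA automorphism of $\VOA$. The second of these is what lets us trade an isomorphism ``of extensions'' for the weaker isomorphism ``of VOAs'' that the lemma asks for. First I would invoke the standard fact that $Sp(\h)$ acts transitively on the set of Lagrangian subspaces of $(\h,(-,-))$ (extend a symplectic basis adapted to $\f$ to one adapted to $\f'$) to obtain $g \in Sp(\h)$ with $g(\f)=\f'$. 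By Theorem \ref{thm:Lag-in-SF}(5) this gives $G_{g^{-1}}(H(\f)) \cong H(\f')$ as algebras in $\SF(\h)$, and applying the braided monoidal equivalence $F$ of Conjecture \ref{conj:braided-equiv} yields $F(G_{g^{-1}}(H(\f))) \cong F(H(\f'))$ as commutative algebras with trivial twist in $\Rep\VOA$; by Theorem \ref{thm:HKL} this already produces an isomorphism of \emph{extensions} $\VOA_{F(H(\f'))} \cong \VOA_{F(G_{g^{-1}}(H(\f)))}$.

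Next I would produce the VOA automorphism. The automorphism $\hat g$ of $\widehat\h$ induced by $g$ (see \eqref{eq:nat-xfer-hatG-Ghat}) gives $\Phi_g := U(\hat g)$ acting on $\VOA(\h)=\widehat{\Cb^{1|0}}$; since $g$ preserves $(-,-)$, the defining sum of $T^{SF}$ is the image of the $g$-invariant copairing dual to $(-,-)$, so $\Phi_g$ fixes the vacuum and the conformal vector and is a VOSA automorphism, restricting to a VOA automorphism of $\VOA=\VOA(\h)_\mathrm{ev}$. The key claim is the compatibility
\[
	F \circ G_{g^{-1}} \;\cong\; \Phi_g^{\,*} \circ F
\]
as braided monoidal functors, where $\Phi_g^{\,*}$ denotes pullback of $\VOA$-modules along $\Phi_g$ (the precise automorphism may be $\Phi_g$ or $\Phi_{g^{-1}}$, which is immaterial below since either is a VOA automorphism of $\VOA$). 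I would establish this from the natural isomorphism $\varphi_{g,A}=U(\hat g)\otimes\id$ of \eqref{eq:nat-xfer-hatG-Ghat}, which identifies $\widehat{(-)}\circ G_g$ with $\widehat G_g\circ\widehat{(-)}$: because $\Phi_g=U(\hat g)$ is exactly the operator twisting the $\VOA$-action (which is assembled from $\widehat\h$-modes through vertex operators), $\varphi_{g,\cdot}$ induces the required natural isomorphism at the level of $\VOA$-modules, and Lemma \ref{lem:Gg-functor-on-vertex-op} upgrades it to a \emph{monoidal} one, since the monoidal structure of $F$ (Conjecture \ref{conj:braided-equiv}, part 2) is built from vertex operators and these are $G_g$-equivariant. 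Transporting $H(\f)$ through this monoidal natural isomorphism then gives $F(G_{g^{-1}}(H(\f))) \cong \Phi_g^{\,*}(F(H(\f)))$ as algebras in $\Rep\VOA$.

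Finally I would use a base-change principle: for any $\Psi \in \Aut(\VOA)$ and any commutative algebra $A$ with trivial twist, $\VOA_{\Psi^{*}A} \cong \VOA_A$ as VOAs. The point is elementary: writing $\VOA_A$ as a VOA $\mathbb{U}$ together with the embedding $\iota:\VOA\hookrightarrow\mathbb{U}$, replacing $\iota$ by $\iota\circ\Psi$ produces another extension with the \emph{same} underlying VOA $\mathbb{U}$, whose associated algebra under the equivalence of Theorem \ref{thm:HKL} is precisely $\Psi^{*}A$; hence $\VOA_{\Psi^{*}A}$ and $\VOA_A$ have isomorphic underlying VOAs (they differ only in the twisted embedding of $\VOA$). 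Chaining the three isomorphisms,
\[
	\VOA_{F(H(\f'))} \;\cong\; \VOA_{F(G_{g^{-1}}(H(\f)))} \;\cong\; \VOA_{\Phi_g^{\,*}(F(H(\f)))} \;\cong\; \VOA_{F(H(\f))}
\]
as VOAs, which is the assertion.

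The genuinely technical step — the main obstacle — is the monoidal compatibility $F\circ G_{g^{-1}}\cong\Phi_g^{\,*}\circ F$ relating the categorical auto-equivalence $G_{g^{-1}}$ of $\SF(\h)$ to the VOA-theoretic pullback along $\Phi_g$; it rests squarely on Conjecture \ref{conj:braided-equiv} (for the existence and monoidal structure of $F$) together with Lemma \ref{lem:Gg-functor-on-vertex-op} and \eqref{eq:nat-xfer-hatG-Ghat} (for the $Sp(\h)$-equivariance of vertex operators). Everything else — transitivity of $Sp(\h)$ on Lagrangians, the invocation of Theorem \ref{thm:HKL}, and the twist-the-embedding observation — is routine.
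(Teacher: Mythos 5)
Your proposal is correct and follows essentially the same route as the paper: both proofs pick $g \in Sp(\h)$ with $g(\f)=\f'$, invoke Theorem \ref{thm:Lag-in-SF}(5) to identify $G_g(H(\f))$ with $H(\f')$ as algebras, and then use the natural isomorphism \eqref{eq:nat-xfer-hatG-Ghat} together with the $Sp(\h)$-equivariance of vertex operators (Lemma \ref{lem:Gg-functor-on-vertex-op}) to produce the VOA isomorphism. The only difference is packaging: the paper directly takes $\varphi_{g,H}$ restricted to even subspaces and checks it preserves the grading, the vacuum, the stress tensor and the vertex operators, whereas you factor the same map through a VOA automorphism $\Phi_g = U(\hat g)$ of the base and a base-change observation for the correspondence of Theorem \ref{thm:HKL} -- a valid and equivalent formulation.
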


An isomorphism of VOAs preserves the stress tensor, but for $\f \neq \f'$ the above isomorphism will not be compatible with the embedding of $\VOA$, i.e.\ it is not an isomorphism of extensions.

\begin{proof}
Let $H = H(\f)$, $H' = H(\f')$, and denote by $\mu$ and $\mu'$ the multiplication on $H$ and $H'$, respectively. Write $V = V(\mu,-)$ and $V' = V(\mu',-)$ for the vertex operators assigned to $\mu$ and $\mu'$ via \eqref{eq:SF-tensor-via-rep-obj}.

After invoking Conjecture \ref{conj:braided-equiv}, to prove the lemma we need to find a linear isomorphism $\varphi : (\widehat H)_\mathrm{ev} \to (\widehat H')_\mathrm{ev}$ which
\begin{enumerate}
\item preserves the $\mathbb{Z}$-grading,
\item maps $1_H$ to $1_{H'}$,
\item preserves the stress tensor,
\item satisfies $\varphi \circ V(x) = V'(x) \circ (\varphi \otimes \varphi)$.
\end{enumerate}
Pick $g \in Sp(\h)$ such that $g(\f)=\f'$ and abbreviate
 $J := G_{g}$
	(as defined in the end of Section \ref{sec:brmon-on-rep}).  
From Theorem \ref{thm:Lag-in-SF} we know that $J(H) \cong H'$ as algebras. Without loss of generality we can therefore assume that $J(H) = H'$ as algebras.

Define $\varphi$ to be the isomorphism $\varphi_{g,H} : \widehat J(\widehat H) \to \widehat{J(H)}$ from \eqref{eq:nat-xfer-hatG-Ghat}, restricted to even subspaces. Note that as $\mathbb{Z}$-graded vector spaces $\widehat J(\widehat H) = \widehat H$, and so $\varphi$ maps between the correct spaces (but it does not intertwine the $\widehat\h_{(\mathrm{tw})}$-action).

Property 1 is clear, property 2 follows from naturality of $\varphi_{g,-}$ (see \eqref{eq:nat-xfer-hatG-Ghat}), and for property 3 note that the element \eqref{eq:SF-stress-tensor} only depends on the symplectic structure of $\h$. Finally, property 4 is Lemma \ref{lem:Gg-functor-on-vertex-op}.
\end{proof}

Lemmas \ref{lem:class1} and \ref{lem:class2}, together with Corollary \ref{cor:symp-even-in-lattice}, establish the following theorem:

\begin{theorem}\label{thm:classify-extensions}
Suppose Conjecture \ref{conj:braided-equiv} and Assumption (A) hold. 
Then $\VOA=\VOA(\h)_\mathrm{ev}$ has holomorphic extensions iff $d \in 16\Zb_{>0}$. For $d \in 16\Zb_{>0}$,
\begin{enumerate}
\item the assignment $\f \mapsto \VOA_{F(H(\f))}$ gives a bijection between Lagrangian subspaces of $\h$ and isomorphism classes of holomorphic extensions of $\VOA$ (i.e.\ extensions modulo isomorphisms of VOAs commuting with the embedding of $\VOA$),
\item all holomorphic extensions of $\VOA$ are isomorphic as VOAs to $\VOA_{D_{d/2}^+}$ with stress tensor as in Theorem \ref{thm:symp-ferm-in-lattice}.
\end{enumerate}
\end{theorem}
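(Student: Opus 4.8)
The plan is to assemble Theorem~\ref{thm:classify-extensions} from three ingredients already prepared: Lemma~\ref{lem:class1}, Lemma~\ref{lem:class2}, and Corollary~\ref{cor:symp-even-in-lattice}, together with the characterisation of Lagrangian algebras in Theorem~\ref{thm:Lag-in-SF}. Throughout, the standing hypotheses are that Conjecture~\ref{conj:braided-equiv} and Assumption~(A) hold; these let us pass freely between holomorphic extensions of $\VOA = \VOA(\h)_\mathrm{ev}$ and Lagrangian algebras in $\Rep\VOA \cong \SF(\h)$, via Theorem~\ref{thm:HKL} and the braided equivalence $F$.

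First I would settle the dichotomy ``$\VOA$ has a holomorphic extension iff $d \in 16\Zb_{>0}$''. For the ``only if'' direction, suppose $\mathbb{U}$ is a holomorphic extension. By Theorem~\ref{thm:HKL} it is of the form $\VOA_A$ for a commutative algebra $A \in \Rep\VOA$ with trivial twist, and by Assumption~(A) every simple local $A$-module is isomorphic to $A$; hence $A$ is Lagrangian. Transporting $A$ through $F$ yields a Lagrangian algebra in $\SF(\h)$, which by Theorem~\ref{thm:Lag-in-SF}(1) forces $\beta = 1$. Since for symplectic fermions $\beta = e^{-\pi i d/8}$, this is equivalent to $d \in 16\Zb$. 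For the ``if'' direction, when $d \in 16\Zb_{>0}$ (so $r = d/2 \in 8\Zb_{>0}$) Corollary~\ref{cor:symp-even-in-lattice} already exhibits the lattice VOA $\VOA_{D_{d/2}^+}$, with the shifted stress tensor, as a holomorphic extension; this is unconditional and so settles existence.

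For part~1, I would verify that $\f \mapsto \VOA_{F(H(\f))}$ is well defined and bijective. Well-definedness: for a Lagrangian subspace $\f$, $H(\f)$ is a Lagrangian algebra in $\SF(\h)$ by Theorem~\ref{thm:Lag-in-SF}(2), so $F(H(\f))$ is a commutative trivial-twist algebra in $\Rep\VOA$ all of whose local modules are direct sums of the unit; by Assumption~(A) the extension $\VOA_{F(H(\f))}$ is therefore holomorphic. Surjectivity is exactly Lemma~\ref{lem:class1}(1): any holomorphic extension is isomorphic, as an extension, to $\VOA_{F(H(\f))}$ for some $\f$. Injectivity is Lemma~\ref{lem:class1}(2): $\VOA_{F(H(\f))} \cong \VOA_{F(H(\f'))}$ as extensions iff $\f = \f'$. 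Together these give the claimed bijection onto isomorphism classes of holomorphic extensions, where ``isomorphism'' means a VOA isomorphism commuting with the embedding of $\VOA$.

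Finally, part~2 follows by combining the surjectivity of part~1 with the coarser rigidity of Lemma~\ref{lem:class2}. By part~1 every holomorphic extension is $\VOA_{F(H(\f))}$ for some Lagrangian $\f$, and Lemma~\ref{lem:class2} shows that for any two Lagrangian subspaces the VOAs $\VOA_{F(H(\f))}$ and $\VOA_{F(H(\f'))}$ are isomorphic as abstract VOAs. Hence all holomorphic extensions are mutually isomorphic as VOAs, and since Corollary~\ref{cor:symp-even-in-lattice} identifies one representative with $\VOA_{D_{d/2}^+}$ carrying stress tensor~\eqref{eq:T-FF_for_Zd2}, they are all isomorphic to this lattice VOA. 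The main subtlety worth flagging is the interplay of the two notions of isomorphism: part~1 concerns extensions, under which distinct Lagrangian subspaces give genuinely distinct classes, whereas part~2 uses plain VOA isomorphisms, which collapse them to one. The bridge is the symplectic automorphism $g$ with $g(\f) = \f'$ and the ribbon autoequivalence $G_g$ of Lemma~\ref{lem:class2}, producing a VOA isomorphism that does not respect the embedding; this is precisely why the refinement visible in part~1 is lost at the level of part~2.
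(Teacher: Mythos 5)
Your proposal is correct and follows essentially the same route as the paper, whose proof of this theorem is simply the observation that Lemmas \ref{lem:class1} and \ref{lem:class2} together with Corollary \ref{cor:symp-even-in-lattice} establish it; you have merely spelled out the routine assembly (including the $\beta=1 \Leftrightarrow d\in 16\Zb$ dichotomy via Theorem \ref{thm:Lag-in-SF}) that the paper leaves implicit.
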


\appendix

\section{Proof of Proposition \ref{prop:V-on-descendents}}\label{app:V-on-descendents}

Equations \eqref{eq:mode-at-x_untw0} and \eqref{eq:mode-at-x_tw0} can be used to reduce $\widehat V(x)$ to $V(x)$, so that
uniqueness and property (i) from Definition \ref{def:vertex-op} are immediate. 
Existence is more involved: In Lemma \ref{lem:extension-aux1}, we establish a ``coproduct property'' analogous to \cite{Gaberdiel:1993mt,Gaberdiel:1996kf}. We then proceed in three steps to show existence of $\widehat V(x)$. 

\medskip

We will use a constant $\delta$ which takes the value $\delta=0$ for $B,C \in \SF_0(\h)$ and $\delta = \frac12$ for $B,C \in \SF_1(\h)$. In the case $\delta=\frac12$, we will need the auxiliary Lie super-algebra $\widetilde\h$, which has the same basis as $\widehat\h$ but the (symmetric) bracket is given by
\be
	[a_m,b_n]
	~=~ (a,b) \, \big(
	m   x \delta_{m+n,0}
	+
	\big(m+\tfrac12\big) \delta_{m+n+1,0} \big) \, K \ ,
\ee
for some fixed $x \in \Rb_{>0}$.
A similar Lie super-algebra was used in \cite[App.\,A.3]{Runkel:2012cf} to treat twisted sector vertex operators. We will abbreviate
\be
	\h^\delta = \begin{cases} ~\widehat\h &:~ \delta = 0 \\ ~\widetilde\h &: ~\delta=\tfrac12 \end{cases} \qquad .
\ee

For $a_m \in \h^\delta$ consider the endomorphisms $\rho^\delta(a_m)$ of $\widehat B$ and $\lambda^\delta(a_m)$ of $\overline{\widehat C}$ defined as follows:
\be\label{eq:lamrho-modes-def}
\rho^\delta(a_m) = -
\sum_{k=0}^\infty {m \choose k} e^{\pi i(m-k)} x^{m-k} \, a_{k+\delta}
\quad , \quad
\lambda^\delta(a_m) = \sum_{k=0}^\infty {m \choose k} (-x)^k \, a_{m+\delta-k}  
\ ,
\ee
The infinite sum in $\lambda^\delta(a_m)$ does not truncate, but it only has finitely many contributions to any given graded component of $\overline{\widehat C}$. Define the endomorphism $\xi^\delta$ of $\widehat A$ as
\be\label{eq:xi-modes-def}
	\xi^0(a_m) = a_m \quad , \quad
	\xi^{\frac12}(a_m) = \sum_{k=0}^\infty {\tfrac12 \choose k} x^{\frac12 -k} \, a_{m+k} \ .
\ee
With these notations, conditions \eqref{eq:mode-at-x_untw0} and \eqref{eq:mode-at-x_tw0}  become simply
\be\label{eq:extension-condition-aux1}
	\widehat V(x) \big(\xi^\delta(a_m) \otimes \id\big)
	\,=\, \lambda^\delta(a_m) \widehat V(x) \,+\, \widehat V(x) \big(\id \otimes \rho^\delta(a_m)\big) \ .
\ee

\begin{lemma}\label{lem:extension-aux1}
$\xi^\delta$ and $\lambda^\delta$ define representations of $\h^\delta$ with $K=1$, while $\rho^\delta$ defines a representation of $\h^\delta$ with $K=0$.
\end{lemma}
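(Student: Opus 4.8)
The plan is to verify the defining Lie-superalgebra relations directly from the mode expansions \eqref{eq:lamrho-modes-def} and \eqref{eq:xi-modes-def}, treating the sectors $\delta=0$ and $\delta=\tfrac12$ and the three maps separately. In each case the target operators act on a module ($\widehat A$, $\widehat B$, or $\overline{\widehat C}$) on which the modes satisfy the standard relation $[a_p,b_q]=(a,b)\,p\,\delta_{p+q,0}$, with $p,q\in\Zb$ in the untwisted and $p,q\in\Zb+\tfrac12$ in the twisted case; recall that all modes are odd, so $[-,-]$ denotes the anticommutator. Since $\xi^0=\id$ trivially gives a representation of $\widehat\h$ with $K=1$, the content lies in the remaining five cases.

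The computation is uniform. Substituting the expansions into $[\,\cdot\,,\cdot\,]$ and applying the module bracket collapses the resulting double sum via its single Kronecker delta to a one-parameter sum of the shape $\sum_k \binom{m}{k}\binom{n}{N-k}\,(\text{linear in }k)$ (and analogously with $\binom{1/2}{k}$ for $\xi^{1/2}$). These are evaluated by Vandermonde's convolution $\sum_k\binom{\alpha}{k}\binom{\beta}{N-k}=\binom{\alpha+\beta}{N}$ together with its weighted form, obtained from $k\binom{\alpha}{k}=\alpha\binom{\alpha-1}{k-1}$. For $\lambda^\delta$ and $\xi^{1/2}$ the collapsed sum turns out to be nonzero precisely for the finitely many values of $m+n$ at which the surviving binomial $\binom{m+n}{\cdot}$ (resp.\ $\binom{1}{\cdot}$) is supported, and the surviving coefficient reproduces exactly the $\h^\delta$-bracket with $K=1$. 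For $\rho^\delta$, the sums run only over $k,l\ge 0$ while the Kronecker delta demands $k+l=0$ (for $\delta=0$) or $k+l=-1$ (for $\delta=\tfrac12$): in the first case the unique surviving term carries the weight factor $k=0$ and so vanishes, in the second there is no term at all, so $[\rho^\delta(a_m),\rho^\delta(b_n)]=0$, which is precisely the $K=0$ condition.

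The main obstacle is the twisted sector $\delta=\tfrac12$. Here the source is the non-standard superalgebra $\widetilde\h$, whose bracket contains the two terms $m\,x\,\delta_{m+n,0}$ and $(m+\tfrac12)\,\delta_{m+n+1,0}$, and one must check that the weighted Vandermonde sum reproduces both of these, with the correct coefficients $mx$ and $m+\tfrac12$, and nothing else. Concretely, for $\lambda^{1/2}$ the Kronecker delta forces $k+l=m+n+1$, so the collapsed sum is supported exactly at $m+n\in\{0,-1\}$; evaluating the (two) surviving terms reproduces $mx$ and $m+\tfrac12$, matching $\widetilde\h$, while the check for $\xi^{1/2}$ uses $\sum_k\binom{1/2}{k}\binom{1/2}{N-k}=\binom{1}{N}$, nonzero only for $N\in\{0,1\}$. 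The points requiring care are the degeneration of the weighted Vandermonde identity exactly at the edge values of $m+n$ (where $\binom{m+n-1}{m+n-1}$ becomes ill-defined and the relevant term must instead be computed by hand), the fact that for negative $m$ the binomials $\binom{m}{k}$ do not truncate (only the Kronecker delta does), and the consistent bookkeeping of the branch factors $e^{\pi i(m-k)}$ in $\rho^\delta$; none of these affects the final answer, but all must be tracked.

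Alternatively, one can give a more conceptual proof by rewriting each map as a contour integral of the field $a(w)=\sum_p a_p w^{-p-1}$ against the kernel $(w-x)^m$ (dressed by a branch factor in the twisted case), with the three maps $\xi^\delta,\rho^\delta,\lambda^\delta$ corresponding to contours around $0$, around $x$, and around both. The representation property of each then follows because the commutator of two such contour integrals is again a contour integral of the same kernel, whose residue computes the $\h^\delta$-bracket; this reproduces $K=1$ for the contours enclosing the origin and $K=0$ for the one localized at $x$. Setting up this contour formalism rigorously is, however, comparable in length to the direct binomial computation above, so I would present the latter.
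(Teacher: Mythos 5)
Your proof is correct and follows essentially the same route as the paper: substitute the mode expansions into the superbrackets, collapse the double sum with the Kronecker delta, and verify that the surviving coefficients reproduce the $\h^\delta$-bracket (with the $\rho^\delta$ case vanishing outright since only non-negative, resp.\ strictly positive, modes appear). The only difference is cosmetic -- you close the binomial sums with (weighted) Vandermonde convolution, whereas the paper evaluates the same sums as residues of expressions like $\mathrm{Res}_0\big(x^n(1+x)^{1/2}\tfrac{d}{dx}\{x^m(1+x)^{1/2}\}\big)$; your attention to the edge cases $m+n\in\{0,-1\}$, where the generic identity degenerates, is exactly the point that needs care.
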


\begin{proof}
We treat the case $\delta=\frac12$ only. The case $\delta=0$ works along the same lines but is simpler. Firstly, $[\rho^{\frac12}(a_m),\rho^{\frac12}(b_n)]=0$ is clear as only positive modes appear. Next, inserting the definition of $\xi^{\frac12}$ gives
\be
[\xi^{\frac12}(a_m),\xi^{\frac12}(b_n)]
~=~ (a,b)
x^{m+n+1} C_{m,n} 
\quad , \quad C_{m,n} = \sum_{k,l=0}^\infty {\frac12 \choose k}{\frac12 \choose l} \, (m+k) \, \delta_{m+n+k+l,0} \ .
\ee
To evaluate such sums, it is best to rewrite them as residues. In the present case, 
\be
	C_{m,n} = \mathrm{Res}_0\Big( x^n (1+x)^{\frac12}
	\,\tfrac{d}{dx}\big\{ x^m (1+x)^{\frac12} \big\} \Big) 
	= m \delta_{m+n,0} + (m+\tfrac12) \delta_{m+n+1,0} \ .
\ee
Thus $[\xi^{\frac12}(a_m),\xi^{\frac12}(b_n)] = \xi^{\frac12}([a_m,b_n)]$, as required.
For  $\lambda^\delta$ the corresponding calculation is
$[\lambda^{\frac12}(a_m),\lambda^{\frac12}(b_n)]
= (a,b)
(-x)^{m+n+1} D_{m,n}$,
where
\begin{align}
	D_{m,n} &= \sum_{k,l=0}^\infty {m \choose k}{n \choose l} \, (m+\tfrac12-k) \, \delta_{m+n+1-k-l,0} 
	\nonumber\\
	&= -\mathrm{Res}_0\Big( 
	x^{-n-\frac12} (1+x)^{n} \,
	\tfrac{d}{dx}\big\{ x^{-m-\frac12} (1+x)^{m} \big\} \Big) 
	\nonumber\\
	&= 
	(m+\tfrac12) \, \mathrm{Res}_0\big( x^{-m-n-2}(1+x)^{m+n-1} \big)
	\,+\, \tfrac12 \,\mathrm{Res}_0\big( x^{-m-n-1}(1+x)^{m+n-1} \big)
	\nonumber\\
	&= 
	 -m \,\delta_{m+n,0} + (m+\tfrac12)\, \delta_{m+n+1,0} \ .
	 \label{eq:extend-aux3}
\end{align}
To evaluate the residues in the last step, one can proceed by distinguishing cases depending on whether the corresponding contour integral is more easily evaluated around $0$ or $\infty$. Inserting \eqref{eq:extend-aux3} into $(a,b)
(-x)^{m+n+1} D_{m,n}$ gives the required answer.
\end{proof}

Define the actions $N_x^\delta(a_m)$ and $M_x^\delta(a_m)$ on $\Hom(\widehat A \otimes \widehat B , \overline{\widehat C})$ by setting, for an element $F$ in the Hom-space,
\be
N_x^\delta(a_m)(F) = F\circ(\xi^\delta(a_m) \otimes \id)
\quad , \quad
M_x^\delta(a_m)(F) = \lambda^\delta(a_m)\circ F + F \circ (\id \otimes \rho^\delta(a_m)) \ .
\ee
Then \eqref{eq:extension-condition-aux1} simplifies further to $N_x^\delta(a_m)(\widehat V(x)) = M_x^\delta(a_m)(\widehat V(x))$. Below, it will be important that Lemma \ref{lem:extension-aux1} implies
\be\label{eq:extension-condition-aux2}
	\big[ N_x^\delta(a_m) , N_x^\delta(b_m) \big] ~=~ N_x^\delta\big([a_m,b_n]\big) 
	\quad , \quad
	\big[ M_x^\delta(a_m) , M_x^\delta(b_m) \big] ~=~ M_x^\delta\big([a_m,b_n]\big) \ ,
\ee
where the brackets are those of $\h^\delta$. 

We now prove existence of $\widehat V(x)$ in three steps.

\medskip\noindent
{\em Step 1: Fixing $\widehat V(x)$.} Pick an ordered basis $\{\alpha^i\}_i$ of $\h$. Use  \eqref{eq:extension-condition-aux1} to define $\widehat V(x)$ on the corresponding ``PBW-like'' basis of $\widehat A$ obtained via $\xi^\delta$, that is, on vectors of the form $\xi^\delta(\alpha^{i_1}_{-m_1}) \cdots \xi^\delta(\alpha^{i_n}_{-m_n}) a$, where $a \in A$, $m_1 \ge \cdots \ge m_n \ge 1$ and $i_k>i_{k+1}$ in case $m_k = m_{k+1}$. 
Indeed, for $\delta=0$, this is just the PBW-basis itself, while for $\delta=\frac12$ one can see by induction on the total mode number that this is again a basis.
Basis-independence will follow from the already-established uniqueness, once we have proved that the so-defined $\widehat V(x)$ satisfies \eqref{eq:extension-condition-aux1} for all $a\in \h$ and $m \in \Zb$. 

\medskip\noindent
{\em Step 2: The ground state condition.} For $a \in \h$, $u \in A$ and $m>0$ we have $a_m u=0$. To establish \eqref{eq:extension-condition-aux1} in this case, we need to show that
\be
0=
\sum_{k=0}^m {m \choose k} (-x)^k \, a_{m+\delta-k} V(x)
\,-\,
\sum_{k=0}^m {m \choose k} (-x)^{k} \, V(x)(\id \otimes a_{m+\delta-k})
\ee
holds on $A \ot \widehat{B}$.
Here, we used that for $m \ge 0$, the sum truncates,  we replaced $k \leadsto m-k$ in the second sum, and we applied the symmetry of the binomial coefficient. Applying \eqref{eq:mode-past-untwisted} reduces the above equation to \be
	0 = \sum_{k=0}^m {m \choose k} (-1)^k \,V(x) (x^{m+\delta} a \otimes \id) \ ,
\ee
which evidently holds.

\medskip\noindent
{\em Step 3: Condition \eqref{eq:extension-condition-aux1} in general.} We proceed by induction on the total number $N$ of modes in the argument from $\widehat{A}$. 

\medskip\noindent
$N=0$: In this case, $\widehat V(x)$ is evaluated on a ground state $u \in A$. For $m \le 0$, condition \eqref{eq:extension-condition-aux1} holds by definition of $\widehat V(x)$, and for $m>0$ the condition was established in step 2.

\medskip\noindent
$N \leadsto N+1$: Let $\hat u \in \widehat A$ be a basis element in the basis used to define $\widehat V(x)$. We consider the situation $\widehat V(x) (\xi^\delta(\alpha^i_{m}) \hat u) \otimes \hat v$, where $\hat u = \xi^\delta(\alpha^j_{-n}) \hat u'$ for some $j,n$ and $ \hat u'$ of grade $N-n$, and $\hat v \in \widehat B$ arbitrary. 
\begin{itemize}
\item
For $m<-n$, or for $m=-n$ and $i>j$, condition \eqref{eq:extension-condition-aux1} holds by construction, as $\xi^\delta(\alpha^i_{m}) \xi^\delta(\alpha^j_{-n}) \hat u'$ is part of the defining basis.
\item
For $m=-n$ and $i<j$ we need to move $\xi^\delta(\alpha^i_m)$ to its appropriate position to obtain a defining basis element. Let us illustrate the argument in the case where only one exchange is necessary, i.e.\ when  $\xi^\delta(\alpha^j_{-n}) \xi^\delta(\alpha^i_m) \hat u'$ is an element of the defining basis. The general case is an iteration of this argument.
Writing $\widehat V$ instead of $\widehat V(x)$ to reduce the number of brackets, we get
\begin{align}
\big(N_x^\delta(\alpha^i_m)\widehat V\big)(\hat u \otimes \hat v) 
&\overset{(1)}= 
\big(N_x^\delta(\alpha^i_m)N_x^\delta(\alpha^j_{-n})\widehat V\big)(\hat u' \otimes \hat v) 
\nonumber\\
&\overset{(2)}= 
-\big(N_x^\delta(\alpha^j_{-n})N_x^\delta(\alpha^i_m)\widehat V\big)(\hat u' \otimes \hat v) 
\nonumber\\
&\overset{(3)}= 
-\big(M_x^\delta(\alpha^j_{-n})M_x^\delta(\alpha^i_m)\widehat V\big)(\hat u' \otimes \hat v) 
\nonumber\\
&\overset{(4)}= 
\big(M_x^\delta(\alpha^i_m)M_x^\delta(\alpha^j_{-n})\widehat V\big)(\hat u' \otimes \hat v) 
\nonumber\\
&\overset{(5)}= 
\big(M_x^\delta(\alpha^i_m)N_x^\delta(\alpha^j_{-n})\widehat V\big)(\hat u' \otimes \hat v) 
\nonumber\\
&\overset{(6)}= 
\big(M_x^\delta(\alpha^i_m)\widehat V\big)(\hat u \otimes \hat v) \ .
\end{align}
In step 1, $\hat u = \xi^\delta(\alpha^j_{-n}) \hat u'$ has been inserted and step 2 uses \eqref{eq:extension-condition-aux2}. In step 3 we use the assumption that $\xi^\delta(\alpha^j_{-n}) \xi^\delta(\alpha^i_m) \hat u'$ is an element of the defining basis, and so \eqref{eq:extension-condition-aux1} holds by definition. Step 4 is once more \eqref{eq:extension-condition-aux2}, step 5 follows as $\xi^\delta(\alpha^j_{-n})  \hat u'$ is equally an element of the defining basis, and finally step 6 amounts to inserting the definition of $\hat u$.

\item For $m>-n$, the argument is similar to the one above: 
Rewrite 
\be
\xi^\delta(\alpha^i_{m}) \hat u 
= 
-\xi^\delta(\alpha^j_{-n})\xi^\delta(\alpha^i_{m})  \hat u'
+[\xi^\delta(\alpha^i_{m}),\xi^\delta(\alpha^j_{-n})] \hat u' \ .
\ee
Since $\hat u'$ and $\xi^\delta(\alpha^i_{m})  \hat u'$ have grade $\le N$, we can use the induction hypothesis to carry out a calculation like the one above to establish \eqref{eq:extension-condition-aux1}  -- we omit the details.
\end{itemize}

\section{The vertex operator corresponding to $H$}\label{sec:OPEs}

This appendix is complementary to the discussion in Section \ref{sec:Vd-inside-lattice}. There, an explicit embedding of the even part of the symplectic fermion VOA $\VOA(\h)_\mathrm{ev}$ into the lattice VOA for the even self-dual lattice $D^+_{d/2}$ was constructed. Here, conversely, we identify a rank-$\frac d2$ Heisenberg sub-VOA in each extension of $\VOA(\h)_\mathrm{ev}$ as classified in Theorem \ref{thm:classify-extensions}. As that theorem relies on the assumptions in Section \ref{sec:two-assumptions}, the calculations in this appendix lend support to these assumptions (only Remark \ref{rem:Heis-sub-VOA} below depends on these assumptions).

\medskip

Recall that $H = L \oplus T$ where $L$ is generated by $S(\f) \subset S^{\frac d2}(\h)$ (Lemma \ref{lem:kernel-is-Lag}), and that the multiplication on $H$ is determined by $m \in S(\f)$ and $1_H \in S^d$ (Section \ref{sec:ex-unique}).
Pick a complement $\f^*$ of $\f$ such that $\h = \f^* \oplus \f$, and such that the symplectic pairing on $\h$ satisfies $(\varphi,u) = \varphi(u)$ for all $\varphi \in \f^*$ and $u \in \f$. 
We obtain an isomorphism $\widetilde{(-)} : \f \to L \cap S^{d-1}$ by requiring that for all $\varphi \in \f^*$ and $a \in \f$,
\be
	\varphi \,\widetilde a = (\varphi,a) \, 1_H \ .
\ee
Denote the ground state of $\widehat T$ by $\tau$ (i.e.\ $\tau=1\in T$). 
Pick a Lagrangian subspace $\f \subset \h$ and set $H = H(\f)$. Denote by 
\be
	V := V(\mu,-) : \Rb_{>0} \times (\widehat H \otimes \widehat H) \to \overline{\widehat H}
\ee 
the vertex operator determined by multiplication map $\mu : H * H \to H$ as in \eqref{eq:SF-tensor-via-rep-obj}. 
From \eqref{eq:Pgs-V} and the definition of $\mu$ in Section \ref{sec:ex-unique} we know that, for $u,v \in L$ 
\begin{align}
V(x) (u \otimes v) &= u \Lmul v + O(x)\ ,
\nonumber\\
V(x) (u \otimes \tau) &= 1_H^*(u) \, \tau  + O(x^{\frac12})\ ,
\nonumber\\
V(x) (\tau \otimes v) &= 1_H^*(v) \, \tau  + O(x^{\frac12})\ ,
\nonumber\\
V(x) (\tau \otimes \tau) &= x^{\frac d8} \,( m  + O(x^{\frac12}) ) \ .
\label{eq:OPEs-leading-order}
\end{align}
Here we used that $C$ acts as zero on $L \otimes L$ and $\hat C$ acts as zero on $L$ (see Section \ref{sec:ex-unique}).
We can now use Definition \ref{def:vertex-op} and Proposition \ref{prop:V-on-descendents} to compute the leading part of $V(x)$. We are particularly interested in the case that the arguments are ground states and untwisted states of weight one.

\begin{lemma}\label{lem:V-expansions}
We have, for all $a,b\in \h$ and $u,v \in L$
\begin{align}
V(x) \big( (a_{-1} u) \otimes v \big)
=~& x^{-1} \, (-1)^{|u|} \, u \Lmul (a.v) ~+~ \text{reg.}
\nonumber \\
V(x) \big( u \otimes (b_{-1} v) \big)
=~& x^{-1} \, (-1)^{|u|+1} \, (b.u) \Lmul v ~+~ \text{reg.}
\nonumber \\
V(x)  \big( (a_{-1} u) \otimes (b_{-1} v) \big)
=~& 
x^{-2}\, \big\{  (a,b) (-1)^{|u|} u \Lmul v \,+\, (b.u) \Lmul (a.v) \big\}
\nonumber \\
& + x^{-1} \Big\{ -(-1)^{|u|} a_{-1}((b.u) \Lmul v) - b_{-1} (u \Lmul (a.v))
\nonumber \\
&
\hspace{4em}
+ \sum_{j=1}^d \beta^j_{-1} ((\alpha^j b.u) \Lmul (a.v)) 
\nonumber \\
&
\hspace{4em}
+ (-1)^{|u|} (a,b) \sum_{j=1}^d \beta^j_{-1} ((\alpha^j.u) \Lmul v) \Big\}
 ~+~ \text{reg.}
\nonumber \\
V(x) \big(  (a_{-1} u) \otimes \tau \big)
=~& 
-\tfrac12 \,x^{-1} \,1_H^*(a.u) \, \tau 
\nonumber\\
& +  x^{-\frac12} \Big\{1_H^*(u) \, a_{-\frac12}\tau
- \sum_{j=1}^d 1_H^*(\alpha^ja.u) \, \beta^j_{-\frac12} \tau \Big\}
~+~ \text{reg.}
\nonumber \\
V(x) \big(  \tau \otimes (b_{-1} v) \big)
=~& 
\tfrac12 \, x^{-1} \, 1_H^*(b.v) \, \tau 
\nonumber\\
& - i x^{-\frac12} \Big\{1_H^*(v) \, b_{-\frac12}\tau- \sum_{j=1}^d 1_H^*(\alpha^jb.v) \, \beta^j_{-\frac12} \tau \Big\}
~+~ \text{reg.}
\label{eq:V(x)-leading-terms}
\end{align}
where ``reg.'' stands for terms multiplied by $x^k$ with $k \ge 0$.
\end{lemma}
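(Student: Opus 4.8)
The plan is to compute each expansion in \eqref{eq:V(x)-leading-terms} by reducing the matrix elements of $V(x)=V(\mu,-)$ on descendant states to its values on ground states, where the leading behaviour is already recorded in \eqref{eq:OPEs-leading-order}. Throughout we use the structural facts from Section \ref{sec:ex-unique} that $\hat C$ acts as zero on $L$ and $C$ acts as zero on $L\otimes L$; as noted after \eqref{eq:Pgs-V} this makes the ground-state value of $V(x)(u\otimes v)$ exactly $u\Lmul v$, with no further ground-state corrections, and similarly simplifies the other sectors. The only subleading data we will need are the descendant-valued outputs, and these are fixed by the mode-transfer relations below starting from the exact leading terms.

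The reduction is organised by where the descendant sits. A descendant $a_{-1}u$ in the first slot (lines $1$, $3$ and $4$) is removed using Proposition \ref{prop:V-on-descendents}: equation \eqref{eq:mode-at-x_untw0} when the second argument is untwisted ($B=L$, lines $1$ and $3$) and equation \eqref{eq:mode-at-x_tw0} when it is twisted ($B=T$, line $4$). For $m=-1$ the binomial coefficient is $\binom{-1}{k}=(-1)^k$ and $e^{\pi i(-1-k)}(-1)^k=-1$, so \eqref{eq:mode-at-x_untw0} collapses to $V(x)(a_{-1}\otimes\id)=\sum_{k\ge0}x^k a_{-1-k}V(x)+\sum_{k\ge0}x^{-1-k}V(x)(\id\otimes a_k)$, which is the only form we need. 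A descendant in the second slot (lines $2$ and $5$) is instead removed using the defining property (iii) of Definition \ref{def:vertex-op}: \eqref{eq:mode-past-untwisted} for $A=L$ and \eqref{eq:mode-past-twisted} for $A=T$; in the latter one isolates $V(x)(\id\otimes b_{-1})$ by taking $m=-1$ and keeping the $k=0$ and $k=1$ terms of the series, the rest annihilating the ground state $v$. Line $3$ is handled by applying the first-slot rule first and then reducing the resulting $V(x)(\id\otimes a_k)(u\otimes b_{-1}v)$ terms by the commutation relation \eqref{eq:SF-liebracket}: $a_1b_{-1}v=(a,b)v$ produces the $x^{-2}(a,b)$ contribution, $a_0b_{-1}v=-b_{-1}(a.v)$ feeds back into the second-slot rule to produce the remaining $x^{-2}$ and $x^{-1}$ terms, and $a_kb_{-1}v=0$ for $k\ge2$.

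With these reductions in place, the singular coefficients are extracted by (i) using \eqref{eq:SF-liebracket} and $a_nv=0$ for $n>0$ to push positive modes onto ground states and truncate the infinite sums, and (ii) carrying the descendant-valued subleading terms of the ground-state OPEs, which are themselves reconstructed from the exact leading values of \eqref{eq:OPEs-leading-order} by one further application of the relevant mode-transfer relation. This second mechanism is exactly what generates the sums $\sum_{j}\beta^j_{-1}(\cdots)$ in line $3$ and $\sum_{j}\beta^j_{-\frac12}(\cdots)$ in lines $4$ and $5$: the creation mode $\beta^j_{-1}$ (resp.\ $\beta^j_{-\frac12}$) in the output is dual to an $\alpha^j$ mode that has to be moved across $V(x)$, and the contraction $\sum_j\beta^j\otimes\alpha^j=\Omega$ assembles into the displayed expressions.

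The main obstacle is purely one of bookkeeping rather than of idea. First, every time an odd mode $a_k$ is passed across the first tensor factor $u$ one picks up a Koszul sign $(-1)^{|u|}$; tracking these consistently is what produces the parity prefactors $(-1)^{|u|}$ and $(-1)^{|u|+1}$ throughout \eqref{eq:V(x)-leading-terms}. Second, the twisted relations \eqref{eq:mode-at-x_tw0} and \eqref{eq:mode-past-twisted} carry an explicit factor $i$, half-integer powers $x^{\frac12-k}$ with a fixed branch, and $\binom{\frac12}{k}$ series, so that lines $4$ and $5$ acquire the prefactors $\tfrac12$, $-\tfrac12$ and $-i$; these must be combined with the $x^{d/8}$ prefactor of the sector-$11$ OPE in \eqref{eq:OPEs-leading-order} when both slots are twisted. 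Finally, one must check that only finitely many terms of each binomial series contribute to a given negative power of $x$, which follows because $V(x)$ is bounded below and positive modes annihilate ground states. As a consistency check, the two distinct $x^{-2}$ contributions in line $3$ (namely $(a,b)(-1)^{|u|}u\Lmul v$ from $a_1b_{-1}v=(a,b)v$, and $(b.u)\Lmul(a.v)$ from $a_0b_{-1}v=-b_{-1}(a.v)$) arise from genuinely different terms of the reduction and recombine into the stated coefficient.
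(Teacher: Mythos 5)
Your proposal is correct and follows essentially the same route as the paper: the subleading, descendant-valued corrections to the ground-state OPEs are fixed by one application of the mode-transfer relations (the paper does this via an ansatz plus acting with $\alpha^j_1$, resp.\ $\alpha^j_{1/2}$), second-slot descendants are stripped with Definition \ref{def:vertex-op}(iii), first-slot descendants with Proposition \ref{prop:V-on-descendents} at $m=-1$, and the mixed case reduces to the commutators $a_0b_{-1}v$, $a_1b_{-1}v$ exactly as in the paper (which merely processes the second slot before the first). One cosmetic remark: your comment about combining with the $x^{d/8}$ prefactor of the sector-$11$ OPE is vacuous here, since none of the five expansions in the lemma has both arguments twisted.
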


\begin{proof}
Pick a basis $\alpha^i$, $i=1,\dots,d$ of $\h$ and let $\beta^i$ be the basis dual with respect to $(-,-)$ (and not $(-,-)_{\SF}$, cf.\ \eqref{eq:two-pairings}), i.e.\ $(\alpha^i,\beta^j)=\delta_{ij}$.
We will need the first three OPEs in \eqref{eq:OPEs-leading-order} to subleading order. For $V(x) (u \otimes v)$ we make the general ansatz
\be
	V(x) (u \otimes v) = u \Lmul v + x \, \sum_{i=1}^d \beta^i_{-1} w_i + O(x^2) \ ,
\ee
where $w_i \in L$, and then we act with $\alpha^j_1$ on both sides. Using \eqref{eq:mode-past-untwisted}, the left hand side gives
$\alpha^j_1 \, V(x)  (u \otimes v)
	= x \, V(x) (\alpha^j.u \otimes v) = x \,  (\alpha^j.u) \Lmul v + O(x^2)$.
Using \eqref{eq:SF-liebracket} and that $u \Lmul v$
is a ground state, the right hand side simplifies as
$x \, \sum_i \alpha^j_1\beta^i_{-1} w_i + O(x^2) = x\, w_j + O(x^2)$. Comparing coefficients of $x$ we conclude
\be\label{eq:V(x)uxv-2nd-order}
	V(x) (u \otimes v) = u \Lmul v + x \, \sum_{j=1}^d \beta^j_{-1} \big( (\alpha^j.u) \Lmul v \big) + O(x^2) \ .
\ee
A similar calculation for the other two cases yields
\begin{align}
V(x)(u \otimes \tau) &= 
1_H^*(u) \, \tau
+
x^{\frac12}\, 2 \sum_{j=1}^d 1_H^*(\alpha^j.u) \, \beta^j_{-\frac12} \tau + O(x) \ .
\nonumber\\
V(x)(\tau \otimes v) &= 
1_H^*(v) \, \tau
+
x^{\frac12}\,2i \sum_{j=1}^d  1_H^*(\alpha^j.v) \, \beta^j_{-\frac12} \tau + O(x) \ .
\end{align}
Next we compute $V(x) \big( u \otimes (b_{-1} v) \big)$, which we will need to constant order.
We use \eqref{eq:mode-past-untwisted} in the form
$b_{-1} V(x)(u \otimes v) = V(x)\big( x^{-1} (b.u) \otimes v + (-1)^{|u|} u \otimes (b_{-1}v) \big)$ and insert the expansion computed in \eqref{eq:V(x)uxv-2nd-order}. This gives
\begin{align}
	V(x) \big( u \otimes (b_{-1} v) \big)
	=~&
	x^{-1} \, (-1)^{|u|+1} (b.u) \Lmul v
	\nonumber \\
	&+
	(-1)^{|u|} b_{-1}(u \Lmul v)
	-
	(-1)^{|u|} \sum_{j=1}^d \beta^j_{-1} \big( (\alpha^jb.u) \Lmul v \big)
	+ O(x) 
\label{eq:V(x)uxbv-1st-order}
\end{align}
and thus recovers the corresponding expression in \eqref{eq:V(x)-leading-terms}.
A similar calculation recovers the expansion of $V(x) \big(  \tau \otimes (b_{-1} v) \big)$ stated in \eqref{eq:V(x)-leading-terms}.

Finally we consider the expansions involving $a_{-1}u$ in the first argument. Let us give the details for $V(x) \big( (a_{-1} u) \otimes v \big)$ and $V(x) \big( (a_{-1} u) \otimes (b_{-1}v) \big)$. We use \eqref{eq:mode-at-x_untw0} for $m=-1$ acting on $u \otimes \psi$, where $\psi$ is either $v$ or $b_{-1}v$:
\begin{align}
V(x)\big( (a_{-1}u) \otimes \psi \big)
=~&
a_{-1} V(x)(u \otimes \psi) + \dots
\nonumber \\
&+ (-1)^{|u|} x^{-1} V(x)\big(u \otimes (a_0\psi)\big)
+ (-1)^{|u|} x^{-2} V(x)\big(u \otimes (a_1\psi)\big)
 \ ,
\end{align}
where ``$\dots$'' stands for terms in the first sum on the left hand side of \eqref{eq:mode-at-x_untw0} with $x^k$ for $k>0$. Substituting \eqref{eq:V(x)uxv-2nd-order} and \eqref{eq:V(x)uxbv-1st-order} produces the expansions in \eqref{eq:V(x)-leading-terms}. The expansion of $V(x) \big(  (a_{-1} u) \otimes \tau \big)$ is obtained analogously from \eqref{eq:mode-at-x_tw0}.
\end{proof}

The above OPEs contain square roots of $x$, but they have single valued extensions to $\Cb^\times$ if one restricts to the even subspace.

\medskip

Next we will look for OPEs in $(\widehat H)_{\mathrm{ev}}$ that match those of a rank-$\frac d2$ Heisenberg VOA.
Pick a basis $\{f^i\,|\,i=1,\dots,\tfrac d2\}$ of $\f$ and let $\{f^{i*}\}$ be the dual basis of $\f^*$. Then $(f^{i*},f^j) = \delta_{i,j} = - (f^j,f^{i*})$. Define, for $i=1,\dots,\tfrac d2$,
\be
	H^i := f^{i*}_{-1}\widetilde{f^{i}} \in (\widehat L)_\mathrm{ev}
	\quad , \quad
	H^i(x) \,:=\, V(x) \circ ( H^i \otimes \id) 
	~:~
	(\widehat H)_{\mathrm{ev}} \to \overline{(\widehat H)}_{\mathrm{ev}} 
\ee
From Lemma \ref{lem:V-expansions} we get
\be\label{eq:Heis-OPE-in-Hev}
	H^i(x)H^j
	~=~
	\delta_{i,j} \, x^{-2} \,  1_H ~+~ \text{reg.} \ .
\ee

\begin{remark}\label{rem:Heis-sub-VOA}
If we assume Conjecture \ref{conj:braided-equiv}, by Theorem \ref{thm:HKL}, $(\widehat H)_{\mathrm{ev}}$ is a VOA. The OPE \eqref{eq:Heis-OPE-in-Hev} then shows that
$(\widehat H)_\mathrm{ev}$ (in fact $(\widehat L)_\mathrm{ev}$)
contains a $\frac d2$-dimensional Heisenberg VOA. 
\end{remark}

We can expand $H^i(x)$ into modes as $H^i(x) = \sum_{m \in \Zb} H^i_m x^{-m-1}$. To see how these modes act on $(\widehat L)_\mathrm{ev}$, we need some more notation. 
For $A,B,C \in \SF_0$ and $f \in \SF(A * B,C)$ let $Q_f : A \otimes \widehat B \to \overline{\widehat{C}}$ be defined by the properties \cite[Lem.\,3.5]{Runkel:2012cf}
\begin{itemize}
\item for all $a\in A$, $b \in B$:
	$Q_f(a \otimes b) = f(a \otimes b)\ $,
\item for all $h \in \h$, $m \in \Zb$:
	$h_m \,Q_f = \delta_{m,0}\, Q_f\circ (h \otimes \id) + Q_f \circ (\id \otimes h_m)\ $.
\end{itemize}
We will use this in the case $f = \mu|_{L \otimes L} : L \otimes L \to L$. 

Next, for all $h,k \in \h$ and $m,n \in \Zb$ let the normal ordered product be
\be
	:h_mk_n: ~=~ \begin{cases} h_m k_n &; m \le n \\
	-k_n h_m &; m>n
	\end{cases}\quad .
\ee
Note that by \eqref{eq:SF-liebracket}, the two alternatives can be different only for $m=-n$.

\begin{lemma}\label{lem:mode-action-on-Lhat}
Let $h \in \h$, $f \in \f$ and $v \in (\widehat L)_\mathrm{ev}$. Then
\be 
	V(x) (h_{-1}\widetilde{f} \otimes v) = \sum_{k \in \Zb} x^{-k-1} X_k v \ ,
\ee
where 
\be
	X_k v = - Q_\mu(\widetilde f \otimes h_k v) - \sum_{m \in \Zb \setminus \{0\}} \tfrac{1}{m} :h_{k-m}f_m:\,v \quad .
\ee
\end{lemma}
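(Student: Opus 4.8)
The plan is to reduce the computation to two ingredients: the extension rule \eqref{eq:mode-at-x_untw0} of Proposition \ref{prop:V-on-descendents}, which moves the creation mode $h_{-1}$ out of the first slot, and an explicit closed form for $V(x)(\widetilde f\ot-)$ with the single ground state $\widetilde f$ in the first slot. The first step I would record is the elementary but crucial fact that $a.\widetilde f=(a,f)\,1_H$ for every $a\in\h$: since $\f$ annihilates $L$ by Lemma \ref{lem:kernel-is-Lag} we have $a.\widetilde f=0$ for $a\in\f$ (and $(a,f)=0$ as $\f$ is isotropic), while for $a\in\f^*$ this is the defining property of $\widetilde{(-)}$.

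With this, I would establish the key identity
\[
V(x)(\widetilde f\ot w)\;=\;Q_\mu(\widetilde f\ot w)\;-\;\sum_{l\neq0}\tfrac{x^{-l}}{l}\,f_l\,w
\qquad(w\in\widehat L).
\]
Using $V(x)(1_H\ot-)=\id=Q_\mu(1_H\ot-)$ (both commute with all modes and act as the identity on ground states), the mode rule \eqref{eq:mode-past-untwisted} gives the \emph{anti}commutator $\{a_m,V(x)(\widetilde f\ot-)\}=x^m(a,f)\,\id$, the Koszul sign of the odd element $\widetilde f$ turning commutators into anticommutators, while the defining relation of $Q_\mu$ gives $\{a_m,Q_\mu(\widetilde f\ot-)\}=\delta_{m,0}(a,f)\,\id$. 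Hence $U(x):=V(x)(\widetilde f\ot-)-Q_\mu(\widetilde f\ot-)$ obeys $\{a_m,U(x)\}=(x^m-\delta_{m,0})(a,f)\,\id$, and a direct check using \eqref{eq:SF-liebracket} shows $-\sum_{l\neq0}\tfrac{x^{-l}}{l}f_l$ has exactly the same anticommutators. The residual odd operator then anticommutes with every mode, so it is a module endomorphism carrying ground states to ground states; evaluating on $b\in L$ and using that $\Omega$ squares to zero on $\widetilde f\ot b$ (because $\beta^i\widetilde f\in S^d$ and $\h$ annihilates $S^d$) together with $f.b=0$, one finds it vanishes.

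Next I would apply \eqref{eq:mode-at-x_untw0} with $a=h$, $m=-1$; using $\binom{-1}{k}=(-1)^k$, $e^{\pi i(-1-k)}=(-1)^{k+1}$ and the Koszul sign produced when $\id\ot h_k$ passes the odd $\widetilde f$, this yields
\[
V(x)(h_{-1}\widetilde f\ot v)=\sum_{k\ge0}x^{k}\,h_{-1-k}\,V(x)(\widetilde f\ot v)\;-\;\sum_{k\ge0}x^{-1-k}\,V(x)(\widetilde f\ot h_k v).
\]
(The sign here is fixed independently by consistency with the leading term $x^{-1}(-1)^{|\widetilde f|}\widetilde f\Lmul(h.v)$ of Lemma \ref{lem:V-expansions}.) Substituting the key identity and reading off the coefficient of $x^{-k-1}$ gives $X_k$. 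The two $Q_\mu$-contributions combine, for every $k$, into $-Q_\mu(\widetilde f\ot h_k v)$: for $k\ge0$ directly from the second sum, and for $k\le-1$ from the term $h_kQ_\mu(\widetilde f\ot v)$ in the first sum after invoking $h_kQ_\mu(\widetilde f\ot v)=\delta_{k,0}(h,f)v-Q_\mu(\widetilde f\ot h_k v)$, itself a consequence of the $Q_\mu$-relation. The remaining $f_l$-terms, reindexed by $m:=l$, contribute $-\sum_{m\ge k+1}\tfrac1m h_{k-m}f_m v$ from the first sum and $+\sum_{m\le k,\,m\neq0}\tfrac1m f_m h_{k-m}v$ from the second; comparing with the definition of $:h_{k-m}f_m:$ assembles them into $-\sum_{m\neq0}\tfrac1m:h_{k-m}f_m:v$, since for $k\neq0$ the two modes anticommute (their bracket is proportional to $\delta_{k,0}$ by \eqref{eq:SF-liebracket}) so the split point is immaterial up to the normal-ordering sign, while for $k=0$ the split $m\ge1$ versus $m\le-1$ is exactly the standard prescription. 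All sums truncate on a fixed $v$.

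The main obstacle is the key identity: the vanishing of the residual operator rests on the structural input that an odd operator anticommuting with all modes is determined by its restriction to the ground states $L$, together with the $\Omega^2=0$ and $f.L=0$ computations that kill that restriction. The secondary difficulty is purely bookkeeping --- keeping the factors $e^{\pi i(m-k)}$, the Koszul signs attached to the odd $\widetilde f$, and the normal-ordering sign convention mutually consistent throughout the reindexing --- which I would pin down, as above, against the known leading orders in Lemma \ref{lem:V-expansions}.
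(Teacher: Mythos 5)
Your proof is correct and follows the same two-step skeleton as the paper's: first the extension rule \eqref{eq:mode-at-x_untw0} with $m=-1$ (your displayed intermediate formula is exactly the paper's, Koszul sign included), then a closed form for $V(x)(\widetilde f\ot w)$, which after the substitution $l=-m$ is identical to the paper's $V(x)(\widetilde f \ot w)=\sum_{m\neq0}\tfrac{x^m}{m}f_{-m}w+Q_\mu(\widetilde f\ot w)$, and finally the reassembly of coefficients. The one genuine difference is how the closed form is obtained: the paper simply quotes the normal-ordered-exponential expression for $V(x)$ from \cite[Lem.\,3.6]{Runkel:2012cf} and observes that only the first two terms of each exponential contribute (since $f_0$ acts as zero on $\widehat L$), whereas you rederive it from first principles by matching the anticommutators of all modes with $V(x)(\widetilde f\ot-)$, with $Q_\mu(\widetilde f\ot-)$, and with the explicit mode sum, and then killing the residual operator by showing it anticommutes with every mode and vanishes on ground states (using $a.\widetilde f=(a,f)\,1_H$, the vanishing of $\Omega^2$ on $\widetilde f\ot b$, and $f.L=0$). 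Your route is longer but self-contained within the definitions given in this paper (Definition \ref{def:vertex-op} and the defining properties of $Q_\mu$); the paper's is shorter but leans on the external reference. Your bookkeeping in the final step --- in particular the observation that the normal-ordering split point is immaterial except at $k=0$ because the bracket \eqref{eq:SF-liebracket} is supported on $m+n=0$ --- is correct and supplies detail the paper compresses into ``combining the two equations gives the statement''.
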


\begin{proof}
By Proposition \ref{prop:V-on-descendents},
\be\label{eq:mode-action-aux1}
	V(x)(h_{-1}\widetilde{f} \otimes v)
	= \sum_{l=0}^\infty x^l \,h_{-l-1} V(x) (\widetilde f \otimes v)
	- \sum_{l=0}^\infty x^{-l-1}\, V(x) (\widetilde f \otimes h_l v) \ .
\ee
Next we substitute the expression for $V(x)$ in terms of normal ordered exponentials given in \cite[Lem.\,3.6]{Runkel:2012cf} to find, for all $w \in \widehat L$,
\be\label{eq:mode-action-aux2}
	V(x)(\widetilde f \otimes w)
	= \sum_{m \neq 0} \tfrac{x^m}m f_{-m}w + Q_\mu(\widetilde f \otimes w) \ .
\ee
In computing this, we used that only the first two terms in the expansion of each exponential contribute, and that $E_0(x) = \id$ as $f_0$ acts as zero on $\widehat L$ (see \cite{Runkel:2012cf} for the definition of $E_0(x)$).

Combining \eqref{eq:mode-action-aux1} and \eqref{eq:mode-action-aux2} gives the statement of the lemma.
\end{proof}

The lemma shows that the modes $H^i_k$ act on $v \in \widehat L$ as
\be\label{eq:Hik-in-terms-of-f}
	H^i_k
	=- Q_\mu(\widetilde{f^i} \otimes f^{i*}_k v) - \sum_{m \in \Zb \setminus \{0\}} \tfrac{1}{m} :f^{i*}_{k-m}f^i_m:\,v \quad .	
\ee

The symplectic fermion stress tensor \eqref{eq:SF-stress-tensor} can be expressed in terms of the modes $H^i_k$ as follows:

\begin{lemma}\label{lem:Vir-el-via-Heis}
We have $L_{-2} 1_H = \frac12 \sum_{i=1}^{d/2} (H^i_{-1} H^i_{-1} - H^i_{-2}) 1_H$. 
\end{lemma}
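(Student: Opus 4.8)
The plan is to evaluate both sides of the identity explicitly as vectors in the grade-two subspace of $\widehat L$ and compare.

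First I would compute the left-hand side. Since $1_H$ lies in the top degree $S^d(\h)$ and is a ground state of $\widehat L$, every non-negative mode kills it: $\alpha^j_n 1_H = 0$ for $n>0$ because positive modes annihilate ground states, and $\alpha^j_0 1_H = \alpha^j\cdot 1_H = 0$ because multiplication by $\alpha^j$ lands in $S^{d+1}(\h)=0$. Substituting this into the Virasoro formula \eqref{eq:virasoro-action} for $m=-2$, $\delta=0$, i.e.\ $L_{-2}=\tfrac12\sum_{k\in\Zb}\sum_j\beta^j_k\alpha^j_{-2-k}$, only the summand $k=-1$ survives: for $k\ge 0$ one has $\alpha^j_{-2-k}1_H=0$, while for $k\ge 1$ the mode $\beta^j_k$ would have to reach $1_H$ and vanish, the relevant bracket being zero by \eqref{eq:SF-liebracket} (it sits at total mode $-2$). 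Hence $L_{-2}1_H = \tfrac12\sum_{j=1}^d\beta^j_{-1}\alpha^j_{-1}1_H$. I would then pick a basis of $\h$ adapted to the polarization $\h=\f^*\oplus\f$, choosing $\f^*$ Lagrangian so that the dual basis is expressed through the $f^i$ and $f^{i*}$; using $(f^i,f^j)=(f^{i*},f^{j*})=0$ together with the anticommutativity $\beta^j_{-1}\alpha^j_{-1}=-\alpha^j_{-1}\beta^j_{-1}$, this collapses to $L_{-2}1_H = -\sum_{i=1}^{d/2} f^{i*}_{-1}f^i_{-1}1_H$.

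For the right-hand side I would use the explicit mode formula \eqref{eq:Hik-in-terms-of-f}, together with three inputs: (a) $f^i\in\f$ annihilates $L$ through its zero mode and kills ground states through positive modes; (b) $f^{i*}\widetilde{f^i}=1_H$ by the defining relation for $\widetilde{(-)}$; and (c) $(f^{i*},f^{i*})=0$. Via the mode property of $Q_\mu$ one first gets $H^i_{-1}1_H = f^{i*}_{-1}\widetilde{f^i}=H^i$ (the expected creation property). A term-by-term evaluation then yields
\be
  H^i_{-2}1_H = f^{i*}_{-2}\widetilde{f^i} + f^{i*}_{-1}f^i_{-1}1_H
  \quad,\quad
  H^i_{-1}H^i_{-1}1_H = f^{i*}_{-2}\widetilde{f^i} - f^{i*}_{-1}f^i_{-1}1_H \ ,
\ee
where for the second I apply $H^i_{-1}$ to $H^i=f^{i*}_{-1}\widetilde{f^i}$: the $Q_\mu$-term drops because $f^{i*}_{-1}f^{i*}_{-1}=0$, the $m=1$ contribution in the normal-ordered sum produces $f^{i*}_{-2}\widetilde{f^i}$ through the bracket $\{f^i_1,f^{i*}_{-1}\}$, and the $m=-1$ contribution produces the two-mode piece after using $f^{i*}_0\big(f^{i*}_{-1}\widetilde{f^i}\big)=-f^{i*}_{-1}1_H$.

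The decisive point is the relative sign between the two lines: in $H^i_{-1}H^i_{-1}1_H$ the relevant normal-ordered piece is $:f^{i*}_0 f^i_{-1}:\,=\,-f^i_{-1}f^{i*}_0$, whereas in $H^i_{-2}1_H$ it is $:f^{i*}_{-1}f^i_{-1}:\,=\,f^{i*}_{-1}f^i_{-1}$, so the two-mode terms come out with opposite signs while the single-mode terms $f^{i*}_{-2}\widetilde{f^i}$ agree. Forming the Feigin–Fuchs combination, the spurious $f^{i*}_{-2}\widetilde{f^i}$ cancels and the two-mode pieces reinforce, giving $(H^i_{-1}H^i_{-1}-H^i_{-2})1_H = -2\, f^{i*}_{-1}f^i_{-1}1_H$; summing over $i$ with the prefactor $\tfrac12$ reproduces exactly the expression for $L_{-2}1_H$ obtained in the first step. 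I expect the main obstacle to be precisely this sign bookkeeping: keeping the super-anticommutators \eqref{eq:SF-liebracket}, the normal-ordering prescription, and the $Q_\mu$ mode rule mutually consistent, so that the $\widetilde{f^i}$-terms cancel and the $f^{i*}_{-1}f^i_{-1}1_H$-terms add rather than the reverse. (As an independent sanity check on the conventions, the same inputs applied to Lemma \ref{lem:V-expansions} reproduce the Heisenberg OPE \eqref{eq:Heis-OPE-in-Hev}, since $f^{i*}.\widetilde{f^i}=1_H$ and $(f^{i*},f^{i*})=0$.)
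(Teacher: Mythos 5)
Your proposal is correct and follows essentially the same route as the paper: compute $L_{-2}1_H$ from \eqref{eq:virasoro-action} in a basis adapted to $\h=\f^*\oplus\f$, and evaluate $H^i_{-1}1_H$, $H^i_{-2}1_H$ and $H^i_{-1}H^i_{-1}1_H$ from \eqref{eq:Hik-in-terms-of-f}; your expressions agree with the paper's after using $f^{i*}_{-1}f^i_{-1}=-f^i_{-1}f^{i*}_{-1}$. The paper's proof is just a terser version of the same computation, so no further comment is needed.
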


\begin{proof}
In the basis consisting of $f^i$ and $f^{*i}$, \eqref{eq:virasoro-action} gives $L_{-2}1_H = \sum_{i=1}^{d/2} f^i_{-1} f^{i*}_{-1} 1_H$. 
From \eqref{eq:Hik-in-terms-of-f} we read off $H_{-1}^i 1_H = f^{i*}_{-1} \widetilde{f^i}$ and $H^i_{-2}1_H = f^{i*}_{-2} \widetilde{f^i} - f^{i}_{-1}f^{i*}_{-1}1_H$. Again using \eqref{eq:Hik-in-terms-of-f}, one computes $H_{-1}^i H_{-1}^i 1_H = f^{i*}_{-2} \widetilde{f^i} + f^{i}_{-1}f^{i*}_{-1}1_H$. This shows the statement of the lemma.
\end{proof}

The expression for the stress tensor obtained in the above lemma agrees with that in Theorem \ref{thm:symp-ferm-in-lattice}, as it should. 

\medskip

One can prove a result similar to Lemma \ref{lem:mode-action-on-Lhat} for the action of $H^i_k$ on $\widehat T$. This allows one to determine the spectrum of the zero modes $H^i_0$ on $(\widehat H)_\mathrm{ev}$. The result is the lattice $D^+_{d/2}$, as expected by Corollary \ref{cor:symp-even-in-lattice} -- we omit the details.

\newcommand\arxiv[2]      {\href{http://arXiv.org/abs/#1}{#2}}
\newcommand\doi[2]        {\href{http://dx.doi.org/#1}{#2}}
\newcommand\httpurl[2]    {\href{http://#1}{#2}}

\end{document}